\newtheorem{theorem}{Theorem}[section]
\newtheorem{corollary}{Corollary}[theorem]
\newtheorem{lemma}[theorem]{Lemma}
\newtheorem{proposition}[theorem]{Proposition}
\newtheorem{conjecture}[theorem]{Conjecture}
\newtheorem{definition}[theorem]{Definition}
\theoremstyle{definition}
\begin{document}
\title{Numerical Semigroups of small and large type}
\author{Deepesh Singhal}
\address{Hong Kong University of Science and Technology}
\email{dsinghal@connect.ust.hk}

\date{June 2020}
\begin{abstract}
    A numerical semigroup is a sub-semigroup of the natural numbers that has a finite complement.
    Some of the key properties of a numerical semigroup are its Frobenius number F, genus g and type t.
    It is known that for any numerical semigroup $\frac{g}{F+1-g}\leq t\leq 2g-F$.
    Numerical semigroups with $t=2g-F$ are called almost symmetric, we introduce a new property that characterises them.
    We give an explicit characterisation of numerical semigroups with $t=\frac{g}{F+1-g}$.
    We show that for a fixed $\alpha$ the number of numerical semigroups with Frobenius number $F$ and type $F-\alpha$ is eventually constant for large $F$.
    Also the number of numerical semigroups with genus $g$ and type $g-\alpha$ is also eventually constant for large $g$.
\end{abstract}

\maketitle

\section{Introduction}

A numerical semigroup $S$ is a subset of natural numbers that contains $0$, is closed under addition and has a finite complement.
The numbers in the complement are called gaps, and the number of gaps is called the genus of $S$, is denoted by $g(S)$.
The largest gap is called the Frobenius number, is denoted by $F(S)$. The smallest nonzero element of $S$ is called its multiplicity and is denoted by $m(S)$.

A pseudo-Frobenius number of $S$ is a gap $x$ such that for any non-zero $s$ in $S$, $x+s$ is still in $S$. The collection of the pseudo-Frobenius numbers of $S$ is denoted by $PF(S)$, the number of pseudo-Frobenius numbers of $S$ is called its type and is denoted by $t(S)$.


For numerical semigroups in general it is known that
$$\frac{g(S)}{F(S)+1-g(S)}\leq t(S)\leq 2g(S)-F(S).$$
The first inequality was proved in \cite{Type}, and the second in \cite{H Nari}.
Numerical semigroups that satisfy $t(S)=2g(S)-F(S)$ are called almost symmetric, this notion was introduced in \cite{AS3} and has been studied in several papers \cite{H Nari,AS4,AS5,AS6,AS7,AS8,AS9}.
It is known that all irreducible numerical semigroups are almost symmetric.

For a numerical semigroup $S$ we define it's $T$-Set to be
$$T(S)=\{x\in\mathbb{N}| F(S)-x\in ((\mathbb{Z}\setminus S)\cup \{0\})\}.$$
Note that $T(S)$ is a subset of natural numbers, it contains $0$ and has a finite complement with respect to $\mathbb{N}$. However it is not necessarily closed under addition, such sets are called numerical sets.
In fact it is closed under addition pricisely when $S$ is almost symmetric, in Section \ref{T-Set} we prove that
\begin{theorem}\label{AS Characterisation}
A numerical semigroup is almost symmetric if and only if its $T$-Set is itself a numerical semigroup.
\end{theorem}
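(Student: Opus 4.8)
The plan is to make the definition of $T(S)$ explicit, translate closure under addition into a purely arithmetic condition on the gaps of $S$, and then match that condition with almost symmetry. Throughout write $F=F(S)$, $g=g(S)$, $t=t(S)$. First I would observe that for $x\in\mathbb{N}$ one has $x\in T(S)$ precisely when $F-x\notin S$ or $x=F$; hence $T(S)$ contains every integer $\ge F$, and its complement in $\mathbb{N}$ is exactly $\{F-s : s\in S,\ 1\le s\le F\}$. This already shows that $T(S)$ is always a numerical set (it contains $0$ and is cofinite), so the theorem reduces entirely to deciding when $T(S)$ is closed under addition.

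Next I would reduce closure to a condition on the gaps of $S$. Because $T(S)$ contains everything $\ge F$, a sum $u+v$ with $u,v\in T(S)$ can escape $T(S)$ only when $u,v<F$ and $u+v<F$. Writing such elements as $u=F-i$, $v=F-j$ with $i,j$ gaps of $S$, a short computation gives $u+v\notin T(S)$ if and only if $i+j>F$ and $i+j-F\in S$. Therefore $T(S)$ is a numerical semigroup if and only if there is no pair of gaps $i,j$ of $S$ with $i+j>F$ and $i+j-F\in S$; call this condition $(\star)$.

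It then remains to show that $(\star)$ is equivalent to almost symmetry. I would first record the standard reformulation of almost symmetry in a form suited to $(\star)$: partition the gaps of $S$ according to whether $F-x\in S$ (\emph{type I}) or $F-x\notin S$ (\emph{type II}). A counting argument --- the map $x\mapsto F-x$ is a bijection from the type I gaps onto $S\cap[0,F-1]$, so there are $F+1-g$ of them and hence $2g-F-1$ type II gaps --- together with the elementary fact that every element of $PF(S)\setminus\{F\}$ is a type II gap (otherwise $x+(F-x)=F\in S$) yields $t\le 2g-F$, with equality exactly when every type II gap is a pseudo-Frobenius number. Thus $S$ is almost symmetric if and only if every type II gap lies in $PF(S)$. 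For $(\star)\Rightarrow$ almost symmetric I argue by contrapositive: if some type II gap $x$ is not pseudo-Frobenius, choose $s\in S\setminus\{0\}$ with $x+s\notin S$ and set $i=F-x$, $j=x+s$; these are gaps with $i+j-F=s\in S$, contradicting $(\star)$. For the converse, given gaps $i,j$ with $s:=i+j-F\in S\setminus\{0\}$, I examine $j$: if $F-j\in S$ then $i=(F-j)+s\in S$, impossible since $i$ is a gap; otherwise $F-j$ is a type II gap, hence a pseudo-Frobenius number by almost symmetry, and then $i=(F-j)+s\in S$, again impossible.

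The routine part is the bookkeeping of the second paragraph (the boundary cases $i+j\gtrless F$ and the special status of $F\in T(S)$). The conceptual heart is the equivalence $(\star)\Leftrightarrow$ almost symmetric; within it the decisive move is the substitution $i=F-x$, $j=x+s$, which converts a failure of the pseudo-Frobenius property into a witness against $(\star)$, while the reverse direction is exactly where the almost-symmetry hypothesis is consumed, through the fact that $F-j$ is forced to be a pseudo-Frobenius number.
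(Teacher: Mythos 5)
Your proof is correct, but it takes a genuinely different route from the paper's. The paper never analyzes closure of $T(S)$ element by element: it invokes the identity $A(T(S))=S\cup PF(S)$ together with the formulas $g(T(S))=F(S)-g(S)$ and $t(S)=g(S)-g(A(T(S)))$ to write
$$2g(S)-F(S)-t(S)=g(A(T(S)))-g(T(S)),$$
a quantity that is nonnegative because $A(T)\subseteq T$ for every numerical set $T$, and that vanishes exactly when $T(S)=A(T(S))$, i.e.\ when $T(S)$ is a numerical semigroup. That genus-counting argument is very short given the surrounding lemmas, it exhibits the failure of almost symmetry quantitatively as the genus deficit $g(A(T(S)))-g(T(S))$, and it is the form of the argument that the paper later generalizes to arbitrary numerical sets in Section 4. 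Your argument instead translates closure of $T(S)$ into the gap condition $(\star)$ (no gaps $i,j$ with $i+j>F$ and $i+j-F\in S$) and matches it against Nari's characterization of almost symmetry (every gap $x$ with $F(S)-x\notin S$ is pseudo-Frobenius), which you also prove from scratch via the type I/type II count. What your route buys is a self-contained, elementary proof needing neither $A(T(S))=S\cup PF(S)$ nor any genus bookkeeping; moreover, your condition $(\star)$ is precisely the statement of the Corollary the paper deduces \emph{from} the theorem (whenever $x,y\in Gap(S)$ and $x+y>F(S)$, then $x+y-F(S)\in Gap(S)$), so where the paper obtains that characterization as a consequence, in your treatment it is the engine of the proof.
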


If $S$ is a numerical semigroup then $A^{\star}(S)=S\cup PF(S)$ is another numerical semigroup. In \cite{H Nari} it is proven that
\begin{theorem}\cite[Theorem 3.7]{H Nari}
Given an almost symmetric numerical semigroup $S$, $A^{\star}(S)$ is almost symmetric if and only if $t(A^{\star}(S))=m(S)-t(S)$.
\end{theorem}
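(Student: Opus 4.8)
The plan is to reduce the statement to the definition of almost symmetry by computing the invariants $g(A^{\star}(S))$ and $F(A^{\star}(S))$, and then invoking the upper bound $t\le 2g-F$ recorded in the introduction. Write $g=g(S)$, $F=F(S)$, $t=t(S)$, $m=m(S)$, and set $S'=A^{\star}(S)=S\cup PF(S)$, with invariants $g'=g(S')$, $F'=F(S')$, $t'=t(S')$. Since $S'$ is a numerical semigroup, $S'$ is almost symmetric exactly when $t'=2g'-F'$. Hence it suffices to establish the single identity $2g'-F'=m-t$, valid for every almost symmetric $S$; granting it, $S'$ is almost symmetric iff $t'=2g'-F'=m-t$, which is the assertion.

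First I would compute the genus. Every pseudo-Frobenius number is a gap of $S$ and $|PF(S)|=t$, while $S'$ is obtained from $S$ by adjoining precisely these $t$ gaps; therefore $g'=g-t$. This step uses nothing about almost symmetry.

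The crux is the computation $F'=F-m$, which I would carry out for an arbitrary numerical semigroup. The number $F-m$ is a gap of $S$, since otherwise $F=(F-m)+m\in S$, and it is not pseudo-Frobenius because $(F-m)+m=F\notin S$; hence $F-m$ remains a gap of $S'$. Conversely, I claim every gap $x$ with $F-m<x\le F$ lies in $PF(S)$ and is therefore removed when passing to $S'$: for any nonzero $s\in S$ one has $s\ge m$, so $x+s>(F-m)+m=F$, forcing $x+s\in S$, which is exactly the pseudo-Frobenius condition. Consequently $F-m$ is the largest gap of $S'$, that is $F'=F-m$. I expect this identification of the Frobenius number to be the main point of the argument; the inequality $s\ge m$ for nonzero $s\in S$ is precisely what drives it.

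Finally I would bring in the almost symmetry of $S$, namely $t=2g-F$, to evaluate $2g'-F'=2(g-t)-(F-m)=2g-2t-F+m$ and substitute $F=2g-t$, obtaining $2g'-F'=m-t$. Combined with the fact that $S'$ is almost symmetric precisely when it attains the bound $t'=2g'-F'$, this yields that $S'$ is almost symmetric if and only if $t'=m-t$, completing the proof. The only inputs are that $A^{\star}(S)$ is a numerical semigroup (recorded just above the statement), the count $g'=g-t$, the Frobenius computation $F'=F-m$, and the defining equality $t=2g-F$ of almost symmetry.
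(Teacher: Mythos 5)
Your proof is correct, but it takes a genuinely different route from the paper's. You compute the invariants of $S'=A^{\star}(S)$ directly --- $g(S')=g(S)-t(S)$ (true for any numerical semigroup) and $F(S')=F(S)-m(S)$ (true for any non-ordinary one) --- and then invoke the definition of almost symmetry as attainment of the bound $t\leq 2g-F$. The paper instead derives this statement from its $T$-set machinery: by Theorem \ref{AS Characterisation}, $S$ is almost symmetric iff $T(S)$ is a numerical semigroup, in which case $T(S)=A(T(S))=A^{\star}(S)$; then $A^{\star}(S)$ is almost symmetric iff $T(T(S))$ is a numerical semigroup, which by Corollary \ref{T^2(S) is NS t+t=m} (the equality case of $t(H)+t(T(H))\leq g(A(H))-g(H)+m(H)$) happens iff $t(S)+t(T(S))=m(S)$. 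Your key step $F(A^{\star}(S))=F(S)-m(S)$ is exactly the content of the paper's Lemma \ref{F-m}, which the paper obtains via $A(T(S))=S\cup PF(S)$ and $F(T(S))=F(S)-m(S)$; your direct argument (any gap $x>F-m$ satisfies $x+s>F$ for all nonzero $s\geq m$, hence is pseudo-Frobenius, while $F-m$ itself stays a gap because $(F-m)+m=F\notin S$) is more elementary and self-contained. What the paper's machinery buys is generality: the same apparatus applies to arbitrary numerical sets and to iterated applications of $T$ and $A^{\star}$, yielding Theorem \ref{A star till k almost sym} and the max-embedding-dimension characterization of Corollary \ref{S union F max ED}, none of which your computation alone gives. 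One caveat applies to your write-up: when $S$ is ordinary, i.e. $m(S)=F(S)+1$, your claim that $F-m$ is a gap of $S$ fails (it equals $-1$), and $A^{\star}(S)=\mathbb{N}$ has no Frobenius number in $\mathbb{N}$; this degenerate case is settled only by conventions on $F(\mathbb{N})$ and $t(\mathbb{N})$, and the paper's own Corollary \ref{T^2(S) is NS t+t=m} likewise assumes $S$ non-ordinary, so you are in the same position as the paper, but you should state the restriction explicitly.
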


By using $T$-Sets we find another equivalent condition for this to happen, we prove in Corollary \ref{S union F max ED} that
\begin{theorem}\label{sec 4 1}
If $S$ is an almost symmetric numerical semigroup then $A^{\star}(S)$ is almost symmetric if and only if $S\cup \{F(S)\}$ has max embedding dimension.
\end{theorem}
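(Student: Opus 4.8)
The plan is to reduce the statement to a single explicit computation of the $T$-set of $A^{\star}(S)$. Write $F=F(S)$, $m=m(S)$, $g=g(S)$, $t=t(S)$, let $\mathrm{Gaps}(S)=\mathbb{N}\setminus S$, and set $B=S\cup\{F\}$. Applying Theorem \ref{AS Characterisation} to the numerical semigroup $A^{\star}(S)$, we see that $A^{\star}(S)$ is almost symmetric if and only if $T(A^{\star}(S))$ is itself a numerical semigroup; so it suffices to identify $T(A^{\star}(S))$ and to recognise exactly when it is closed under addition. A preliminary observation, valid for \emph{any} numerical semigroup, is that $F(A^{\star}(S))=F-m$: indeed any gap $h>F-m$ satisfies $h+s\geq h+m>F$ for every $s\in S\setminus\{0\}$ and is therefore pseudo-Frobenius, while $F-m$ is itself a gap (if $F-m\in S$ then $F=(F-m)+m\in S$, a contradiction) that is not pseudo-Frobenius. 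Hence passing to $A^{\star}(S)=S\cup PF(S)$ deletes precisely the gaps above $F-m$, leaving $F-m$ as the new Frobenius number, and the gaps of $A^{\star}(S)$ are $\mathrm{Gaps}(S)\setminus PF(S)$.

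The heart of the argument is the identity
\[
T(A^{\star}(S))=\{\,b-m:\ b\in B,\ b\geq m\,\}.
\]
To prove it I would substitute $y=x+m$ and compare the two sets on the range $y\in[m,F]$ (everything above $F-m$ lies in both sets automatically, since $x>F-m$ forces $x+m>F$, hence $x+m\in S$). After this substitution, and using $F(A^{\star}(S))=F-m$, the required equivalence reads: for $y\in[m,F]$,
\[
F-y\in\bigl(\mathrm{Gaps}(S)\setminus PF(S)\bigr)\cup\{0\}\iff y\in S\cup\{F\}.
\]
The cases $y=F$ and $y\in S$ are formal and hold for every numerical semigroup: if $y\in S\setminus\{0\}$ then $F-y$ is a gap (else $F\in S$) and $F-y\notin PF(S)$ (since $(F-y)+y=F\notin S$), so the left side holds, matching $y\in S$. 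The delicate case is $y$ a gap with $y<F$: here the right side fails, so I must show $F-y\in S\cup PF(S)=A^{\star}(S)$, i.e.\ that the left side also fails. This is exactly where almost symmetry of $S$ enters, through the lemma $A^{\star}(S)=T(S)$: by definition of $T(S)$ the condition $F-y\in T(S)$ is equivalent to $y\in(\mathbb{Z}\setminus S)\cup\{0\}$, which holds because $y$ is a gap.

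It remains to record that $A^{\star}(S)=T(S)$ for almost symmetric $S$. This follows from the general containment $A^{\star}(S)\subseteq T(S)$ (which comes from the same two facts used above: $F-s$ is a gap for $s\in S\cap[1,F]$, and $F-f$ is a gap for $f\in PF(S)$) together with a cardinality count: $|T(S)\cap[0,F]|=g+1$ while $|A^{\star}(S)\cap[0,F]|=(F+1-g)+t$, and these agree precisely when $t=2g-F$. With the identity in hand the conclusion is immediate, since the set $\{\,b-m:\ b\in B,\ b\geq m\,\}$ is a numerical semigroup if and only if $a+b-m\in B$ for all $a,b\in B\setminus\{0\}$, which is the standard characterisation of $B$ having maximal embedding dimension (here $m(B)=m$ because in the non-degenerate case $F>m$). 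Chaining the equivalences — $A^{\star}(S)$ almost symmetric $\Leftrightarrow$ $T(A^{\star}(S))$ a numerical semigroup $\Leftrightarrow$ $\{\,b-m:b\in B,\ b\geq m\,\}$ a numerical semigroup $\Leftrightarrow$ $B$ has maximal embedding dimension — gives the theorem.

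The step I expect to be the main obstacle is the gap case of the central identity, since it is the only place almost symmetry is essential and it relies on $A^{\star}(S)=T(S)$; by contrast the computation $F(A^{\star}(S))=F-m$ and the maximal-embedding-dimension reformulation are routine once set up. I would also dispose separately of the degenerate semigroup $S=\{0\}\cup\{m,m+1,\dots\}$, where $F=m-1$ and $A^{\star}(S)=\mathbb{N}$, so that the normalisation $m(B)=m$ may be assumed throughout.
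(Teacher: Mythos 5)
Your proof is correct, and while it shares the paper's overall skeleton --- both arguments reduce, via Theorem \ref{AS Characterisation}, to deciding when $T(A^{\star}(S))$ is a numerical semigroup, and both ultimately rest on identifying that set with $\{\,b-m : b\in S\cup\{F\},\ b\neq 0\,\}$ --- the way you close the loop is genuinely different. The paper never touches the closure characterisation of maximal embedding dimension; instead it runs everything through type arithmetic for numerical sets: it proves $t(H)=g(A(H))-g(A(T(H)))$ (Corollary \ref{t(H))}), deduces $t(S\cup\{F(S)\})=t(S)+t(T(S))-1$ (Corollary \ref{type decresing}) and the criterion ``$T(T(S))$ is a numerical semigroup iff $t(S)+t(T(S))=m(S)$'' (Corollary \ref{T^2(S) is NS t+t=m}), and then invokes the fact that maximal embedding dimension is equivalent to $t=m-1$. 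You bypass all of that by applying the other standard criterion --- $B$ has maximal embedding dimension iff $x+y-m\in B$ for all nonzero $x,y\in B$ --- directly to the shifted set, which makes your proof shorter and self-contained. Two further differences: you prove $T(S)=A^{\star}(S)$ for almost symmetric $S$ by containment plus a cardinality count, whereas the paper gets it for free from $A(T(S))=S\cup PF(S)$ (Theorem \ref{A of T of S}); and you compute $T(A^{\star}(S))$ in one step using almost symmetry, whereas the paper's Lemma \ref{shift T^2} describes $T(T(H))$ for an \emph{arbitrary} non-ordinary numerical set. That generality is what your shortcut gives up: the paper's intermediate results hold without the almost-symmetry hypothesis (e.g.\ ``$T(T(S))$ is a numerical semigroup iff $S\cup\{F(S)\}$ has max embedding dimension'' for any non-ordinary $S$) and are reused later, for instance in Theorem \ref{A star till k almost sym}, while your argument proves exactly the stated theorem and nothing more.
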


We also extend this result by applying the operation $A^{\star}$ multiple times. Note that $g(A^{\star}(S))<g(S)$ so this operation can only be applied finitely many times. We define $L(S)$ in Section \ref{Sec: T set of numerical set}, it measures how many times it can be applied. Let $m_i(S)$ be the $i^{th}$ positive element and $F_i(S)$ be the $i^{th}$ largest gap of $S$. We prove the following in Section \ref{Sec: T set of numerical set}.
\begin{theorem}\label{A star till k almost sym}
Given a numerical semigroup $S$, and $k< L(S)$.\\
$S,A^{\star}(S),\dots, A^{\star k}(S)$ are all almost symmetric numerical semigroups if and only if
\begin{itemize}
    \item $2i\in [0,k]$ implies $t(A^{\star 2i}(S))=2g(S)-m_{i}(S)-F_{i+1}(S)$
    \item $2i+1\in [0,k]$ implies $t(A^{\star 2i+1}(S))=F_{i+1}(S)-2g(S)+m_{i+1}(S)$
\end{itemize}
\end{theorem}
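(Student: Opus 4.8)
The plan is to reduce the entire statement to one clean identity: for an almost symmetric semigroup the operator $A^{\star}$ and the $T$-set agree. Granting this, the iterates $A^{\star j}(S)$ become iterates of the $T$-operation, whose effect on the multiplicities $m_j$ and gaps $F_j$ is completely explicit, and both the type formulas and the almost-symmetry conditions drop out by bookkeeping. The first and central step is the \emph{Key Lemma}: if $S$ is almost symmetric then $A^{\star}(S)=T(S)$. I would prove this directly. Since $F(S)\notin S$, for any $p\in PF(S)$ the number $F(S)-p$ cannot lie in $S\setminus\{0\}$ (else $F(S)=p+(F(S)-p)\in S$), so $F(S)-p\in(\mathbb{Z}\setminus S)\cup\{0\}$ and hence $p\in T(S)$; as $S\subseteq T(S)$ as well, we get $A^{\star}(S)=S\cup PF(S)\subseteq T(S)$. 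A direct count of the gaps of the $T$-set gives $g(T(S))=F(S)-g(S)$, while almost symmetry gives $g(A^{\star}(S))=g(S)-t(S)=g(S)-(2g(S)-F(S))=F(S)-g(S)$. A numerical set contained in another of equal genus must coincide with it, so $A^{\star}(S)=T(S)$ (consistent with Theorem~\ref{AS Characterisation}, which guarantees $T(S)$ is a semigroup).

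Next I would record the recursion formulas for $T$ on an arbitrary numerical set $U$, which hold with no symmetry hypothesis. Reading off the gaps and the positive elements of $T(U)$ yields
$$F_j(T(U))=F(U)-m_j(U),\qquad m_j(T(U))=F(U)-F_{j+1}(U).$$
Composing two steps cancels the $F(U)$-dependence and produces a pure index shift,
$$F_j(T(T(U)))=F_{j+1}(U)-m_1(U),\qquad m_j(T(T(U)))=m_{j+1}(U)-m_1(U).$$
Iterating this shift forces $g$ and $F$ of every $T$-iterate; explicitly one obtains, for the even and odd iterates of $S$, the genus values $g(S)-m_i(S)$ and $F_{i+1}(S)-g(S)$, and the Frobenius values $F_{i+1}(S)-m_i(S)$ and $F_{i+1}(S)-m_{i+1}(S)$. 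Here the hypothesis $k<L(S)$ is exactly what keeps the referenced $m_i(S),F_{i+1}(S)$ in range and all iterates proper, so these formulas are valid throughout $0\le j\le k+1$.

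Now the equivalence itself. Because $g(A^{\star}(W))=g(W)-t(W)$ always holds, knowing the type values $t(A^{\star j}(S))$ for $j\le k$ is the same data as knowing the genus values $g(A^{\star j}(S))$ for $j\le k+1$, and the two displayed bullet formulas translate \emph{exactly} into the closed forms above. For the forward direction, if $S,A^{\star}(S),\dots,A^{\star k}(S)$ are all almost symmetric, applying the Key Lemma at each stage gives $A^{\star j}(S)=T^{\,j}(S)$ for $j\le k+1$, so the $T$-recursions produce the genus formulas, hence the type formulas. For the converse I would induct on $j$: assuming $A^{\star 0}(S),\dots,A^{\star(j-1)}(S)$ are almost symmetric, we again have $A^{\star j}(S)=T^{\,j}(S)$, so $F(A^{\star j}(S))$ is computable by the recursion; the inequality $t(W)\le 2g(W)-F(W)$ of \cite{H Nari} rewrites as $g(A^{\star}(W))\ge F(W)-g(W)$ with equality if and only if $W$ is almost symmetric, and the assumed genus value of $g(A^{\star(j+1)}(S))$ is precisely $F(A^{\star j}(S))-g(A^{\star j}(S))$, forcing equality and therefore almost symmetry of $A^{\star j}(S)$.

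The main obstacle is not the Key Lemma, which is short, but the careful index bookkeeping in the $T$-recursions: one must track the exact ranges of $j$ for which the one-step formulas hold, verify that the two-step shift iterates correctly, and confirm that $k<L(S)$ precisely guarantees non-degeneracy of every iterate used. The only genuinely delicate logical point is avoiding circularity in the converse: the identity $A^{\star j}(S)=T^{\,j}(S)$ used to compute $F(A^{\star j}(S))$ relies on almost symmetry of the \emph{earlier} iterates (the inductive hypothesis), while almost symmetry of $A^{\star j}(S)$ itself is then deduced from the genus formula for the \emph{next} iterate via the sharp inequality, so the induction closes cleanly.
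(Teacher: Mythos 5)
Your proposal is correct and follows essentially the same route as the paper: both reduce the statement to $T$-iterates via the identity $A^{\star}(S)=T(S)$ for almost symmetric $S$ (which the paper gets from Theorem \ref{AS Characterisation} together with Theorem \ref{A of T of S}, while you prove it directly by a containment-plus-genus-count argument), both rely on the explicit formulas for $F$, $g$, $m$ of the iterates $T^{j}(S)$ (the paper's Lemma \ref{shift T^2} and Proposition \ref{shift mi}), and both prove the converse by the same induction in which the assumed type formula forces equality in the almost-symmetry bound at each stage. Your only organizational difference is computing individual types as genus differences of consecutive $T$-iterates rather than invoking the cumulative-sum Corollaries \ref{Even} and \ref{Odd}, which is the same bookkeeping arranged incrementally instead of telescopically.
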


The set of all numerical semigroups can be arranged in a directed tree in which $S'$ is the parent of $S$ if $S'=S\cup\{F(S)\}$. This tree has been studied in many papers \cite{Semigroup Tree, Tree 1, Tree 2}.
\cite{Semigroup Tree} studies the infinite chains of this tree and characterises them.
The only numerical semigroups with $F(S)<m(S)$ are those of the form $\{0,f+1\rightarrow\}$, they form a chain and within this chain the type keeps increasing.
Note that in a chain of the tree if we have a numerical semigroup with $m(S)<F(S)$ then every numerical semigroup after that in the chain will also satisfy $m(S)<F(S)$.
We prove that if $m(S)<F(S)$ then
$$t(S\cup\{F(S)\})\geq t(S).$$
This implies that in any other chain the type of the numerical semigroup must eventually be constant. This is done in Section \ref{Sec: T set of numerical set}.
\begin{theorem}\label{eventually constant type}
Given a sequence $S_i$ of numerical semigroups such that for each $i$ $S_{i}=S_{i+1}\cup \{F(S_{i+1})\}$ and for at least one $i$ we have $m(S_i)<F(S_i)$. Then their type $t(S_i)$ will eventually be constant.
\end{theorem}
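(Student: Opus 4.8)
The plan is to combine two ingredients: the monotonicity claim $t(S \cup \{F(S)\}) \geq t(S)$ that the text flags just before the theorem (for the case $m(S) < F(S)$), together with a suitable upper bound on the type that prevents it from increasing forever. The sequence $S_i$ is an infinite descending chain in the semigroup tree: $S_i$ is the parent of $S_{i+1}$, so $g(S_i) = g(S_{i+1}) - 1$ is strictly decreasing as $i$ increases, meaning genus grows without bound as we go down the chain (increasing $i$). Reading the indexing the other way, as $i \to \infty$ we move toward larger genus; the hypothesis that $m(S_{i_0}) < F(S_{i_0})$ for some $i_0$ propagates, because the excerpt notes that once a member of a chain has $m < F$, every later member does too. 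So from some point on, every $S_i$ satisfies $m(S_i) < F(S_i)$, and the monotonicity inequality applies at every subsequent step.

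First I would establish that the sequence $t(S_i)$ is eventually \emph{nondecreasing}. Since $S_i = S_{i+1} \cup \{F(S_{i+1})\}$, the monotonicity result says $t(S_{i+1} \cup \{F(S_{i+1})\}) \geq t(S_{i+1})$ whenever $m(S_{i+1}) < F(S_{i+1})$, i.e.\ $t(S_i) \geq t(S_{i+1})$. Care is needed here with the direction of the chain: as stated, the parent $S_i$ has the larger type, so the types are nonincreasing as $i$ increases and nondecreasing as we climb back up. Either way, along the monotone part of the chain the type sequence is monotone in one fixed direction. A monotone integer sequence that is bounded must eventually be constant, so everything reduces to producing a uniform bound.

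The key step is the upper bound. I would invoke the classical inequality $t(S) \leq 2g(S) - F(S)$ together with basic relations between the invariants along the chain. Because $S_i = S_{i+1} \cup \{F(S_{i+1})\}$, we have $g(S_i) = g(S_{i+1}) - 1$ and $F(S_i) < F(S_{i+1})$, so the quantity $2g - F$ need not be monotone by itself; the cleaner route is to note that the multiplicity $m(S_i)$ is eventually constant along the chain (adjoining the Frobenius number never changes the smallest positive element once $F > m$), and that $t(S)$ is bounded above by $m(S) - 1$ for any numerical semigroup, since every pseudo-Frobenius number lies in a distinct residue class modulo $m(S)$ among the nonzero classes. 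This gives a single bound $t(S_i) \leq m(S_i) - 1 = m - 1$ valid for all large $i$, independent of $i$.

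The main obstacle I anticipate is reconciling the two monotonicity directions cleanly and making sure the bound $t \leq m-1$ is applied with the correct, stabilized value of $m$; once the multiplicity is pinned down as eventually constant, the argument is immediate. Concretely, I would organize the proof as: (1) show $m(S_i)$ stabilizes to some fixed $m$ for all $i \geq i_0$; (2) cite $t(S_i) \leq m - 1$ so the type is bounded; (3) apply the monotonicity inequality $t(S_i) \geq t(S_{i+1})$ to conclude the type sequence is monotone; and (4) conclude that a bounded monotone sequence of positive integers is eventually constant. The only genuinely load-bearing input is the monotonicity lemma $t(S \cup \{F(S)\}) \geq t(S)$, which the paper proves separately; the remainder is the standard observation that a bounded monotone integer sequence terminates.
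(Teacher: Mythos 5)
Your proposal is correct and rests on the same key ingredient as the paper's proof: once one $S_{i_0}$ is non-ordinary (i.e.\ $m(S_{i_0})<F(S_{i_0})$), every later $S_i$ is too, so Corollary \ref{type decresing} gives $t(S_i)\geq t(S_{i+1})$ for all $i\geq i_0$, and a nonincreasing sequence of positive integers is eventually constant. Your steps (1)--(2) (stabilizing the multiplicity and invoking $t\leq m-1$) are redundant: the sequence is nonincreasing in $i$, not nondecreasing as you first wrote, so the trivial lower bound $t\geq 1$ is the only bound needed, which is exactly how the paper concludes.
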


We also consider the numerical semigroups with the smallest type given $F,g$, in Section \ref{Sec: Staircase} we classify them.
\begin{theorem}\label{Staircase}
A numerical semigroup $S$ satisfies
$$t(S)=\frac{g(S)}{F(S)+1-g(S)}$$ if and only if either $S$ is symmetric or it is of the following form for some $m,n$
$$S=\{0,m,2m\dots nm\rightarrow\}.$$
\end{theorem}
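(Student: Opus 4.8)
The plan is to analyze the lower-bound inequality $t(S) \geq \frac{g(S)}{F(S)+1-g(S)}$ and characterize exactly when equality holds. First I would recall why this inequality is true, since the equality case is controlled by the proof of the inequality. The quantity $F(S)+1-g(S)$ counts the number of elements of $S$ in the interval $[0,F(S)]$ (equivalently, the nonzero elements of $S$ up to $F$ together with $0$), so write $n = F(S)+1-g(S)$ for this count. The bound comes from the fact that $PF(S)$ generates enough ``coset'' structure: every gap $x$ can be written so that adding elements of $S$ eventually lands in $S$ via a pseudo-Frobenius number. Concretely, I would use the standard partition of the gaps according to the Apéry-style or ``cosets of $S$'' argument showing that each of the $n$ small elements of $S$, when one considers the map sending a gap to an associated pseudo-Frobenius number, covers at most $t(S)$ gaps, forcing $g(S) \leq t(S)\cdot n$.

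Second, with the inequality's proof in hand, I would extract the equality condition. Equality $g(S) = t(S)\cdot(F(S)+1-g(S))$ means every step in the counting argument is tight: each small element of $S$ must account for exactly $t(S)$ gaps, and the covering must be a perfect partition with no overlaps or deficiencies. I expect this rigidity to force a very regular multiplicative structure on $S$. The two families in the statement are the extreme regular cases: symmetric semigroups, where $t(S)=1$ and $g = \frac{F+1}{2}$ so $n = F+1-g = g$ and indeed $t = g/n = 1$; and the ``staircase'' semigroups $S=\{0,m,2m,\dots,nm\rightarrow\}$, where the small elements are precisely the multiples of $m$ and the gaps fill in the $m-1$ residues strictly between consecutive multiples.

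Third, for the forward direction I would compute $F$, $g$, and $t$ directly for $S=\{0,m,2m,\dots,nm\to\}$. Here $F(S)=nm-1$, the gaps are exactly the non-multiples of $m$ in $[1,nm-1]$, giving $g(S)=n(m-1)$, and the pseudo-Frobenius numbers are $\{m-1, 2m-1, \dots, nm-1\}=\{im-1 : 1\le i\le n\}$ of which there are $n$, so $t(S)=n$; one then checks $F+1-g = nm - n(m-1) = n$, hence $\frac{g}{F+1-g} = \frac{n(m-1)}{n} = m-1 = t$ only if $m-1=n$... so I must be careful here and instead verify $\frac{g}{F+1-g}=\frac{n(m-1)}{n}=m-1$ while computing $t(S)$ correctly; the correct equality to confirm is that this $S$ realizes the minimal type, and I would recompute $t$ and $n$ to pin down the exact arithmetic rather than trust a first guess. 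For the reverse direction, which is the main obstacle, I would assume equality and use the tightness of the covering argument to show that the small elements of $S$ must be equally spaced, i.e. form an arithmetic progression $0,m,2m,\dots$ up to $F+1$; the delicate point is ruling out all irregular configurations, and I anticipate that the cleanest route is to reinterpret equality in terms of the $T$-set or the Apéry set so that tightness translates into each Apéry-set element being a single multiple of $m$, after which symmetry ($t=1$) versus the staircase emerges as the only two possibilities. The hard part will be showing that no ``mixed'' semigroup can meet the bound, which I expect to handle by a careful induction on the number of small elements or by a direct contradiction from any gap appearing where equal spacing is violated.
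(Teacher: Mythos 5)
Your proposal correctly identifies the right framework --- the paper likewise proves the converse by exploiting tightness in the injective counting map behind $g(S)\leq n(S)t(S)$ --- but it stops at the level of a plan, and the two places where actual work is required are both left unresolved. First, the easy direction is botched: for $S=\{0,m,2m,\dots,nm\rightarrow\}$ the pseudo-Frobenius numbers are \emph{not} $\{im-1 : 1\le i\le n\}$; since $m-1$ plus the nonzero element $m$ gives the gap $2m-1$ (when $n\ge 2$), the correct set is $PF(S)=[(n-1)m+1,\,nm-1]$, so $t(S)=m-1$, which is exactly what makes $g(S)=n(m-1)=n(S)t(S)$ work out. You noticed the arithmetic inconsistency and promised to ``recompute,'' but the verification is never completed.

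Second, and more seriously, the converse --- the entire content of the theorem --- is never executed; phrases like ``I anticipate,'' ``I expect to handle,'' and ``the hard part will be'' stand in for the argument itself. The concrete steps that are missing (and that the paper supplies) are: (i) surjectivity of the counting map $\psi_S(x)=(\phi_S(x),x+\phi_S(x))$ implies that $P-s$ is a gap for \emph{every} small nonzero $s\in S$ and every $P\in PF(S)$, so in particular $s<P$ always, and the inverse map $\lambda_S(s,P)=P-s$ is injective; (ii) the structural fact $[F-m+1,F]\subseteq S\cup PF(S)$ (in the paper this is Lemma \ref{F-m}, a consequence of $S\cup PF(S)=A(T(S))$ and $F(T(S))=F-m$), which combined with the ordering in (i) pins down $PF(S)=[F-t+1,F]$ and $[F-m+1,F-t]\subseteq S$; (iii) if $m\geq t+2$ then the two distinct pairs $(F-(t+1),F-(t-1))$ and $(F-t,F-(t-2))$ both map to $2$ under $\lambda_S$, contradicting injectivity, so $m=t+1$; (iv) since $[F-(m-2),F]$ consists of gaps it contains no multiple of $m$, forcing $F\equiv -1\pmod m$, and a final congruence argument shows every small element of $S$ is a multiple of $m$. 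Your suggested route via ``each Ap\'ery-set element being a single multiple of $m$'' is not developed enough to tell whether it could replace steps (i)--(iv); as written, the proof of the hard direction simply does not exist.
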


Denote the number of numerical semigroups with Frobenius number $F$ and type $t$ by $T(F,t)$.
By $T_1(F,t)$ denote the number of almost symmetric numerical semigroups with Frobenius number $F$ and type $t$.
By $L(g,t)$ the number of numerical semigroups with genus $g$ and type $t$.

In \cite{Almost symmetric} it was proved that for a fixed $\alpha$, the function $T_1(F,F-2\alpha)$ remains constant once $F>4\alpha+1$.
This was done by finding a bijection between the set of almost symmetric numerical semigroups $S$ with $F(S)=F$, $t(S)=F-2\alpha$ and the set of numerical semigroups with genus $g$.
Following a similar method we show in Theorem \ref{F- alpha} and Theorem \ref{g- alpha} that

\begin{theorem}
For a fixed $\alpha$, the functions $T(F,F-\alpha)$ and $L(g,g-\alpha)$ are both eventually constant.
\end{theorem}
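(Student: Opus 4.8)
The plan is to show that in both regimes the extremal type condition forces a rigid shape on $S$ --- a long initial run of gaps followed by a bounded ``active window'' --- so that each such $S$ is encoded by a bounded amount of data that becomes independent of $F$ (respectively $g$) once $F$ (respectively $g$) is large. I would begin by recording two structural bounds. Writing $m=m(S)$, the standard inequality $t(S)\le m(S)-1$ (the type counts the maximal elements of $\mathrm{Ap}(S,m)$ under the order $a\le_S b\iff b-a\in S$, and $0$ is never maximal) gives, for $t(S)=F-\alpha$, that $m\ge F-\alpha+1$, so every nonzero element of $S$ below $F$ lies in the window $[F-\alpha+1,\,F-1]$. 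For the genus version, combining $t\le m-1$ with the displayed bound $t\le 2g-F$ yields $m\ge g-\alpha+1$ and $F\le g+\alpha$, hence $F-m\le 2\alpha-1$: all gaps exceeding $m$ lie in the bounded window $[m+1,\,m+2\alpha-1]$.

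For $T(F,F-\alpha)$ I would encode $S$ by the set of co-distances $A=\{\,F-a:a\in S,\ 1\le a\le F-1\,\}\subseteq[1,\alpha-1]$, noting $g=F-|A|$. Once $F>2\alpha-2$, every element of $[1,\alpha-1]$ is a gap and any two elements of $S$ below $F$ sum to more than $F$, so closure of $S$ is automatic and the only remaining constraint is the type. Since $F-y\in S\iff y\in A$ for $0\le y\le\alpha-1$, a gap $x$ fails to be pseudo-Frobenius exactly when some $x+a\le F$ is again a gap, which forces $x\le F-m\le\alpha-1$ and holds iff there is $c\in A$ with $x\le c$ and $c-x\notin A$. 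This is a condition on $A$ alone. Thus $S\mapsto A$ is a bijection onto the fixed finite family of $A\subseteq[1,\alpha-1]$ whose number of non-pseudo-Frobenius $x$ equals $\alpha-|A|$, so $T(F,F-\alpha)$ is eventually constant.

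For $L(g,g-\alpha)$ I would run the dual argument with the window sitting just above the multiplicity: encode $S$ by $D=\{\,y-m:y\text{ a gap},\ y>m\,\}\subseteq[1,2\alpha-1]$, so that $m=g+1-|D|$ is recovered from $g$ and $|D|$. For large $g$ the window lies below $2m$, so closure is again automatic. Here every gap $y>m$ is pseudo-Frobenius, since adding any nonzero element of $S$ (all of which are $\ge m$) pushes $y$ past $F\le m+2\alpha-1<2m$; and a low gap $x$ fails to be pseudo-Frobenius precisely when there is an integer $j\ge 0$ with $j\notin D$ and $x+j\in D$, a condition depending only on $D$. Setting the number of such $x$ equal to $\alpha$ singles out a fixed finite family of admissible $D$, onto which $S\mapsto D$ is a bijection, giving that $L(g,g-\alpha)$ is eventually constant.

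The main obstacle is establishing the rigidity and the parameter-independence of the type count together: one must verify that the pseudo-Frobenius test applied to \emph{every} gap, not only those inside the window, is governed by $A$ (respectively $D$) alone, and that the encoding is a genuine bijection, i.e. that each admissible $A$ (respectively $D$) reconstructs a valid numerical semigroup with the prescribed invariants once $F$ (respectively $g$) exceeds an explicit threshold of order $2\alpha$. Checking closure and confirming that no gap outside the window can fail the pseudo-Frobenius test are the steps most in need of care.
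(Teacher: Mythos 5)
Your proof is correct, and although the counting object you arrive at is the paper's object in disguise, your route to it is genuinely different. In the Frobenius case your window set $A=\{F-a \mid a\in S,\ 1\le a\le F-1\}$ is exactly the set of gaps of the paper's numerical set $T(S)$, your count of non-pseudo-Frobenius gaps equals $g(A(T(S)))$ (by Corollary \ref{genus of T of S}), and your admissibility condition $f(A)=\alpha-|A|$ is literally the condition $g(T)+g(A(T))=\alpha$ of Theorem \ref{F- alpha}; similarly your $D$ in the genus case is a dual encoding (gaps above the multiplicity rather than elements below $F$) of the paper's $T(S)$, with your condition $h(D)=\alpha$ matching $g(A(T))=\alpha$ of Theorem \ref{g- alpha}. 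The difference is in the machinery used to establish rigidity and surjectivity. The paper never invokes $t(S)\le m(S)-1$: it runs everything through the $T$-set formalism of Section \ref{T-Set} (Theorem \ref{A of T of S}, Corollary \ref{genus of T of S}, Lemma \ref{Construct S}), and in the genus case it needs the sandwich theorem $S\subseteq T\subseteq S\cup B(S)$ (Theorem \ref{TBUS}, imported from the unpublished \cite{My SDSU}) to bound $g(T)$ from below and get $F>2F(T)$. You instead make the rigidity explicit at the outset via the elementary bounds $t\le m-1$ and $t\le 2g-F$, which confine all nonzero elements of $S$ below $F$ (respectively all gaps above $m$) to a window of length about $\alpha$ (respectively $2\alpha$), after which closure and the pseudo-Frobenius test are verified by hand. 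What your route buys: it is self-contained (no dependence on \cite{My SDSU}), and it gives a sharper threshold in the Frobenius case ($F>2\alpha-2$ versus the paper's $F>4\alpha-6$), since $F-m\le\alpha-1$ beats the paper's bound $F(T)=F(A(T))\le 2g(A(T))-1\le 2\alpha-3$. What the paper's route buys: the $T$-set identities are reused throughout the paper (e.g.\ Corollary \ref{T_1} on almost symmetric semigroups and all of Section \ref{Sec: T set of numerical set}), and phrasing the answer as counting numerical sets with prescribed $g(T)+g(A(T))$ or $g(A(T))$ identifies intrinsically what the eventual constant counts, rather than leaving it as the cardinality of an ad hoc family of admissible window sets.
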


On the other hand for a fixed $\alpha$ the number of numerical semigroups of type $\alpha$ grow exponentially.
By constructing explicit families of numerical semigroups we show in Theorems \ref{construction AS type t} and \ref{construction type t} that
\begin{theorem}
For a fixed positive integer $\alpha$ and $F> 3\alpha+3$ such that $F\equiv \alpha (mod\;2)$
$$T_1(F,\alpha)\geq 2^{\frac{F}{6}-\frac{\alpha}{2}}.$$
Also for a fixed positive integer $\alpha$ and $\epsilon>0$, for sufficiently large $F$
$$T(F,\alpha)>2^{(0.1-\epsilon)F}.$$
\end{theorem}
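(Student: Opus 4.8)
The plan is to prove both bounds by exhibiting explicit, exponentially large families of numerical semigroups with the prescribed Frobenius number and type. The two inequalities come from two related constructions: a symmetric ``free middle'' family (for $T_1$) and a not-necessarily-symmetric variant of it (for $T$).

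For the almost symmetric bound I would fix $F$ with $F\equiv\alpha\pmod 2$, set $a=\lfloor F/3\rfloor+1$ so that $3a>F$, and consider
$$S=\{0\}\cup C\cup\{x:\ F-a< x,\ x\neq F\},$$
where $C\subseteq[a,F-a]$ is chosen to be \emph{reflection respecting}: for each pair $\{x,F-x\}$ with $a\le x<F/2$ exactly one of the two is placed in $C$. Closure is immediate, since every nonzero element is $\ge a$ and hence any sum of two of them exceeds $F-a$ and lands in the forced top block; the value $F$ is never hit because $x\in S$ forces $F-x\notin S$. Thus each choice gives a numerical semigroup with $F(S)=F$, distinct choices give distinct semigroups, and the number of free pairs is $\approx F/6$, already producing $2^{F/6+O(1)}$ symmetric (type $1$) semigroups.

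To raise the type to $\alpha$ I would use the \emph{witness principle}: if $x<F$ is a gap with $F-x\in S$, then $s=F-x\in S_{>0}$ gives $x+s=F\notin S$, so $x$ is \emph{not} pseudo-Frobenius. Hence the only possible pseudo-Frobenius numbers of a family member are $F$ together with gaps $x$ having $F-x\notin S$. I would then \emph{freeze} $\lfloor(\alpha-1)/2\rfloor$ of the central pairs $\{p,F-p\}$ closest to $F/2$, declaring \emph{both} members gaps (and, when $F$ is even, also declaring $F/2$ a gap). Removing a $p$ near $F/2$ preserves closure because $p<2F/3$ is not a sum of two elements $\ge a$, and one checks $p+s>F-a$ for every $s\in S_{>0}$ with equality to $F$ impossible, so each frozen value is genuinely pseudo-Frobenius; the witnesses of all other gaps survive because a frozen gap and its partner lie in the same frozen pair. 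By the witness principle no further gap is pseudo-Frobenius, so $t(S)=\alpha$ and $S$ is almost symmetric. Since $\lfloor(\alpha-1)/2\rfloor$ pairs are spent, $\approx F/6-\alpha/2$ free pairs remain, giving $T_1(F,\alpha)\ge 2^{F/6-\alpha/2}$; the hypothesis $F>3\alpha+3$ is exactly what guarantees enough free pairs survive.

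For the general bound, note first that when $F\equiv\alpha\pmod 2$ the above already yields $T(F,\alpha)\ge T_1(F,\alpha)\ge 2^{F/6}>2^{(0.1-\epsilon)F}$, so the real content is the opposite parity $F\not\equiv\alpha\pmod2$, where no almost symmetric semigroup of type $\alpha$ exists and the single witness $s=F-x$ can no longer account for all non--pseudo-Frobenius gaps. Here I would introduce a \emph{second landing target}: fix one extra pseudo-Frobenius number $f<F$ and arrange the free region so that every free gap $x$ satisfies $F-x\in S$ or $f-x\in S$, so that some $s\in S_{>0}$ sends $x$ into $\{F,f\}$ and keeps it non--pseudo-Frobenius, while a fixed top gadget pins the pseudo-Frobenius set to a prescribed set of size $\alpha$ of the correct mixed parity. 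The main obstacle — and the reason the constant drops from $1/6$ to $0.1$ — is precisely this type control: with two targets the ``either choice is always safe'' feature of the first construction fails, so I must thin the free interval (enforcing monotonicity along the $+s$ chains, or spacing the free positions) to be sure that \emph{no} accidental pseudo-Frobenius number appears for \emph{any} admissible choice. Verifying that a free interval of the relative size giving $2^{(0.1-\epsilon)F}$ can still be retained while keeping the type rigidly equal to $\alpha$ is the crux; closure, $F(S)=F$, and injectivity of the parametrization are handled exactly as before.
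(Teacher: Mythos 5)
Your first construction is correct and is essentially the paper's own proof of the first inequality (Theorem \ref{construction AS type t} and Corollary \ref{AS lower bound}): your ``reflection-respecting'' choice on the pairs $\{x,F-x\}$ with $x$ ranging over roughly $(F/3,F/2)$, together with freezing the central pairs (and $F/2$ when $F$ is even) as gaps, is exactly the paper's family $S=\{0\}\cup A\cup B\cup\{F+1\rightarrow\}$, where $A$ is an arbitrary subset of $\left(\frac{F}{3},\lfloor\frac{F-1}{2}\rfloor-k\right)$ and $B$ is its reflection complement in the upper range; the frozen central block is the set of pseudo-Frobenius numbers besides $F$, closure holds because all nonzero elements exceed $F/3$, and the count of free pairs gives $2^{F/6-\alpha/2}$. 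Your reduction of the second inequality to the opposite-parity case $F\not\equiv\alpha\pmod 2$ is also valid.

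The gap is in the second inequality, and you flag it yourself: for the opposite-parity case you offer only a plan (``second landing target,'' a ``top gadget,'' thinning the free interval) and then state that verifying it ``is the crux.'' That verification is precisely the content of the second half of the theorem, so as written the proof is incomplete. For comparison, the paper's construction (Theorem \ref{construction type t}) does something concrete and rather different from your sketch: it keeps a \emph{single} landing target $F$ (reflection pairs in $(\beta F,\frac{F}{2})$ for an irrational $\beta$ slightly larger than $\frac{2}{5}$), and places the extra pseudo-Frobenius numbers not near $F/2$ but at the top of the gap region, at $\lfloor 2\beta F\rfloor-i$ for $0\leq i\leq k-1$, i.e.\ just below the minimum possible sum $2\beta F$ of two nonzero elements. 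These are automatically pseudo-Frobenius, since every nonzero $s>\beta F$ gives $\lfloor 2\beta F\rfloor-i+s>3\beta F-k>F$; and their reflections $F-(\lfloor 2\beta F\rfloor-i)$ are \emph{not} pseudo-Frobenius because of the explicit witness $s=2\lfloor 2\beta F\rfloor-F-i$, which lies in the forced block $\big[\lceil(1-\beta)F\rceil,\lfloor 2\beta F\rfloor-k\big]$ exactly when $(4\beta-1)F$ exceeds $(1-\beta)F$, i.e.\ when $\beta>\frac{2}{5}$. That constraint is where the exponent comes from: the free interval $(\beta F,\frac{F}{2})$ has about $(\frac{1}{2}-\beta)F$ integers, and $\frac{1}{2}-\frac{2}{5}=0.1$. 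Parity is handled by the same construction, which yields type $k+1$ for odd $F$ and $k+2$ for even $F$, with no mixed-parity gadget needed. Without a mechanism of this precision---in particular an argument that no admissible choice of the free set ever creates an accidental pseudo-Frobenius number---your bound $T(F,\alpha)>2^{(0.1-\epsilon)F}$ remains unproven.
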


\section{The T-Set of a numerical semigroup}\label{T-Set}

A numerical set is a subset of the natural numbers that contains $0$ and has a finite complement.
The genus (respectively Frobenius number) of a numerical set is defined to be the number of (respectively the largest) gaps.
The multiplicity of a numerical set is it's smallest non-zero element.
Every numerical set $T$ has an associated numerical semigroup given by $A(T)=\{x\mid \forall y\in T, x+y\in T\}$.
It is easily seen that $A(T)\subseteq T$ and they are equal if and only if $T$ is a numerical semigroup.

\begin{lemma}\label{f,g of T(S)}
For any numerical semigroup $S$, $F(T(S))=F(S)-m(S)$ and $g(T(S))=F(S)-g(S)$.
\end{lemma}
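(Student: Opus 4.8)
The plan is to unwind the definition of $T(S)$ directly and partition the natural numbers according to their position relative to $F(S)$. Write $F = F(S)$, $m = m(S)$, $g = g(S)$. By definition $x \in T(S)$ exactly when $F - x \in (\mathbb{Z}\setminus S)\cup\{0\}$, and I would first observe that this splits into three cases. If $x > F$ then $F - x < 0$, hence $F-x \in \mathbb{Z}\setminus S$ and $x \in T(S)$; thus $T(S)$ contains every integer exceeding $F$, and all gaps of $T(S)$ lie in $\{0,1,\dots,F\}$. If $x = F$ then $F - x = 0$, so $F \in T(S)$ as well, using precisely the $\{0\}$ term of the definition. It remains to understand $0 \le x < F$: here $F - x$ is a positive integer at most $F$, so $x \in T(S)$ iff $F-x$ is a gap of $S$, while $x \notin T(S)$ iff $F - x$ is a (necessarily positive) element of $S$.

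The key fact I want to record from this case analysis is a clean description of the gaps of $T(S)$: the map $y \mapsto F - y$ is a bijection between the set of gaps of $T(S)$ and the set $S \cap \{1,2,\dots,F\}$ of positive elements of $S$ not exceeding $F$. Indeed, every gap $y$ of $T(S)$ satisfies $0 \le y < F$ and corresponds to the positive semigroup element $F - y \le F$, and conversely each positive $s \in S$ with $s \le F$ yields the gap $y = F - s$ of $T(S)$.

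Given this bijection, both claims follow quickly. For the Frobenius number, the largest gap $y$ of $T(S)$ corresponds under $y \mapsto F - y$ to the smallest positive element of $S$, namely the multiplicity $m$; hence $F(T(S)) = F - m = F(S) - m(S)$. For the genus, the bijection gives $g(T(S)) = \lvert S \cap \{1,\dots,F\}\rvert$. Since $\{1,\dots,F\}$ is partitioned into the gaps of $S$ (all positive and at most $F$, hence exactly the $g$ of them) and the positive elements of $S$ at most $F$, we obtain $\lvert S \cap \{1,\dots,F\}\rvert = F - g$, so $g(T(S)) = F - g = F(S) - g(S)$.

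I do not anticipate a serious obstacle here: the argument is essentially bookkeeping once the membership criterion is unwound. The only points that need care are the boundary behaviour at $x = F$ and the role of the $\{0\}$ term, which together ensure $F \in T(S)$ and hence that $F$ is not itself a gap of $T(S)$; keeping track of strict versus non-strict inequalities when setting up the bijection is the main thing to get right.
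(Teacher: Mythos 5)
Your proof is correct and takes essentially the same route as the paper: the paper's entire proof is the one-line observation that the gaps of $T(S)$ are exactly $F(S)-s$ for $s\in S\cap[1,F(S)]$, which is precisely the bijection you establish. Your version simply spells out the case analysis and boundary details that the paper leaves implicit.
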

\begin{proof}
Notice that the gaps of $T(S)$ are $F(S)-s$ for $s$ in $S\cap [1, F(S)]$.
\end{proof}

\begin{theorem}\label{A of T of S}
Let $S$ be a numerical semigroup, then 
$$A(T(S))=S\cup PF(S).$$
\end{theorem}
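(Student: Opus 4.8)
The plan is to prove the equality by a double inclusion, using throughout the simple membership criterion that follows directly from the definition of the $T$-set: writing $F = F(S)$, a natural number $z$ lies in $T(S)$ if and only if $F - z \notin S \setminus \{0\}$ (that is, $F - z$ is either $0$, a gap, or negative). The whole argument is driven by this reflection $z \leftrightarrow F - z$, which converts statements about membership in $T(S)$ into statements about membership in $S$. As a first observation I record that $0 \in T(S)$, since $F - 0 = F$ is the Frobenius number and hence not in $S$; this will be used in the reverse inclusion.

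For the inclusion $S \cup PF(S) \subseteq A(T(S))$ I would treat the two pieces separately. Fix $y \in T(S)$, so $F - y \notin S \setminus \{0\}$. If $s \in S$ and we suppose $s + y \notin T(S)$, then $F - s - y$ is a nonzero element of $S$; adding $s \in S$ and using closure gives $F - y = (F - s - y) + s \in S \setminus \{0\}$, a contradiction. Hence $s + y \in T(S)$ for all $y$, so $s \in A(T(S))$. If instead $p \in PF(S)$ and $p + y \notin T(S)$, then $s' := F - p - y$ is a nonzero element of $S$, and the pseudo-Frobenius property gives $p + s' = F - y \in S$, again nonzero, contradicting $y \in T(S)$. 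This shows $PF(S) \subseteq A(T(S))$.

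For the reverse inclusion $A(T(S)) \subseteq S \cup PF(S)$, I would take $x \in A(T(S))$ and assume $x \notin S$; the goal is to show $x \in PF(S)$, i.e. that $x + s \in S$ for every nonzero $s \in S$. Arguing by contradiction, suppose $x + s \notin S$ for some nonzero $s \in S$. Since everything above $F$ lies in $S$, we must have $x + s \leq F$, so $y_0 := F - (x+s)$ is a natural number, and the criterion $F - y_0 = x+s \notin S$ shows $y_0 \in T(S)$. Because $x \in A(T(S))$, it follows that $x + y_0 \in T(S)$; but $x + y_0 = F - s$, whose reflection is $s \in S \setminus \{0\}$, so $x + y_0 \notin T(S)$ — a contradiction. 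Thus $x + s \in S$ for all nonzero $s \in S$, and since $x \notin S$ it is a gap, giving $x \in PF(S)$.

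The two inclusions together yield $A(T(S)) = S \cup PF(S)$. The step I expect to be the crux is the reverse inclusion: the nontrivial move is the choice of the probe element $y_0 = F - x - s$, which is manufactured precisely so that $x + y_0 = F - s$ reflects onto the offending element $s$. Once that choice is made the contradiction is immediate, and the only care needed is the boundary check that $x + s \leq F$ (guaranteed because any integer exceeding $F$ already lies in $S$), ensuring $y_0$ is a genuine element of the numerical set $T(S)$.
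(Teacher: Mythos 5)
Your proof is correct and takes essentially the same approach as the paper: both inclusions are driven by the reflection $z \mapsto F(S)-z$, and your key probe element $y_0 = F(S)-x-s$ is exactly the element the paper uses, the only difference being that the paper phrases each inclusion as a contrapositive while you argue directly by contradiction and split the $S$ and $PF(S)$ cases. This is an organizational difference, not a different method.
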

\begin{proof}
If $x$ is not in $S\cup PF(S)$, then there is a $y\in S$, $y\neq 0$ such that $x+y\not\in S$.
Then $F(S)-x-y\in T(S)$, $F(S)-y\not\in T(S)$ and hence $x\not\in A(T(S))$.

Now for the other direction, if $x$ is not in $A(T(S))$ then there is a $y\in T(S)$ such that $x+y\not\in T(S)$.
Then $F(S)-y\in ((\mathbb{Z}\setminus S)\cup \{0\})$, $F(S)-x-y\in (S\setminus \{0\})$.
It follows that $x\not\in S\cup PF(S)$.
\end{proof}
\begin{corollary}\label{genus of T of S}
$t(S)=g(S)-g(A(T(S))).$
\end{corollary}

\begin{proof}[Proof of Theorem \ref{AS Characterisation}]
By Corollary \ref{genus of T of S} and Lemma \ref{f,g of T(S)}
$$2g(S)-F(S)-t(S)
=g(S)-F(S)+g(A(T(S)))
=g(A(T(S)))-g(T(S)).$$
Therefore, $t(S)=2g(S)-F(S)$ if and only if $g(A(T(S)))=g(T(S))$ which happens if and only if $T(S)$ is a numerical semigroup.
\end{proof}

For a numerical semigroup $S$, the quantity $2g(S)-F(S)-t(S)$ measures how far $S$ is from being almost symmetric, this is equal to $g(A(T(S)))-g(T(S))$ which measures how far the $T$-set of $S$ is from being a numerical semigroup.

\begin{corollary}
A numerical semigroup $S$ is almost symmetric if and only if it satisfies the property that whenever $x,y\in Gap(S)$ and $x+y>F(S)$ we have $x+y-F(S)\in Gap(S)$.
\end{corollary}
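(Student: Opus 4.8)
The plan is to reduce everything to Theorem \ref{AS Characterisation}, which already tells us that $S$ is almost symmetric if and only if its $T$-set $T(S)$ is a numerical semigroup. Since $T(S)$ is by construction a numerical set (it contains $0$ and has a finite complement), being a numerical semigroup is equivalent to being closed under addition. So the whole corollary will follow once I translate the statement ``$T(S)$ is closed under addition'' into the claimed gap condition on $S$.

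First I would record how membership in $T(S)$ looks in terms of the gaps of $S$. Writing $F=F(S)$, the definition gives: every $x\ge F$ lies in $T(S)$ (because then $F-x\le 0$ lies in $(\mathbb{Z}\setminus S)\cup\{0\}$), while for $1\le x\le F-1$ we have $x\in T(S)$ if and only if $F-x\in Gap(S)$, since in that range $F-x\in[1,F-1]$ and the condition $F-x\in(\mathbb{Z}\setminus S)\cup\{0\}$ just says $F-x\notin S$. This dictionary is the heart of the argument.

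Next I would analyze exactly when closure can fail. Suppose $a,b\in T(S)$ but $a+b\notin T(S)$; the latter means $F-(a+b)$ is a nonzero element of $S$, which forces $a+b\le F-1$ and hence $1\le a,b\le F-1$ (if $a=0$ then $a+b=b\in T(S)$, a contradiction, and similarly for $b$). Applying the substitution $x=F-a$, $y=F-b$ together with the dictionary above turns ``$a,b\in T(S)$'' into ``$x,y\in Gap(S)$''; the inequality $a+b\le F-1$ becomes $x+y>F$; and the offending quantity $F-(a+b)$ becomes exactly $x+y-F$, which now lies in $S$. Thus a failure of closure is precisely a pair of gaps $x,y$ with $x+y>F$ and $x+y-F\notin Gap(S)$.

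Running this correspondence in both directions (it is a genuine bijection: from such a pair of gaps one recovers $a=F-x$ and $b=F-y$ in $T(S)$ with $a+b\notin T(S)$) shows that $T(S)$ is closed under addition if and only if every pair of gaps $x,y$ with $x+y>F$ satisfies $x+y-F\in Gap(S)$. Combined with Theorem \ref{AS Characterisation}, this yields the corollary. I do not expect a serious obstacle here; the only care needed is the bookkeeping of the boundary cases (such as $a+b\ge F$, or $a$ or $b$ equal to $0$ or $F$), where one must check that no genuine failure of closure occurs and that the corresponding instances of the gap condition, for example $x=y=F$ giving $x+y-F=F\in Gap(S)$, hold trivially.
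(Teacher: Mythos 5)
Your proposal is correct and follows essentially the same route as the paper: both invoke Theorem \ref{AS Characterisation} to reduce the statement to closure of $T(S)$ under addition, and then use the substitution $x=F(S)-a$, $y=F(S)-b$ to translate potential closure failures into pairs of gaps $x,y$ with $x+y>F(S)$ and $x+y-F(S)\notin Gap(S)$. Your write-up is merely more explicit about the boundary cases ($a=0$, $a+b\geq F(S)$, $x=y=F(S)$), which the paper handles implicitly by restricting to $a,b<F(S)$ and noting $F(T(S))=F(S)-m(S)<F(S)$.
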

\begin{proof}
Consider $a,b\in T(S)$, such that $a,b<F(S)$.
Say $a=F(S)-x$, $b=F(S)-y$, so $x,y\in Gap(S)$.
Now if $x+y\leq F(S)$ then $a+b=2F(S)-x-y\geq F(S)>F(T(S))$ and hence $a+b\in T(S)$.
On the other hand if $x+y>F(S)$ then $a+b\in T(S)$ if and only if $x+y-F(S)\in Gap(S)$.
\end{proof}

\section{Numerical Semigroups of type $F-\alpha$, $g-\alpha$}
In this section we will show that for a fixed $\alpha$ the functions $T(F,F-\alpha)$ and $L(g,g-\alpha)$ are both eventually constant.
We will do this by finding a bijection between numerical semigroups with Frobenius number $F$, type $F-\alpha$ and numerical sets $T$ with $g(T)+g(A(T))=\alpha$.
And for $L(g,g-\alpha)$ we find a bijection between numerical semigroups with genus $g$, type $g-\alpha$ and numerical sets $T$ with $g(A(T))=\alpha$.

We use the notation that for a set $X$, $a-X=\{a-x\mid x\in X\}$.

\begin{lemma}\label{Construct S}
Given a numerical set $T$ with $F(T)=f$ and given an integer $F$ larger than $2f$. Consider
$$S=(\mathbb{N}\setminus (F-T))\cup \{0\}$$
Then $S$ is a numerical semigroup with Frobenius number $F$, $T(S)=T$ and type $F-g(T)-g(A(T))$.
\end{lemma}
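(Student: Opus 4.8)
The plan is to pin down the gap set of $S$ explicitly, then verify the four assertions in turn: that $S$ is a numerical semigroup, that $F(S)=F$, that $T(S)=T$, and finally to read off the type from the two results already established.

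First I would describe $S$ concretely. Since $F-T=\{F-t\mid t\in T\}$ and $T$ contains every integer exceeding $f$, the set $F-T$ contains every integer $m<F-f$, contains $F$ itself (because $0\in T$), and for $F-f\le m\le F$ contains $m$ exactly when $F-m\in T$; it contains nothing above $F$. Removing $F-T$ from $\mathbb{N}$ and restoring $0$, the gaps of $S$ are therefore exactly $(F-T)\cap[1,F]$. In particular every integer greater than $F$ lies in $S$ while $F$ itself does not, giving $F(S)=F$; and every $m$ with $1\le m\le F-f-1$ is a gap, so the smallest nonzero element of $S$ is at least $F-f$.

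The crucial step is closure under addition, and this is where the hypothesis $F>2f$ enters. Take nonzero $a,b\in S$. If $a+b>F$ then $a+b\in S$, since $S$ contains everything above $F$. Otherwise $a+b\le F$ forces $a,b\le F$, so both are nonzero elements of $S$ bounded by $F$, hence $a,b\ge F-f$ by the previous paragraph; then $a+b\ge 2(F-f)=2F-2f>F$ by hypothesis, a contradiction. Thus no such pair exists, so $S$ is closed under addition and is a numerical semigroup. I expect this to be the only genuinely delicate point, though it is short once one observes that $F>2f$ is precisely the inequality needed to push every nontrivial sum of two nonzero elements past the Frobenius number.

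Next I would compute $T(S)$ directly from its definition, using $F(S)=F$. For $x>F$ the value $F-x$ is negative, hence not in $S$, so $x\in T(S)$; and $x\in T$ as well, since $x>f$. For $x=F$ we have $F-x=0\in(\mathbb{Z}\setminus S)\cup\{0\}$, so $F\in T(S)$, matching $F\in T$. For $0\le x\le F-1$, membership $x\in T(S)$ is equivalent to $F-x$ being a gap of $S$, i.e.\ $F-x\in(F-T)\cap[1,F]$, which is equivalent to $x\in T$. In every range this gives $x\in T(S)\iff x\in T$, so $T(S)=T$. Finally, for the type, Lemma \ref{f,g of T(S)} gives $g(T(S))=F(S)-g(S)$, hence $g(S)=F-g(T)$, and Corollary \ref{genus of T of S} gives $t(S)=g(S)-g(A(T(S)))=F-g(T)-g(A(T))$, completing the proof.
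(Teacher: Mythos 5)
Your proposal is correct and takes essentially the same approach as the paper: the identical closure argument (every nonzero element of $S$ that is at most $F$ must be at least $F-f>\frac{F}{2}$, so sums of nonzero elements land above $F$), followed by the same derivation of the type from Lemma \ref{f,g of T(S)} and Corollary \ref{genus of T of S}. The only difference is one of detail: you write out the gap set of $S$ and the case-by-case verification of $T(S)=T$, which the paper treats as immediate from the definitions.
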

\begin{proof}
It is clear that $S$ has Frobenius number $F$.
Next, we want to show that $S$ is closed under addition. Consider $x,y$ in $S$, assume that they are non-zero and $x,y<F$. Then $F-x, F-y$ are gaps of $T$ which implies $F-x, F-y\leq f$ and hence $x,y\geq F-f>\frac{F}{2}$. Therefore $x+y>F$, $x+y\in S$. Therefore $S$ is a numerical semigroup.

It follows from the definition of $T$-Set that $T(S)=T$. Corollary \ref{genus of T of S} and Lemma \ref{f,g of T(S)} imply that 
$$t(S)=g(S)-g(A(T))=F-g(T)-g(A(T)).$$
\end{proof}

\begin{theorem}\label{F- alpha}
Given positive integers $\alpha, F$ such that $F>4\alpha-6$. There is a bijection between the set of numerical semigroups with Frobenius number $F$, type $F-\alpha$ and the set of numerical sets $T$ for which $g(T)+g(A(T))=\alpha$.
\end{theorem}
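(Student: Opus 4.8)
The plan is to realise the bijection through the $T$-set construction, using $\Phi(S)=T(S)$ in one direction and the map $\Psi(T)=(\mathbb{N}\setminus(F-T))\cup\{0\}$ of Lemma \ref{Construct S} in the other. First I would check that $\Phi$ lands in the target set: if $S$ has Frobenius number $F$ and type $F-\alpha$, then combining Corollary \ref{genus of T of S} and Lemma \ref{f,g of T(S)} gives
$$g(T(S))+g(A(T(S)))=(F(S)-g(S))+(g(S)-t(S))=F-t(S)=\alpha,$$
so $T(S)$ is a numerical set of the required kind. Conversely, given a numerical set $T$ with $g(T)+g(A(T))=\alpha$, Lemma \ref{Construct S} tells me that $\Psi(T)$ is a numerical semigroup with Frobenius number $F$ and type $F-g(T)-g(A(T))=F-\alpha$, exactly as needed --- but only once I know the hypothesis $F>2F(T)$ of that lemma is satisfied.

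The crux is therefore to bound $F(T)$ in terms of $\alpha$, and I expect this to be the main obstacle since everything else is formal. Here I would argue as follows. Since $A(T)\subseteq T$, every gap of $T$ is a gap of $A(T)$, so $g(A(T))\geq g(T)$ and $F(A(T))\geq F(T)$. As $\alpha\geq 1$ forces $g(T)\geq 1$ (otherwise $T=\mathbb{N}$ and $\alpha=0$), we get $g(A(T))=\alpha-g(T)\leq\alpha-1$. Because $A(T)$ is a genuine numerical semigroup of positive genus, the standard inequality $F\leq 2g-1$ yields
$$F(T)\leq F(A(T))\leq 2g(A(T))-1\leq 2\alpha-3.$$
The hypothesis $F>4\alpha-6=2(2\alpha-3)$ then gives $F>2F(T)$, so Lemma \ref{Construct S} applies to every $T$ in the target set and $\Psi$ is well-defined.

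Finally I would verify that $\Phi$ and $\Psi$ are mutually inverse. One composite is immediate: Lemma \ref{Construct S} states $T(\Psi(T))=T$, that is $\Phi\circ\Psi=\mathrm{id}$. For the other I would unfold the definition of the $T$-set directly: writing $z=F-x$, the set $F-T(S)$ intersected with $\mathbb{N}$ is exactly $Gap(S)\cup\{0\}$, whence $(\mathbb{N}\setminus(F-T(S)))\cup\{0\}=S$, i.e. $\Psi(\Phi(S))=S$. (Alternatively, the bound above shows $F(T(S))\leq 2\alpha-3<F/2$, so Lemma \ref{Construct S} applies to $T(S)$ and the claim follows from the injectivity of $S\mapsto T(S)$ among numerical semigroups of a fixed Frobenius number, which holds because $T(S)\cap[0,F]$ recovers $Gap(S)$.) This exhibits $\Phi$ and $\Psi$ as inverse bijections and completes the proof.
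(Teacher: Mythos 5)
Your proposal is correct and follows essentially the same route as the paper: the map $S\mapsto T(S)$, the bound $F(T)\leq F(A(T))\leq 2g(A(T))-1\leq 2\alpha-3$ (forcing $2F(T)<F$), and Lemma \ref{Construct S} to produce the inverse. The only difference is presentational --- you verify that $\Phi$ and $\Psi$ are two-sided inverses explicitly, while the paper argues injectivity (from the fixed Frobenius number) plus surjectivity (from Lemma \ref{Construct S}); these amount to the same computations.
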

\begin{proof}
Given a numerical semigroup of Frobenius number $F$ and type $F-\alpha$. We know by Corollary \ref{genus of T of S} and Lemma \ref{f,g of T(S)} that,
$$\alpha=F-t(S)
=F-(g(S)-g(A(T(S))))
=g(T(S))+g(A(T(S))).$$
Since the Frobenius number $F$ is fixed, the map $T(S)$ is injective.

Next, we need to show that it is surjective. Consider a numerical set $T$ with $g(T)+g(A(T))=\alpha$. Then $g(A(T))\leq\alpha-1$ and hence 
$$F(T)=F(A(T))\leq 2g(A(T))-1\leq 2\alpha-3.$$
Therefore $2F(T)\leq 4\alpha-6<F$, and hence Lemma \ref{Construct S} gives a numerical semigroup $S$ with Frobenius number $F$, type $F-\alpha$ and $T(S)=T$.
It follows that the map is surjective and hence bijective.
\end{proof}

\begin{corollary}\label{T_1}
(Proved in \cite{Almost symmetric})
Given positive integers $F,\beta$ such that $F>8\beta-6$. There is a bijection between the set of almost symmetric numerical semigroups with Frobenius number $F$ and type $F-2\beta$ and the number of numerical semigroups with genus $\beta$.
\end{corollary}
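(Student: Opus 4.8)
The plan is to specialise Theorem \ref{F- alpha} to the value $\alpha=2\beta$ and then cut the resulting bijection down to its almost symmetric part. First I would note that the hypothesis $F>8\beta-6$ is precisely the hypothesis $F>4\alpha-6$ of Theorem \ref{F- alpha} in the case $\alpha=2\beta$. Hence that theorem already supplies a bijection $S\mapsto T(S)$ between the numerical semigroups with Frobenius number $F$ and type $F-2\beta$ on one side, and the numerical sets $T$ satisfying $g(T)+g(A(T))=2\beta$ on the other. So the whole task reduces to identifying which semigroups on the left, and which numerical sets on the right, correspond to each other under this map, once we impose that $S$ be almost symmetric.

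The key step is Theorem \ref{AS Characterisation}, which says that $S$ is almost symmetric if and only if $T(S)$ is itself a numerical semigroup. Under the bijection of Theorem \ref{F- alpha} the set $T$ is exactly $T(S)$, so restricting to almost symmetric $S$ corresponds on the other side to restricting to those numerical sets $T$ that are themselves numerical semigroups. For such $T$ we have $A(T)=T$, since $A(T)\subseteq T$ with equality exactly when $T$ is a numerical semigroup, and therefore $g(A(T))=g(T)$. The constraint $g(T)+g(A(T))=2\beta$ then collapses to $2g(T)=2\beta$, i.e.\ $g(T)=\beta$. Conversely, any numerical semigroup $T$ of genus $\beta$ satisfies $g(T)+g(A(T))=2\beta$ and is a numerical set whose $A$ equals itself, so the corresponding $S$ produced by Lemma \ref{Construct S} is almost symmetric of type $F-2\beta$.

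Putting these together, the bijection of Theorem \ref{F- alpha} restricts to a bijection between almost symmetric numerical semigroups with Frobenius number $F$ and type $F-2\beta$ and numerical semigroups of genus $\beta$, which is the claim. There is no serious obstacle here: the only point requiring care is to verify that the almost symmetric condition on the left translates cleanly, under the already-established bijection, into the condition that $T$ be a numerical semigroup on the right, and this is exactly the content of Theorem \ref{AS Characterisation}. The numerical bound $F>8\beta-6$ requires no separate argument, being inherited directly from Theorem \ref{F- alpha}.
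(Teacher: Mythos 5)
Your proposal is correct and follows exactly the paper's own route: specialise Theorem \ref{F- alpha} at $\alpha=2\beta$ and restrict the bijection using Theorem \ref{AS Characterisation}, noting that for a numerical semigroup $T$ one has $A(T)=T$ so the constraint $g(T)+g(A(T))=2\beta$ becomes $g(T)=\beta$. The paper states this in one line; you have merely filled in the same details explicitly.
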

\begin{proof}
Let $\alpha=2\beta$, restrict the map to almost symmetric numerical semigroups and use the fact that $S$ is almost symmetric if and only if $T(S)$ is a numerical semigroup (Theorem \ref{AS Characterisation}).
\end{proof}

Next we will obtain a similar bijection for the collection of numerical semigroups with genus $g$ and type $g-\alpha$. But first we need a result from \cite{My SDSU}. Given a numerical semigroup $S$ they define
$$B(S)=\{x\mid x\not\in S, F(S)-x\not\in S\}.$$
Notice that $|B(S)|=2g(S)-F(S)-1$.

\begin{theorem}\label{TBUS}
Given a numerical semigroup $S$, numerical set $T$ such that $A(T)=S$, $T$ must satisfy
$$S\subseteq T\subseteq S\cup B(S).$$
\end{theorem}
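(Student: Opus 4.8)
The plan is to prove both containments directly from the definition of $A(T)$, with the key preliminary fact being that a numerical set $T$ and its associated semigroup $S=A(T)$ share the same Frobenius number. Once that is in hand, both inclusions follow from the defining closure property of $A(T)$ together with the fact that $F(S)\notin T$.

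First I would record that $F(A(T))=F(T)$. Since $A(T)\subseteq T$, every gap of $T$ is a gap of $A(T)$, which gives $F(A(T))\ge F(T)$. Conversely, any integer $x>F(T)$ already lies in $T$, and for every $y\in T$ we have $x+y>F(T)$, hence $x+y\in T$; thus $x\in A(T)$, so $F(A(T))\le F(T)$. Therefore $F(A(T))=F(T)$, and in our setting $F(S)=F(T)$. In particular $F(S)\notin T$, which is the observation that powers the rest of the argument. The left containment $S\subseteq T$ is then immediate, since $A(T)\subseteq T$ holds for every numerical set.

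For the right containment I would take $x\in T$ and assume $x\notin S$, aiming to show $x\in B(S)$, i.e. that $F(S)-x\notin S$. Because $S\subseteq T$, such an $x$ is a gap of $S$ but not a gap of $T$, so $1\le x\le F(S)-1$ (here $x\ne F(S)$ precisely because $F(S)\notin T$), and in particular $F(S)-x$ is a well-defined positive integer. Suppose toward a contradiction that $F(S)-x\in S=A(T)$. Then the defining property of $A(T)$, applied to the element $x\in T$, yields $(F(S)-x)+x=F(S)\in T$, contradicting $F(S)=F(T)\notin T$. Hence $F(S)-x\notin S$, so $x\in B(S)$, and therefore $T\subseteq S\cup B(S)$.

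The argument is short, and I do not anticipate a genuine obstacle: the one piece of real content—and the step I would pin down first—is the equality $F(S)=F(T)$, since it is exactly what converts the assumed membership $F(S)-x\in A(T)$ into the forbidden statement $F(S)\in T$. Everything else is a direct unwinding of the definitions of $A(T)$ and $B(S)$.
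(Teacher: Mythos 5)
Your proof is correct, and it is worth noting that the paper itself gives no argument for this theorem at all: its ``proof'' is a bare citation to an unpublished manuscript (\cite{My SDSU}), so your write-up supplies a self-contained proof where the paper defers. Your argument is sound at every step: the preliminary equality $F(A(T))=F(T)$ follows exactly as you say (gaps of $T$ are gaps of $A(T)$ since $A(T)\subseteq T$, and any $x>F(T)$ satisfies $x+y>F(T)$ for all $y\in T$, hence lies in $A(T)$); the left containment is the standard fact $A(T)\subseteq T$ coming from $0\in T$; and the right containment is clinched by the observation that if $x\in T\setminus S$ and $F(S)-x$ were in $S=A(T)$, then applying the defining property of $A(T)$ to the element $x\in T$ would force $F(S)\in T$, contradicting $F(S)=F(T)\notin T$. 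This is precisely the kind of short, definition-unwinding argument one would expect the cited manuscript to contain, and identifying $F(S)=F(T)$ as the single piece of real content is the right diagnosis: it is what turns the closure property of $A(T)$ into the needed contradiction.
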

\begin{proof}
See \cite{My SDSU}.
\end{proof}
\begin{corollary}\label{cor to TBUS}
For any numerical set $T$
$$g(T)\geq F(T)-g(A(T))+1$$
\end{corollary}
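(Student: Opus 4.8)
The plan is to derive the inequality $g(T)\geq F(T)-g(A(T))+1$ directly from the containment relation established in Theorem \ref{TBUS}. Let $T$ be an arbitrary numerical set, and set $S=A(T)$. By Theorem \ref{TBUS} we know that $S\subseteq T\subseteq S\cup B(S)$, so the gaps of $T$ are a subset of the gaps of $S$; more precisely, $\mathrm{Gap}(T)=\mathrm{Gap}(S)\setminus(T\cap B(S))$. The key observation I would exploit is that $F(T)=F(S)=F(A(T))$: since $S\subseteq T$, every gap of $T$ is a gap of $S$, and conversely the Frobenius number $F(S)$ cannot lie in $T$ because $F(S)\in B(S)$ only if $F(S)-F(S)=0\notin S$, which is false, so in fact one must argue that the largest gap is preserved. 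I would establish $F(T)=F(A(T))$ first, which is already asserted in the proof of Theorem \ref{F- alpha} and follows because $B(S)\subseteq[0,F(S)-1]$ (every $x\in B(S)$ has $F(S)-x\notin S$, forcing $x<F(S)$), so adjoining elements of $B(S)$ to $S$ never removes the gap $F(S)$.

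With $F(T)=F(S)$ in hand, I would count gaps. Writing $f=F(T)=F(S)$, the gaps of $S$ all lie in $[1,f]$, so $g(S)\leq f$; combined with $g(T)=g(S)-|T\cap B(S)|\geq g(S)-|B(S)|$, I would translate everything into the quantity $g(A(T))=g(S)$. The cleanest route is to bound $g(T)$ below using $|B(S)|=2g(S)-F(S)-1$, which the excerpt records just before Theorem \ref{TBUS}. Since $T\subseteq S\cup B(S)$, the number of elements of $B(S)$ that $T$ may have absorbed is at most $|B(S)|$, giving
\[
g(T)=g(S)-|T\cap B(S)|\geq g(S)-|B(S)|=g(S)-(2g(S)-F(S)-1)=F(S)-g(S)+1.
\]
Substituting $F(S)=F(T)$ and $g(S)=g(A(T))$ yields exactly $g(T)\geq F(T)-g(A(T))+1$.

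The main obstacle I anticipate is not the arithmetic, which is a one-line substitution, but rather justifying cleanly that $F(T)=F(A(T))$ and that the count $g(T)=g(A(T))-|T\cap B(S)|$ is exact. Both hinge on understanding the structure of $B(S)$: I must confirm that $B(S)$ consists entirely of gaps of $S$ lying strictly below $F(S)$, so that $T\setminus S\subseteq B(S)\subseteq\mathrm{Gap}(S)\cap[1,F(S)-1]$. Granting this, every element adjoined to $S$ to form $T$ is a former gap of $S$ below the Frobenius number, so it neither changes the Frobenius number nor contributes a new gap, and the gap count decreases by exactly the number of such absorbed elements. Since that number is at most $|B(S)|=2g(A(T))-F(A(T))-1$, the stated lower bound on $g(T)$ follows immediately.
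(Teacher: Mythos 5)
Your proposal is correct and follows essentially the same route as the paper: apply Theorem \ref{TBUS} to get $g(T)\geq g(A(T))-|B(A(T))|$, substitute $|B(A(T))|=2g(A(T))-F(A(T))-1$, and use $F(T)=F(A(T))$. The only difference is that you spell out the justifications (that $B(S)\subseteq[1,F(S)-1]$, hence $F(T)=F(A(T))$ and the gap count is exact) which the paper leaves implicit, and these details are right.
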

\begin{proof}
Theorem \ref{TBUS} implies that
$$g(T)\geq g(A(T))-|B(A(T))|=g(A(T))-(2g(A(T))-F(A(T))-1).$$
Also $F(T)=F(A(T))$, so we are done.
\end{proof}

\begin{theorem}\label{g- alpha}
Given positive integers $\alpha, g$ such that $g\geq 3\alpha-1$. There is a bijection between the set of numerical semigroups with genus $g$, type $g-\alpha$ and the set of numerical sets $T$ for which $g(A(T))=\alpha$.
\end{theorem}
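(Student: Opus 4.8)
The plan is to mirror the structure of the proof of Theorem \ref{F- alpha}, replacing the Frobenius-number invariant with the genus invariant. The key identity to exploit is Corollary \ref{genus of T of S}, which gives $t(S)=g(S)-g(A(T(S)))$. Thus a numerical semigroup $S$ with $g(S)=g$ and $t(S)=g-\alpha$ satisfies exactly $g(A(T(S)))=\alpha$. So the map $S\mapsto T(S)$ lands in the target set of numerical sets $T$ with $g(A(T))=\alpha$. The work is to show this map is a bijection when $g\geq 3\alpha-1$.

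First I would address injectivity. Unlike the Frobenius-number case, here we do not fix $F$, so recovering $S$ from $T(S)$ is not immediate from the definition of the $T$-set; I would instead observe that $S$ is determined by $T(S)$ together with $F(S)$, and that $F(S)$ can be read off from $T(S)$ via Lemma \ref{f,g of T(S)} once $g(S)=g$ is known: indeed $g(T(S))=F(S)-g(S)$ gives $F(S)=g(T(S))+g$. Since the genus $g$ is fixed throughout, both $F(S)$ and hence $S=(\mathbb{N}\setminus(F(S)-T(S)))\cup\{0\}$ are recovered, so the map is injective.

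Next comes surjectivity, which I expect to be the main obstacle and the reason for the hypothesis $g\geq 3\alpha-1$. Given a numerical set $T$ with $g(A(T))=\alpha$, I want to produce a numerical semigroup $S$ of genus $g$ with $T(S)=T$. The natural candidate is the $S$ built by Lemma \ref{Construct S} for an appropriate choice of $F$; by that lemma, if $F>2f$ where $f=F(T)$, then $S$ is a numerical semigroup with $F(S)=F$ and $T(S)=T$, and its genus is $g(S)=g(T)+g(A(T))$ by Lemma \ref{f,g of T(S)} (since $g(S)=F-g(T(S))=F-(F-g(S))$, more directly $g(S)=g(T)+g(A(T))$ follows from combining $t(S)=F-g(T)-g(A(T))$ with $t(S)=g(S)-g(A(T))$). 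To force $g(S)=g$ I must pick $F$ so that $g(T)=g-\alpha$; but $g(T)$ is not free to choose since $T$ is fixed, so instead I should set $F$ from the genus equation $F=g+g(T)$ and then verify the constructibility constraint $F>2f$. The crucial inequality is the lower bound on $g(T)$ relative to $f$: Corollary \ref{cor to TBUS} gives $g(T)\geq f-g(A(T))+1=f-\alpha+1$, so $f\leq g(T)+\alpha-1$.

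Finally I would close the inequality. Using $F=g+g(T)$ and the bound $f\leq g(T)+\alpha-1$, the condition $F>2f$ becomes $g+g(T)>2g(T)+2\alpha-2$, i.e. $g>g(T)+2\alpha-2$. Since the constructed semigroup must have genus $g$ and $g(S)=g(T)+\alpha$, we have $g(T)=g-\alpha$, so this reduces to $g>(g-\alpha)+2\alpha-2=g+\alpha-2$, which is false for $\alpha\geq 2$; hence the naive choice needs care and the correct framing is to verify that the $S$ produced by Lemma \ref{Construct S} with $F=g+g(T)$ genuinely has the right genus and that $F>2f$ holds under $g\geq 3\alpha-1$. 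Concretely, with $g(T)=g-\alpha$ and $f\leq g(T)+\alpha-1=g-1$, the requirement $F=g+g(T)=2g-\alpha>2f$ follows once $2g-\alpha>2(g-1)$, i.e. $\alpha<2$, so the tighter estimate $f\leq 2g(A(T))-1=2\alpha-1$ from $F(A(T))\leq 2g(A(T))-1$ (as used in Theorem \ref{F- alpha}) should be invoked instead: then $F=2g-\alpha>2f=4\alpha-2$ reduces to $g\geq 3\alpha-1$, giving exactly the stated hypothesis. With $F>2f$ secured, Lemma \ref{Construct S} yields the desired preimage $S$, establishing surjectivity and completing the bijection.
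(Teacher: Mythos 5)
Your overall strategy is the same as the paper's: injectivity by recovering $F(S)=g+g(T(S))$ from Lemma \ref{f,g of T(S)}, and surjectivity by applying Lemma \ref{Construct S} with $F=g+g(T)$, invoking Corollary \ref{cor to TBUS} and the bound $F(T)=F(A(T))\leq 2g(A(T))-1=2\alpha-1$. The injectivity half is correct. However, the surjectivity verification contains a genuine error. You assert that the constructed semigroup satisfies $g(S)=g(T)+g(A(T))$, and from this deduce $g(T)=g-\alpha$ and $F=2g-\alpha$. This is an algebra slip: combining $t(S)=F-g(T)-g(A(T))$ with $t(S)=g(S)-g(A(T))$ gives $g(S)=F-g(T)$, not $g(S)=g(T)+g(A(T))$. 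The claim $g(T)=g-\alpha$ is in fact impossible under the hypotheses for $\alpha\geq 2$: since $A(T)\subseteq T$, every gap of $T$ is a gap of $A(T)$, so $g(T)\leq g(A(T))=\alpha$, while $g-\alpha\geq 2\alpha-1>\alpha$. (Concretely, for $\alpha=2$, $g=5$, $T=\{0,3,4,5,\dots\}$ one has $g(T)=2\neq 3$ and $F=g+g(T)=7$, not $2g-\alpha=8$.) Consequently your final chain $F=2g-\alpha>4\alpha-2\geq 2f$ verifies $F>2f$ for a number $F$ that is not the Frobenius number you are actually constructing with; the contradiction you noticed midway ($g>g+\alpha-2$ being false) was a symptom of this false identity, and switching to the tighter estimate $f\leq 2\alpha-1$ while keeping $F=2g-\alpha$ does not repair it.

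The gap is closed by combining the two bounds you already quote, with no identity for $g(T)$ at all; this is exactly what the paper does. With $F=g+g(T)$, Corollary \ref{cor to TBUS} gives $g(T)\geq F(T)-\alpha+1$, so
$$F=g+g(T)\geq (3\alpha-1)+(F(T)-\alpha+1)=F(T)+2\alpha\geq F(T)+F(T)+1>2F(T),$$
where the last step uses $F(T)\leq 2\alpha-1$. (Alternatively, Corollary \ref{cor to TBUS} together with the trivial bound $g(T)\leq g(A(T))=\alpha$ also suffices: $g+g(T)>2g(T)+2\alpha-2$ reduces to $g>g(T)+2\alpha-2$, which holds since $g\geq 3\alpha-1>3\alpha-2\geq g(T)+2\alpha-2$.) Once $F>2F(T)$ is secured this way, the rest of your argument goes through verbatim: Lemma \ref{Construct S} yields $S$ with $T(S)=T$, $g(S)=F-g(T)=g$, and $t(S)=g(S)-g(A(T))=g-\alpha$.
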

\begin{proof}
Given a numerical semigroup $S$ with genus $g$ and type $g-\alpha$, we know by Corollary \ref{genus of T of S} that $g(A(T(S)))=\alpha$.
We first show that this map is injective. Given two such $S_1,S_2$, if $T(S_1)=T(S_2)$ by Lemma \ref{f,g of T(S)} we have
$$F(S_1)=g(S_1)+g(T(S_1))
=g(S_2)+g(T(S_2))=F(S_2).$$
Since $S_1,S_2$ have the same Frobenius number and $T$-Sets it follows that $S_1=S_2$.

Next we show that the map is surjective. Consider a numerical set $T$ such that $g(A(T))=\alpha$. Let $F=g+g(T)$. We know that
$$F(T)=F(A(T))\leq 2g(A(T))-1=2\alpha-1.$$
Therefore by Corollary \ref{cor to TBUS}
$$F\geq 3\alpha-1+g(T)
\geq 3\alpha-1+F(T)-\alpha+1
= F(T)+2\alpha>2F(T).$$
Now Lemma \ref{Construct S} gives us a numerical semigroup $S$ with Frobenius number $F$ and $T(S)=T$. Finally lemma \ref{f,g of T(S)} implies that $g(S)=F-g(T)=g$ and Corollary \ref{genus of T of S} implies that $t(S)=g-g(A(T))=g-\alpha$. Therefore, the map is surjective and hence bijective.
\end{proof}

\section{The type and $T$-Set of a numerical set}\label{Sec: T set of numerical set}

In this section we will define the $T$-Set for a numerical set and study it's properties proving several interesting results like Theorem \ref{sec 4 1}, Theorem \ref{A star till k almost sym} and Theorem \ref{eventually constant type} on the way.
One of the ways of obtaining a numerical semigroup from a numerical set is the associated semigroup $A(T)$.
There is another useful way of obtaining a numerical semigroup from a numerical set.
Given a numerical set $H\neq \mathbb{N}$ we define
$$A^{\star}(H)=\{a\mid a+(H\setminus\{0\})\subseteq H\}$$
We make a note of some observations, given a numerical set $H\neq \mathbb{N}$
\begin{itemize}
    \item $A^{\star}(H)$ is a numerical semigroup
    \item $A(H)= A^{\star}(H)\cap H$
    \item If $H$ is itself a numerical semigroup then $A^{\star}(H)=H\cup PF(H)$
\end{itemize}
We now define the pseudo-Frobenius numbers and type for numerical sets analogously to numerical semigroups.
Our definition of type of a numerical set is different from the one given in \cite{Type 2}.
For a numerical set $H$ we define
$$PF(H)=\{a\in\mathbb{N}\mid a\not\in H, a+(H\setminus\{0\})\subseteq H\}$$
and the type of $H$ is $t(H)=|PF(H)|$.
It is clear that $F(H)\in PF(H)$, so $t(H)\geq 1$. Also
$$A^{\star}(H)=PF(H)\dot{\cup}A(H).$$

\begin{lemma}
For any numerical set $H$, $PF(H)\subseteq PF(A(H))$ and therefore $t(H)\leq t(A(H))$.
\end{lemma}
\begin{proof}
Given $P\in PF(H)$, we know that $P\notin H$ therefore $P\notin A(H)$. If $x\in A(H)$, $x\neq 0$ then we want to show that $P+x\in A(H)$. Pick $h\in H$, if $h\neq 0$ then $P+h\in H$ and hence $P+h+x\in H$. On the other hand if $h=0$, then $P+x\in H$ as $x\in H\setminus\{0\}$. Therefore $P+x\in A(H)$, which in turn implies $P\in PF(A(H))$.
\end{proof}

\begin{definition}
Let $H\neq \mathbb{N}$ be a numerical set with Frobenius number $F$, then it's $T$-Set is defined to be the following numerical set
$$T(H)=\{x\in\mathbb{N}| F-x\in ((\mathbb{Z}\setminus H)\cup \{0\})\}.$$
\end{definition}

We now find the analogues of the results from Section \ref{T-Set}.

\begin{lemma}\label{F,g of T(H)}
If $H\neq\mathbb{N}$ is a numerical set then $F(T(H))=F(H)-m(H)$, $g(T(H))=F(H)-g(H)$.
\end{lemma}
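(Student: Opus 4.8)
The plan is to treat this exactly as the numerical-semigroup case of Lemma \ref{f,g of T(S)}: the whole statement falls out once we identify precisely which natural numbers fail to lie in $T(H)$. Write $F=F(H)$. The only structural difference from the semigroup setting is that we are now working with an arbitrary numerical set, so I must be slightly careful that the clause $F-x\in(\mathbb{Z}\setminus H)\cup\{0\}$ is read with $\mathbb{Z}\setminus H$ including the negative integers; this is what makes every $x>F$ automatically lie in $T(H)$ and keeps the complement of $T(H)$ finite.

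First I would unwind the definition to locate the gaps of $T(H)$. By definition $x\in T(H)$ iff $F-x\in(\mathbb{Z}\setminus H)\cup\{0\}$, so $x$ is a gap of $T(H)$ exactly when $F-x$ is a nonzero element of $H$, i.e.\ exactly when $F-x\in H\cap[1,F]$. Hence the assignment $s\mapsto F-s$ is a bijection from $H\cap[1,F]$ onto the set of gaps of $T(H)$. This single observation is the analogue of the one-line remark proving Lemma \ref{f,g of T(S)}, and it is really the only content of the argument.

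From this bijection both formulas are immediate counting statements. For the genus, note that $0\in H$ and that all $g(H)$ gaps of $H$ lie in $[1,F]$; since $[1,F]$ contains $F$ integers, exactly $F-g(H)$ of them belong to $H$, so $g(T(H))=|H\cap[1,F]|=F-g(H)$. For the Frobenius number, the largest gap of $T(H)$ corresponds under $s\mapsto F-s$ to the smallest element of $H\cap[1,F]$, which is the multiplicity $m(H)$; therefore $F(T(H))=F-m(H)$.

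There is essentially no hard step, but the one point that needs a word is the degenerate case $H\cap[1,F]=\varnothing$, i.e.\ $H=\{0\}\cup\{F+1,F+2,\dots\}$ and $m(H)=F+1$. Here $T(H)$ has no gaps, so $T(H)=\mathbb{N}$, and both identities continue to hold under the usual conventions $g(\mathbb{N})=0$ and $F(\mathbb{N})=-1$ (indeed $g(H)=F$ and $m(H)=F+1$ in this case). Apart from checking this boundary behaviour, the proof is the same short computation as for numerical semigroups.
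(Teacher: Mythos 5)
Your proof is correct and takes the same approach as the paper: the paper's entire proof is the one-line observation that the gaps of $T(H)$ are precisely $F(H)-x$ for $x\in H\cap[1,F(H)]$, from which both formulas follow by counting, exactly as you spell out. Your explicit check of the ordinary case $H\cap[1,F]=\varnothing$ is a reasonable extra precaution but does not change the argument.
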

\begin{proof}
Note that the gaps of $T(H)$ are precisely $F(H)-x$ for $x$ in $H\cap [1,F(H)]$.
\end{proof}

\begin{proposition}
For a numerical set $H\neq \mathbb{N}$, $A(T(H))=A^{\star}(H)$.
\end{proposition}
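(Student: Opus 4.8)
The plan is to replay the proof of Theorem \ref{A of T of S}, which is exactly the special case where $H$ is a numerical semigroup, now in the setting of an arbitrary numerical set $H\neq\mathbb{N}$. As there, I would establish the equality by proving the two inclusions separately, each by contraposition, exploiting the order-reversing involution $z\mapsto F-z$ (with $F=F(H)$) that converts the membership condition defining $T(H)$ into a statement about $H$ itself. Throughout, recall that $x\in A^{\star}(H)$ means $x+h\in H$ for every $h\in H\setminus\{0\}$, that $x\in A(T(H))$ means $x+z\in T(H)$ for every $z\in T(H)$, and that $z\in T(H)$ exactly when $F-z\in((\mathbb{Z}\setminus H)\cup\{0\})$.

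For the inclusion $A(T(H))\subseteq A^{\star}(H)$, I would take $x\notin A^{\star}(H)$ and produce an $h\in H\setminus\{0\}$ with $x+h\notin H$. Since the complement of $H$ lies in $[1,F]$, the element $x+h$ is a gap, so $z:=F-x-h$ is a nonnegative integer lying in $T(H)$; meanwhile $x+z=F-h$ satisfies $F-(x+z)=h\in H\setminus\{0\}$, so $x+z\notin T(H)$. This exhibits a $z\in T(H)$ with $x+z\notin T(H)$, that is, $x\notin A(T(H))$.

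For the reverse inclusion $A^{\star}(H)\subseteq A(T(H))$, the case $x=0$ is immediate since $0$ lies in both sets, so I would assume $x\neq 0$ and $x\notin A(T(H))$, obtaining some $z\in T(H)$ with $x+z\notin T(H)$. Unwinding the definitions gives $F-z\in((\mathbb{Z}\setminus H)\cup\{0\})$, while $h:=F-x-z$ lies in $H\setminus\{0\}$; then $x+h=F-z\notin H$ shows $x\notin A^{\star}(H)$.

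The only real care needed, exactly as in Theorem \ref{A of T of S}, is in the boundary cases involving $0$ and $F$: in the first inclusion one must confirm $z=F-x-h\geq 0$, which is where the bound $x+h\leq F$ on gaps is used, and in the second inclusion one must rule out $F-z=0$ (which would force the spurious $h=-x<0$), which is precisely why the case $x=0$ is split off at the outset, after which $x\geq 1$ and $h\geq 1$ force $F-z=x+h\geq 2$. I expect these bookkeeping checks, rather than any conceptual difficulty, to be the main thing to get right, since the substance of the argument is the same involution already used in Section \ref{T-Set}.
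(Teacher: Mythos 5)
Your proof is correct and takes essentially the same approach as the paper: the paper establishes the equality as a single chain of equivalences, unwinding $a\notin A(T(H))$ via the substitution $y=F(H)-x-a$ into the statement $a\notin A^{\star}(H)$, which is exactly the involution argument you carry out as two contrapositive inclusions. The boundary points you flag are handled implicitly there (in particular, $F(H)-z=0$ is already ruled out because $h=F(H)-x-z\in H\setminus\{0\}$ forces $F(H)-z=x+h\geq 1$, so your case split at $x=0$ is harmless but unnecessary).
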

\begin{proof}
Pick a natural number $a$.
We know that $a$ is not in $A(T(H))$ if and only if there is a $x\in T(H)$ such that $x+a\not\in T(H)$. This means there is an $x$ such that $F(H)-x\in (\mathbb{Z}\setminus H)\cup \{0\}$ and $F(H)-x-a\in H\setminus\{0\}$. This is equivalent to saying that there is a $y\in H\setminus\{0\}$ such that $y+a\not\in H$ i.e. $a\not\in A^{\star}(H)$.
It follows that $A(T(H))=A^{\star}(H)$.
\end{proof}
\begin{corollary}\label{t(H))}
$t(H)=g(A(H)))-g(A(T(H)))$.
\end{corollary}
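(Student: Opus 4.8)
The plan is to reduce the claim to a counting statement about nested gap sets, exactly mirroring the argument behind Corollary \ref{genus of T of S}. By the Proposition just established we have $A(T(H))=A^{\star}(H)$, so the right-hand side $g(A(H))-g(A(T(H)))$ equals $g(A(H))-g(A^{\star}(H))$. Thus it suffices to prove the purely intrinsic identity $t(H)=g(A(H))-g(A^{\star}(H))$.

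First I would record the containment $A(H)\subseteq A^{\star}(H)$, which is immediate from the observation $A(H)=A^{\star}(H)\cap H$. Since both $A(H)$ and $A^{\star}(H)$ are numerical semigroups, their complements in $\mathbb{N}$ are finite, and the containment partitions the larger complement:
$$\mathbb{N}\setminus A(H)=\big(\mathbb{N}\setminus A^{\star}(H)\big)\,\dot{\cup}\,\big(A^{\star}(H)\setminus A(H)\big).$$
Taking cardinalities gives $g(A(H))=g(A^{\star}(H))+|A^{\star}(H)\setminus A(H)|$, so the whole corollary comes down to identifying $|A^{\star}(H)\setminus A(H)|$ with $t(H)$.

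For that final step I would invoke the disjoint decomposition $A^{\star}(H)=PF(H)\,\dot{\cup}\,A(H)$ noted earlier, which shows $A^{\star}(H)\setminus A(H)=PF(H)$ and hence $|A^{\star}(H)\setminus A(H)|=|PF(H)|=t(H)$. Substituting into the previous display yields $g(A(H))-g(A^{\star}(H))=t(H)$, and rewriting $g(A^{\star}(H))$ as $g(A(T(H)))$ via the Proposition completes the proof.

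I expect no serious obstacle; the argument is a direct transcription of the numerical-semigroup case to numerical sets. The only points needing care are that $A^{\star}(H)$ is genuinely cofinite so that $g(A^{\star}(H))$ is defined, which is covered by the observation that $A^{\star}(H)$ is a numerical semigroup, and that the union $A^{\star}(H)=PF(H)\,\dot{\cup}\,A(H)$ is truly disjoint, which holds because every element of $PF(H)$ is by definition outside $H$ and hence outside $A(H)\subseteq H$.
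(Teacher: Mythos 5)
Your proof is correct and follows exactly the route the paper intends: the corollary is immediate from the Proposition $A(T(H))=A^{\star}(H)$ together with the previously noted disjoint decomposition $A^{\star}(H)=PF(H)\,\dot{\cup}\,A(H)$, which is precisely the gap-counting argument you spell out. The paper leaves these steps implicit; you have simply made them explicit, with the correct attention to finiteness and disjointness.
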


\begin{theorem}\label{T(H) is NS}
Given a numerical set $H\neq\mathbb{N}$, we have
$$t(H)\leq g(H)+g(A(H))-F(H).$$
Moreover equality holds if and only if $T(H)$ is a numerical semigroup.
\end{theorem}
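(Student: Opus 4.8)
The plan is to reduce both the inequality and its equality clause to the elementary containment $A(T)\subseteq T$ valid for every numerical set $T$, exactly paralleling the proof of Theorem \ref{AS Characterisation}.

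First I would substitute the two identities already in hand. Corollary \ref{t(H))} gives $t(H)=g(A(H))-g(A(T(H)))$, and Lemma \ref{F,g of T(H)} gives $g(T(H))=F(H)-g(H)$. Plugging these in, the asserted inequality $t(H)\le g(H)+g(A(H))-F(H)$ becomes, after cancelling $g(A(H))$ from both sides and rearranging,
$$F(H)-g(H)\le g(A(T(H))),$$
which is exactly
$$g(T(H))\le g(A(T(H))).$$

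This reformulated inequality is immediate: applying the associated-semigroup operator to the numerical set $T(H)$ yields $A(T(H))\subseteq T(H)$, and passing to a subset of $\mathbb{N}$ can only enlarge the set of gaps, so $g(A(T(H)))\ge g(T(H))$. Since each step of the reduction is an equivalence (a substitution of an equality, or cancellation of a common term), the original inequality follows.

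For the equality statement I would observe that equality holds throughout precisely when $g(A(T(H)))=g(T(H))$; under the containment $A(T(H))\subseteq T(H)$ this forces $A(T(H))=T(H)$. By the basic fact recorded at the start of Section \ref{T-Set}, namely that $A(T)=T$ if and only if $T$ is a numerical semigroup, equality holds exactly when $T(H)$ is a numerical semigroup. I do not expect a genuine obstacle here: the entire argument is bookkeeping with the three established identities, and the only point needing care is verifying that the chain of manipulations is a true equivalence, so that the equality characterization transfers cleanly back to the original statement.
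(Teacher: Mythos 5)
Your proposal is correct and follows essentially the same route as the paper: both substitute $t(H)=g(A(H))-g(A(T(H)))$ (Corollary \ref{t(H))}) and $g(T(H))=F(H)-g(H)$ (Lemma \ref{F,g of T(H)}) to identify the defect $g(H)+g(A(H))-F(H)-t(H)$ with $g(A(T(H)))-g(T(H))$, then invoke $A(T(H))\subseteq T(H)$ with equality exactly when $T(H)$ is a numerical semigroup. The only cosmetic difference is that the paper computes this difference directly while you phrase it as an equivalence-preserving reduction; the content is identical.
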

\begin{proof}
We know by Lemma \ref{F,g of T(H)} and Corollary \ref{t(H))} that
$$g(A(T(H)))-g(T(H))
= g(A(H))-t(H)-(F(H)-g(H))$$
$$=g(A(H))+g(H)-F(H)-t(H).$$
This quantity must be non-negative and it is zero if and only if $T(H)$ is a numerical semigroup.
\end{proof}

A numerical set $H$ is called ordinary if $m(H)=F(H)+1$ i.e. $H=\{0,F(H)+1\rightarrow\}$. Note that $T(H)=\mathbb{N}$ if and only if $H$ is ordinary.
Since we have defined the $T$-Set for an arbitrary numerical set, we can apply in multiple times.

\begin{theorem}
For a non-ordinary numerical set $H$,
$$t(T(H))+t(H)\leq g(A(H))-g(H)+m(H).$$
Moreover, equality holds if and only if $T(T(S))$ is a numerical semigroup.
\end{theorem}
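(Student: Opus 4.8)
The plan is to reduce everything to the two facts already established for $T$-Sets of numerical sets: the exact type identity of Corollary \ref{t(H))} and the inequality of Theorem \ref{T(H) is NS}. The one idea that makes the proof work is to apply each of these to the correct object, namely the identity to $H$ itself and the inequality to the numerical set $T(H)$. Before doing so I would note that the hypotheses make both applications legitimate: since $H$ is non-ordinary, the remark preceding the statement gives $T(H)\neq\mathbb{N}$, so $T(H)$ is a genuine numerical set to which all the earlier lemmas apply.

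First, Corollary \ref{t(H))} applied to $H$ records the exact value
$$t(H)=g(A(H))-g(A(T(H))).$$
Separately, Theorem \ref{T(H) is NS} applied to the numerical set $T(H)$ yields
$$t(T(H))\leq g(T(H))+g(A(T(H)))-F(T(H)).$$
Using Lemma \ref{F,g of T(H)} to substitute $g(T(H))=F(H)-g(H)$ and $F(T(H))=F(H)-m(H)$, the right-hand side collapses to $g(A(T(H)))-g(H)+m(H)$. Adding this to the displayed identity for $t(H)$, the two occurrences of $g(A(T(H)))$ cancel and I obtain precisely
$$t(H)+t(T(H))\leq g(A(H))-g(H)+m(H),$$
which is the desired bound.

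For the equality clause, the only inequality in the chain is the one contributed by Theorem \ref{T(H) is NS} applied to $T(H)$; every other step is an identity. Since the equality condition of that theorem is exactly that the $T$-Set of its input be a numerical semigroup, equality holds here if and only if $T(T(H))$ is a numerical semigroup, which is the asserted characterization (the $T(T(S))$ in the statement being a typo for $T(T(H))$).

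I do not anticipate a real obstacle: the whole content is the choice to feed $T(H)$, rather than $H$, into Theorem \ref{T(H) is NS}, together with the observation that the cross term $g(A(T(H)))$ produced there is exactly cancelled by the exact formula for $t(H)$. The only point demanding a line of care is verifying $T(H)\neq\mathbb{N}$ so that the earlier results may be invoked for $T(H)$, and this is precisely what the non-ordinary hypothesis guarantees.
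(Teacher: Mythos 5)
Your proof is correct and follows essentially the same route as the paper: both apply Theorem \ref{T(H) is NS} to the numerical set $T(H)$, use Lemma \ref{F,g of T(H)} and Corollary \ref{t(H))} to rewrite the resulting bound, and read off the equality clause from the condition that $T(T(H))$ be a numerical semigroup. Your added remarks (checking $T(H)\neq\mathbb{N}$ via the non-ordinary hypothesis, and noting the typo $T(T(S))$ for $T(T(H))$) are correct but do not change the argument.
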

\begin{proof}
We apply Theorem \ref{T(H) is NS},
\begin{align*}
&\ g(T(H))+g(A(T(H)))-F(T(H))\\
&\ =F(H)-g(H)+g(A(H))-t(H)-(F(H)-m(H))\\
&\ =g(A(H))-g(H)+m(H)-t(H).
\end{align*}
\end{proof}

\begin{corollary}\label{T^2(S) is NS t+t=m}
If $S$ is a non-ordinary numerical semigroup then
$T(T(S))$ is a numerical semigroup if and only if $t(S)+t(T(S))=m(H)$.
\end{corollary}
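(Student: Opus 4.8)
The plan is to deduce this corollary directly from the immediately preceding theorem by specializing the general numerical set $H$ to the numerical semigroup $S$ itself. Applying that theorem with $H=S$ yields
$$t(T(S))+t(S)\leq g(A(S))-g(S)+m(S),$$
with equality precisely when $T(T(S))$ is a numerical semigroup. The non-ordinary hypothesis on $S$ is exactly what the theorem requires: it guarantees $T(S)\neq\mathbb{N}$, so the theorem applies and $T(T(S))$ is well defined.

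The one simplification I would carry out is to observe that, since $S$ is a numerical semigroup, it is its own associated semigroup, that is $A(S)=S$ (recall from Section \ref{T-Set} that $A(T)=T$ holds exactly when $T$ is a numerical semigroup). Hence $g(A(S))=g(S)$, and the right-hand side of the bound collapses:
$$g(A(S))-g(S)+m(S)=m(S).$$
Substituting this back into the inequality gives $t(S)+t(T(S))\leq m(S)$, with equality if and only if $T(T(S))$ is a numerical semigroup. Reading off the equality case is precisely the claimed biconditional.

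There is essentially no obstacle here: the entire content is the observation that the generic bound $g(A(H))-g(H)+m(H)$ degenerates to $m(S)$ once the underlying numerical set is a genuine numerical semigroup, so that $g(A(H))$ and $g(H)$ cancel. (I would also note that the statement as printed should read $m(S)$ rather than $m(H)$, since no auxiliary set $H$ appears among the hypotheses of the corollary.)
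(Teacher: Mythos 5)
Your proposal is correct and is exactly the derivation the paper intends: the corollary is stated without proof precisely because it follows from the preceding theorem by taking $H=S$, using $A(S)=S$ to collapse $g(A(H))-g(H)+m(H)$ to $m(S)$. Your observation that the printed $m(H)$ is a typo for $m(S)$ is also right.
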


\begin{corollary}[From \cite{H Nari}]
If $S$ is an almost symmetric numerical semigroup then $A^{\star}(S)$ is almost symmetric if and only if $t(S)+t(A^{\star}(S))=m(S)$.
\end{corollary}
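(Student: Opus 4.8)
The plan is to reduce this statement to Corollary \ref{T^2(S) is NS t+t=m}, which already characterises when $T(T(S))$ is a numerical semigroup. The bridge is the observation that for an almost symmetric $S$ the operation $A^{\star}$ coincides with the $T$-Set operation. Concretely, I would first show that if $S$ is almost symmetric then $T(S)=A^{\star}(S)$: by Theorem \ref{AS Characterisation} the almost symmetry of $S$ makes $T(S)$ a numerical semigroup, so $A(T(S))=T(S)$; on the other hand Theorem \ref{A of T of S} together with the observation $A^{\star}(S)=S\cup PF(S)$ gives $A(T(S))=A^{\star}(S)$. Comparing the two identities yields $T(S)=A^{\star}(S)$.

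Once this identification is in hand the rest is a chain of equivalences. Since $T(S)=A^{\star}(S)$ is literally the same set, and the definitions of $PF$ and $t$ for a numerical set restrict to the usual ones when the set is a numerical semigroup, I get $t(T(S))=t(A^{\star}(S))$ and, applying $T$ to both sides, $T(T(S))=T(A^{\star}(S))$. Now $A^{\star}(S)$ is a numerical semigroup, so by Theorem \ref{AS Characterisation} it is almost symmetric if and only if $T(A^{\star}(S))=T(T(S))$ is a numerical semigroup. By Corollary \ref{T^2(S) is NS t+t=m} this happens if and only if $t(S)+t(T(S))=m(S)$, and substituting $t(T(S))=t(A^{\star}(S))$ produces exactly the claimed condition $t(S)+t(A^{\star}(S))=m(S)$.

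The one place that needs care, and the only genuine obstacle, is the degenerate case: Corollary \ref{T^2(S) is NS t+t=m} assumes $S$ is non-ordinary, which is needed so that $T(S)\neq\mathbb{N}$ and $T(T(S))$ is defined. I would dispose of the ordinary case $S=\{0,F(S)+1\rightarrow\}$ separately, where $A^{\star}(S)=\mathbb{N}$ and the asserted identity can be checked directly. Everything else is bookkeeping: verifying that passing between the numerical-set and numerical-semigroup versions of $t$ and $PF$ causes no discrepancy, which is immediate from the parallel definitions given earlier in this section.
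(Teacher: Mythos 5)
Your proposal is correct and follows essentially the same route the paper intends: the paper states this result as an immediate consequence of Corollary \ref{T^2(S) is NS t+t=m}, with exactly the bridge you supply, namely that almost symmetry of $S$ forces $T(S)=A(T(S))=S\cup PF(S)=A^{\star}(S)$ (via Theorems \ref{AS Characterisation} and \ref{A of T of S}), so that $T(T(S))=T(A^{\star}(S))$ and Theorem \ref{AS Characterisation} applied to $A^{\star}(S)$ closes the chain of equivalences. Your separate treatment of the ordinary case is a point of care the paper leaves implicit, but it does not change the substance of the argument.
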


The transformation obtained by applying the $T$-Set twice has a simpler description as follows.

\begin{lemma}\label{shift T^2}
For a non-ordinary numerical set $H$,
$$T(T(H))=\{x-m(H)\mid x\in H\cup\{F(H)\}, x\neq 0\}$$
\end{lemma}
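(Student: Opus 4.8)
The plan is to prove the set equality directly, by computing membership in $T(T(H))$ through unfolding the definition of the $T$-set twice and comparing with the right-hand side. Write $m = m(H)$ and $F = F(H)$, and let $R = \{x - m \mid x \in H \cup \{F\},\ x \neq 0\}$ denote the claimed value. First I would record the basic data: by Lemma \ref{F,g of T(H)}, $F(T(H)) = F - m =: F'$. Since $H$ is non-ordinary we have $m \le F$, and since $m \in H$ while $F \notin H$ we get $m \ne F$, so $F' = F - m \ge 1$; in particular $T(H) \neq \mathbb{N}$, so $T(T(H))$ is well-defined.

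The heart of the argument is a telescoping of the two reflections. Unfolding the definition once, $y \in T(T(H))$ iff $F' - y \in (\mathbb{Z} \setminus T(H)) \cup \{0\}$. For $y > F'$ the quantity $F' - y$ is negative, hence lies in $\mathbb{Z}\setminus T(H)$, so every such $y$ belongs to $T(T(H))$; these same $y$ lie in $R$ because $y + m > F$ forces $y + m \in H$. It then remains to treat $0 \le y \le F'$, where $z := F' - y$ is a nonnegative integer $\le F'$. Unfolding the definition of $T(H)$ at $z$ and using the key cancellation $F - z = F - (F' - y) = m + y$, I get that $z \in T(H)$ iff $m + y \notin H$ or $m + y = 0$; since $m + y \ge m \ge 1$ the second alternative never holds, so $z \notin T(H)$ iff $m + y \in H$. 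Hence for $0 \le y < F'$ one has $y \in T(T(H))$ iff $m + y \in H$, while the boundary $y = F'$ gives $F' - y = 0$, placing $F'$ into $T(T(H))$ outright.

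Finally I would match this against $R$. For $0 \le y < F'$ one has $m + y < m + F' = F$, so the equality $m + y = F$ is impossible and $y \in R$ iff $m + y \in H \cup \{F\}$ iff $m + y \in H$, agreeing with the computation above. The single boundary value $y = F'$ corresponds to $m + y = F$: here $F \notin H$ but $F \in H \cup \{F\}$, so $F' \in R$, matching $F' \in T(T(H))$. Collecting the three regimes $y < F'$, $y = F'$, $y > F'$ then yields $T(T(H)) = R$.

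The only delicate point — and the reason the extra term $\{F(H)\}$ appears in the statement — is precisely this boundary, where the two reflections carry $F'$ back to $F$, which is a \emph{gap} of $H$ rather than an element; without adjoining $\{F\}$ the value $F'=F-m$ would be missing from the right-hand side. Everything else is routine bookkeeping of the three regimes, so I expect the main (and essentially only) obstacle to be stating the boundary case cleanly rather than any substantive difficulty.
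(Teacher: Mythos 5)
Your proof is correct and follows essentially the same route as the paper: both exploit that composing the two reflections telescopes into a translation by $m(H)$ (your cancellation $F-(F'-y)=m+y$ is exactly the paper's identification of $T(T(H))$ as $\{(F-m)-(F-y)\mid y\in H\cap[1,F]\}$ together with everything $\geq F-m$). Your version just makes explicit the boundary case $y = F(H)-m(H)$, which is where the extra element $\{F(H)\}$ enters and which the paper's terser ``the result follows'' leaves implicit.
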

\begin{proof}
We know that $T(T(H))$ consists of elements of the form $F(T(H))-x$ for $x\in Gap(T(H))$ along with all natural numbers $\geq F(T(H))$. Also we know that $F(T(H))=F(H)-m(H)$ and the gaps of $T(H)$ are $F(H)-y$ for $y$ in $T\cap [1,F(H)]$. The result follows.
\end{proof}

Lemma \ref{shift T^2} implies that the gaps of $T(T(H))$ are $x-m(H)$ for $x$ in $(Gap(H)\setminus\{F(H)\})\cap [m(H)+1, F(H)]$. And therefore
$$g(T(T(H)))=g(H)-m(H).$$

\begin{proposition}\label{A of T^2 H}
$A(T(T(H)))=A^{\star}(H\cup \{F(H)\})$.
\end{proposition}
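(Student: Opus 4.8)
The plan is to prove Proposition \ref{A of T^2 H} by leveraging the two descriptions we already have: the identity $A(T(H))=A^{\star}(H)$ and the explicit shift formula for the double $T$-Set from Lemma \ref{shift T^2}. The cleanest route is to apply the proposition $A(T(K))=A^{\star}(K)$ to the numerical set $K=T(H)$, which immediately gives $A(T(T(H)))=A^{\star}(T(H))$. So the whole problem reduces to identifying $A^{\star}(T(H))$ with $A^{\star}(H\cup\{F(H)\})$, and it would suffice to show that these two operators $A^{\star}$ produce the same semigroup.

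First I would recall from Lemma \ref{shift T^2} that $T(T(H))=\{x-m(H)\mid x\in H\cup\{F(H)\},\,x\neq 0\}$, i.e. $T(T(H))$ is exactly a downward shift of $H\cup\{F(H)\}$ by the multiplicity $m=m(H)$. The key observation is that the operator $A^\star$, and equivalently the composite $A\circ T$, is insensitive to such a uniform shift: if a set $G$ contains $0$ and we form the shifted set $G'=\{x-m\mid x\in G,\,x\neq 0\}\cup\{0\}$, then the condition $a+(G\setminus\{0\})\subseteq G$ matches up termwise with $a+(G'\setminus\{0\})\subseteq G'$ because both membership questions translate identically under the shift. Concretely, for $H\cup\{F(H)\}$ the nonzero elements are shifted down by $m$ to give $T(T(H))\setminus\{0\}$, so $a$ stabilizes one set under addition precisely when it stabilizes the other. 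This gives $A^{\star}(T(T(H)))=A^{\star}(H\cup\{F(H)\})$, but I actually want $A(T(T(H)))$, so I combine this with the preceding proposition applied at the level $K=T(H)$: $A(T(T(H)))=A^{\star}(T(H))$, and then use the shift argument to pass from $A^{\star}(T(H))$ down to $A^{\star}(H\cup\{F(H)\})$.

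The more careful version avoids juggling which level I am at and instead argues directly. I would start from $A(T(T(H)))$, unwind membership exactly as in the proof of the proposition $A(T(K))=A^{\star}(K)$: a natural number $a$ fails to lie in $A(T(T(H)))$ iff there is $x\in T(T(H))$ with $x+a\notin T(T(H))$. Translating $T(T(H))$ through the shift formula, membership of $z$ in $T(T(H))$ (for $z<F(T(T(H)))$) is equivalent to $z+m(H)\in H\cup\{F(H)\}$. Substituting this into the failure condition shows that $a\notin A(T(T(H)))$ iff there exists a nonzero $y\in H\cup\{F(H)\}$ with $y+a\notin H\cup\{F(H)\}$, which is exactly the statement $a\notin A^{\star}(H\cup\{F(H)\})$. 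This yields the desired equality.

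The main obstacle I anticipate is bookkeeping at the boundary: the shift by $m(H)$ and the passage between ``gap'' and ``element'' descriptions can misalign at the extreme indices, particularly around $F(H)$ itself and around elements in the range $[1,m(H)]$, and the element $F(H)$ is genuinely being adjoined rather than already present in $H$. I would need to check that the endpoints of the intervals in Lemma \ref{shift T^2} are handled consistently, and that the adjunction of $F(H)$ to $H$ on the right exactly accounts for the appearance of $F(H)-m(H)$ as the new Frobenius-adjacent element on the left, so that no spurious membership or omission is introduced when I translate the stabilization condition across the shift. Once the boundary cases are verified, the equivalence is a direct unwinding and the proposition follows.
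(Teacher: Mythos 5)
Your ``more careful version'' (the final paragraph of the proposal) is correct, and it is exactly the paper's proof: by Lemma \ref{shift T^2} the map $x\mapsto x+m(H)$ is a bijection from $T(T(H))$ onto $(H\cup\{F(H)\})\setminus\{0\}$, and it converts the defining condition of $A(T(T(H)))$ (quantified over \emph{all} of $T(T(H))$, including $0$) termwise into the defining condition of $A^{\star}(H\cup\{F(H)\})$ (quantified over the nonzero elements). Had you written only that paragraph, the proof would be complete as it stands.

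However, the ``key observation'' driving your first two paragraphs is false: the operator $A^{\star}$ is \emph{not} insensitive to a uniform shift. If $G=H\cup\{F(H)\}$ and $G'=T(T(H))$ is its shift by $m=m(H)$, then the nonzero elements of $G'$ correspond to the elements of $G\setminus\{0,m\}$; the element $m\in G$ is sent to $0\in G'$, so the requirement $a+m\in G$, which is part of the definition of $A^{\star}(G)$, has no counterpart among the conditions defining $A^{\star}(G')$. Concretely, take $H=\{0,2,5,6,7,\dots\}$, so $m(H)=2$, $F(H)=4$ and $T(T(H))=\{0,2,3,4,\dots\}$: then $A^{\star}(T(T(H)))=\mathbb{N}$, since the defining conditions only involve sums $a+y$ with $y\geq 2$, whereas $A^{\star}(H\cup\{4\})=\{0,2,3,4,\dots\}$ because $1+2=3\notin H\cup\{4\}$. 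What is true is precisely the proposition being proved, $A(T(T(H)))=A^{\star}(H\cup\{F(H)\})$, with $A$ rather than $A^{\star}$ on the shifted side: the condition at $y=0$ in the definition of $A$, namely $a\in T(T(H))$, i.e.\ $a+m\in H\cup\{F(H)\}$, supplies exactly the missing condition at $x=m$ --- this is the boundary misalignment you yourself flagged as the anticipated obstacle. For the same reason, your first route's reduction to the identity $A^{\star}(T(H))=A^{\star}(H\cup\{F(H)\})$ cannot be closed by a shift argument: that identity is true, but $T(H)$ is a reflection of $H$, not a shift of $H\cup\{F(H)\}$, so proving it requires the proposition at issue (or an independent argument). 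In short: keep the final paragraph, discard the shift-invariance claim.
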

\begin{proof}
Pick a natural number $a$.
By Lemma \ref{shift T^2} $a\in A(T(T(H)))$ if and only if $x\in H\cup\{F(H)\}\setminus\{0\}$ implies $a+x\in H\cup\{F(H)\}\setminus\{0\}$. Which happens if and only if $a\in A^{\star}(H\cup\{F(H)\})$.
\end{proof}
\begin{corollary}\label{type decresing}
If $S$ is a non-ordinary numerical semigroup then
$$t(S\cup\{F(S)\})=t(S)+t(T(S))-1\geq t(S).$$
\end{corollary}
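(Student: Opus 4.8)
The plan is to route everything through Proposition \ref{A of T^2 H}, which identifies $A(T(T(S)))$ with $A^{\star}(S\cup\{F(S)\})$, and then to unwind the type using the genus identities already in hand. Write $S'=S\cup\{F(S)\}$. Since $S$ is non-ordinary, $S'$ is again a numerical semigroup with $g(S')=g(S)-1$, and the non-ordinary hypothesis is exactly what lets Lemma \ref{shift T^2}, hence Proposition \ref{A of T^2 H}, apply to $H=S$.

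First I would record how $t(S')$ sees $A^{\star}(S')$. Because $S'$ is a numerical semigroup we have $A(S')=S'$, so the decomposition $A^{\star}(S')=PF(S')\,\dot{\cup}\,A(S')=S'\cup PF(S')$ (with $PF(S')$ a set of gaps) exhibits $A^{\star}(S')$ as $S'$ with its $t(S')$ pseudo-Frobenius numbers filled in. Hence $g(A^{\star}(S'))=g(S')-t(S')=g(S)-1-t(S')$. Combined with Proposition \ref{A of T^2 H}, this reads $g(A(T(T(S))))=g(S)-1-t(S')$.

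Next I would telescope the type identity. Applying Corollary \ref{t(H))} (equivalently Corollary \ref{genus of T of S}) to $H=S$ and to $H=T(S)$ and adding, the middle term $g(A(T(S)))$ cancels:
$$t(S)+t(T(S))=\big(g(A(S))-g(A(T(S)))\big)+\big(g(A(T(S)))-g(A(T(T(S))))\big)=g(S)-g(A(T(T(S)))),$$
where I used $A(S)=S$. Substituting the expression for $g(A(T(T(S))))$ from the previous step gives $t(S)+t(T(S))=g(S)-\big(g(S)-1-t(S')\big)=t(S')+1$, that is, $t(S\cup\{F(S)\})=t(S)+t(T(S))-1$.

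Finally, for the inequality it suffices to check $t(T(S))\geq 1$. Since $S$ is non-ordinary, $T(S)\neq\mathbb{N}$, so $T(S)$ has a genuine Frobenius number $F(T(S))$, and $F(T(S))$ always lies in $PF(T(S))$; thus $t(T(S))\geq 1$ and $t(S')\geq t(S)$. I do not expect a genuine obstacle here: the conceptual content is carried entirely by Proposition \ref{A of T^2 H}, and what remains is bookkeeping of genera. The only points demanding care are keeping straight which identities hold for numerical \emph{sets} versus numerical \emph{semigroups} (in particular using $A(S')=S'$), and confirming that the non-ordinary hypothesis is precisely what is needed both to invoke Proposition \ref{A of T^2 H} and to guarantee $t(T(S))\geq 1$.
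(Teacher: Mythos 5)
Your proposal is correct and follows essentially the same route as the paper: both arguments combine Proposition \ref{A of T^2 H} with a double application of Corollary \ref{t(H))} (your telescoping step is exactly the paper's ``applying Corollary \ref{t(H))} twice''), together with the identity $g(A^{\star}(S'))=g(S')-t(S')$ for the semigroup $S'=S\cup\{F(S)\}$. The only difference is cosmetic ordering of the substitutions, plus your explicit justification of $t(T(S))\geq 1$ via $F(T(S))\in PF(T(S))$, which the paper leaves to its earlier general remark that $t(H)\geq 1$ for any numerical set $H\neq\mathbb{N}$.
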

\begin{proof}
By applying Corollary \ref{t(H))} twice we see that
$$g(A(T(T(H))))=g(A(T(H)))-t(T(H))
=g(A(H))-t(H)-t(T(H)).$$
We now use the above proposition.
$$t(S\cup\{F(S)\})
=g(S\cup\{F(S)\})-g(A^{\star}(S\cup\{F(S)\}))$$
$$=g(S)-1-g(A(T(T(S))))
=g(S)-1-(g(S)-t(S)-t(T(S))).$$
\end{proof}
\begin{corollary}
If $S$ is a non-ordinary numerical semigroup then
$T(T(S))$ is a numerical semigroup if and only if $S\cup \{F(S)\}$ has max embedding dimension.
\end{corollary}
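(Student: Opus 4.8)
The plan is to chain together the two corollaries that immediately precede this statement with the standard characterisation of maximal embedding dimension in terms of the type. First I would recall the (well-known) fact that a numerical semigroup $N$ has maximal embedding dimension if and only if $t(N)=m(N)-1$. The quickest way to see this is through the Apéry set $\mathrm{Ap}(N,m(N))$: the pseudo-Frobenius numbers of $N$ are exactly $\{w-m(N)\mid w \text{ is maximal in } \mathrm{Ap}(N,m(N)) \text{ under the order } x\preceq y \Leftrightarrow y-x\in N\}$. In the maximal embedding dimension case every nonzero Apéry element is maximal, so $t(N)=|\mathrm{Ap}(N,m(N))|-1=m(N)-1$; conversely, $t(N)=m(N)-1$ forces all nonzero Apéry elements to be maximal, and a short computation recovers the defining condition $w_1+w_2-m(N)\in N$. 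Since Apéry sets are not developed in the text, I would cite this from the standard literature rather than reprove it.

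Next I would pin down the multiplicity of $S\cup\{F(S)\}$. Because $S$ is non-ordinary, its smallest positive element cannot equal its largest gap, so in fact $m(S)<F(S)$. Hence adjoining the Frobenius number leaves the smallest positive element untouched: $m(S\cup\{F(S)\})=m(S)$. (That $S\cup\{F(S)\}$ is again a numerical semigroup is the usual parent-in-the-tree fact and needs no separate argument.)

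The core is then a two-line equivalence. By Corollary \ref{type decresing} we have $t(S\cup\{F(S)\})=t(S)+t(T(S))-1$. Feeding in the multiplicity computation and the criterion above, $S\cup\{F(S)\}$ has maximal embedding dimension exactly when $t(S\cup\{F(S)\})=m(S)-1$, that is, exactly when $t(S)+t(T(S))=m(S)$. On the other hand, Corollary \ref{T^2(S) is NS t+t=m} states precisely that $T(T(S))$ is a numerical semigroup if and only if $t(S)+t(T(S))=m(S)$. Matching these two right-hand conditions yields the desired biconditional.

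The hard part is not the logical assembly, which is pure bookkeeping once the two earlier corollaries are in hand, but rather the one external ingredient: the equivalence \emph{maximal embedding dimension} $\Leftrightarrow$ $t(N)=m(N)-1$. Everything else — the multiplicity identity $m(S\cup\{F(S)\})=m(S)$ and the type identity from Corollary \ref{type decresing} — is routine, so I expect the write-up to consist of a citation (or a short self-contained Apéry-set lemma) followed by the one-line chain of equivalences reducing the claim to Corollary \ref{T^2(S) is NS t+t=m}.
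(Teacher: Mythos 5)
Your proposal is correct and follows essentially the same route as the paper's own proof: cite the standard fact that maximal embedding dimension is equivalent to $t(N)=m(N)-1$, observe that non-ordinariness gives $m(S\cup\{F(S)\})=m(S)$, apply the identity $t(S\cup\{F(S)\})=t(S)+t(T(S))-1$ from Corollary \ref{type decresing}, and conclude via Corollary \ref{T^2(S) is NS t+t=m}. The only cosmetic difference is that you sketch an Ap\'ery-set justification of the maximal-embedding-dimension criterion, whereas the paper simply states it as known.
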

\begin{proof}
It is a known fact that a numerical semigroup has max embedding dimension if and only if its type is one less than the multiplicity.
$S$ is not ordinary so $m(S\cup\{F(S)\})=m(S)$. It follows that $S\cup\{F(S)\}$ is max embedding dimension if and only if $m(S)=t(S)+t(T(S))$. By Corollary \ref{T^2(S) is NS t+t=m} this happens if and only if $T(T(S))$ is a numerical semigroup.
\end{proof}
\begin{corollary}\label{S union F max ED}
If $S$ is an almost symmetric numerical semigroup then $A^{\star}(S)$ is almost symmetric if and only if $S\cup \{F(S)\}$ has max embedding dimension.
\end{corollary}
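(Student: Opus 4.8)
The plan is to combine Theorem~\ref{AS Characterisation} with the preceding corollary by exploiting the fact that, for an almost symmetric semigroup, the operation $A^{\star}$ is literally the $T$-Set operation. First I would record the key identity. Since $S$ is almost symmetric, Theorem~\ref{AS Characterisation} tells us that $T(S)$ is a numerical semigroup, and for any numerical semigroup $N$ we have $A(N)=N$ (as $A(T)=T$ exactly when $T$ is a numerical semigroup). Combining this with Theorem~\ref{A of T of S}, which gives $A(T(S))=S\cup PF(S)=A^{\star}(S)$, yields
$$A^{\star}(S)=A(T(S))=T(S).$$
So applying $A^{\star}$ to an almost symmetric semigroup produces exactly its $T$-Set.

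Next I would apply Theorem~\ref{AS Characterisation} a second time, now to the numerical semigroup $A^{\star}(S)$: it is almost symmetric if and only if $T(A^{\star}(S))$ is a numerical semigroup. Substituting the identity $A^{\star}(S)=T(S)$ from the previous step gives $T(A^{\star}(S))=T(T(S))$, so $A^{\star}(S)$ is almost symmetric if and only if $T(T(S))$ is a numerical semigroup. The preceding corollary then states precisely that, for non-ordinary $S$, $T(T(S))$ is a numerical semigroup if and only if $S\cup\{F(S)\}$ has max embedding dimension, which closes the chain. Here the hypothesis that $S$ is non-ordinary is exactly what guarantees $A^{\star}(S)=T(S)\neq\mathbb{N}$, so that the second application of the $T$-Set is legitimate.

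The only remaining point, and the step I expect to be the genuine obstacle, is the ordinary case, since the corollary feeding the argument is stated for non-ordinary $S$ and for ordinary $S=\{0,F+1\rightarrow\}$ the set $A^{\star}(S)$ degenerates to $\mathbb{N}$, where ``almost symmetric'' must be interpreted with care. I would dispose of this case by hand: an ordinary $S$ is almost symmetric, $A^{\star}(S)=\mathbb{N}$, and $S\cup\{F(S)\}=\{0,F,F+1\rightarrow\}$ has type $F-1$ equal to one less than its multiplicity $F$, hence has max embedding dimension, so both sides of the equivalence hold under the natural convention for $\mathbb{N}$. Everything else is a direct substitution, so I do not anticipate any hidden difficulty beyond being explicit that $A(N)=N$ for a numerical semigroup $N$ and tracking this ordinary boundary case.
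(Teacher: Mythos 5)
Your proof is correct and takes essentially the same route the paper intends: the corollary follows from the preceding one via the identity $A^{\star}(S)=T(S)$ for almost symmetric $S$ (Theorem \ref{A of T of S} plus Theorem \ref{AS Characterisation}) and a second application of Theorem \ref{AS Characterisation} to $A^{\star}(S)$, exactly as you lay out. Your explicit verification of the ordinary case, which the paper leaves implicit, is a sensible addition and does not change the argument.
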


\begin{proof}[Proof of Theorem \ref{eventually constant type}]
If $S_n$ is not ordinary it follows that none of the $S_i$ are ordinary for $i\geq n$. Therefore by Corollary \ref{type decresing} the sequence $t(S_{n+i})$ is non increasing and positive which implies that it must be eventually constant.
\end{proof}

So far we have applied the $T$-Set operation twice, we now look at how many times it can be applied to a given numerical set. Given a numerical set $H$, define $l(H)=|H\cap [1,g(H)]|$. Moreover, if $g(H)\in H$ then define  $L(H)=2l(H)$.
And if $g(H)\not\in H$ then define $L(H)=2l(H)+1$.
Note that $L(H)=0$ if and only if $H=\mathbb{N}$, so $T(H)$ is defined if and only if $L(H)\geq 1$.

\begin{lemma}
Given a numerical set $H\neq \mathbb{N}$, $L(T(H))=L(H)-1$.
\end{lemma}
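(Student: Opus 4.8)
The plan is to reduce the whole statement to a count of gaps of $H$ and then to show that two boundary effects cancel. First I would fix the notation $F=F(H)$ and $g=g(H)$, and invoke Lemma \ref{F,g of T(H)} to record $F(T(H))=F-m(H)$ and $g(T(H))=F-g$. The next step is to recall the explicit description of $T(H)$ established in the proof of that lemma: for $0\le n<F$ one has $n\in T(H)$ exactly when $F-n$ is a gap of $H$, while every $n\ge F$ lies in $T(H)$. Equivalently, the gaps of $T(H)$ are the numbers $F-x$ with $x\in H\cap[1,F]$.

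With this description in hand I would compute $l(T(H))=|T(H)\cap[1,g(T(H))]|=|T(H)\cap[1,F-g]|$. Since $H\neq\mathbb{N}$ gives $g\ge 1$, every $n\in[1,F-g]$ satisfies $n<F$, so $n\in T(H)$ iff $F-n\in Gap(H)$; substituting $y=F-n$ turns this into $l(T(H))=|Gap(H)\cap[g,F-1]|$ (the interval, and hence $l(T(H))$, being empty precisely in the degenerate ordinary case $g=F$, where $T(H)=\mathbb{N}$). The key structural point is then to determine membership of the new genus: $g(T(H))=F-g$ lies in $T(H)$ precisely when $F-(F-g)=g$ is a gap of $H$, that is, exactly when $g\notin H$. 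Crucially, this same dichotomy, $g\in H$ versus $g\notin H$, is also what decides the ``$2l$ versus $2l+1$'' clause in the definition of $L$ applied to $H$ itself.

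I would then carry out the count against the total $g=|Gap(H)|$. Writing $A=|Gap(H)\cap[1,g-1]|$ and $\delta=1$ if $g\notin H$ and $\delta=0$ if $g\in H$, one gets $l(H)=g-A-\delta$, while using that the Frobenius gap $F$ is excluded from $[g,F-1]$ gives $l(T(H))=g-A-1$; hence $l(T(H))=l(H)+\delta-1$. Now I split into the two cases. If $g\notin H$ then $g(T(H))\in T(H)$, so $L(T(H))=2l(T(H))=2l(H)$, while $L(H)=2l(H)+1$. If $g\in H$ then $g(T(H))\notin T(H)$, so $L(T(H))=2l(T(H))+1=2l(H)-1$, while $L(H)=2l(H)$. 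In both cases $L(T(H))=L(H)-1$, and I would verify that the ordinary case $g=F$ (where $l(H)=0$, $\delta=1$, $L(H)=1$, $T(H)=\mathbb{N}$) also fits.

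The main obstacle is purely the boundary bookkeeping: one must be careful that the gap at $F$ is removed from $[g,F-1]$, that the single integer $g$ is counted consistently in both $l(H)$ and the membership test for $g(T(H))$, and that the ordinary case is not lost. The essential mechanism is that the indicator $\delta$ enters with opposite signs in the shift of $l$ and in the ``$+1$'' term separating $2l$ from $2l+1$, so the two effects always combine to a net change of $-1$; making this cancellation transparent, rather than belaboring the individual counts, is where the care is needed.
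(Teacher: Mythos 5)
Your proposal is correct and follows essentially the same route as the paper: both compute $l(T(H))=|Gap(H)\cap[g(H),F(H)-1]|$ by reflecting through $F(H)$, observe that this equals $l(H)-1+|Gap(H)\cap\{g(H)\}|$ (your $\delta$ bookkeeping), and then split on whether $g(H)\in H$, which simultaneously decides the $2l$ versus $2l+1$ clause for $H$ and the membership $g(T(H))\in T(H)$. The only difference is cosmetic (the paper counts $|Gap(H)\cap[g(H)+1,F(H)]|=l(H)$ directly rather than introducing $A$ and $\delta$), so no further comparison is needed.
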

\begin{proof}
Firstly notice that
$$|Gap(H)\cap [g(H)+1,F(H)]|
=g(H)-|Gap(H)\cap [1,g(H)]|
=l(H).$$
Therefore
\begin{align*}
     l(T(H))=&\ |T(H)\cap [1,F(H)-g(H)]|=|Gap(H)\cap [g(H),F(H)-1)|\\
    &\ =l(H)-1+|Gap(H)\cap \{g(H)\}|
\end{align*}
Now if $g(H)\in H$ then $g(T(H))=F(H)-g(H)\not\in T(H)$.
In this case $l(T(H))=l(H)-1$ and hence $L(T(H))=L(H)-1$.

On the other hand if $g(H)\not\in H$ then $g(T(H))=F(H)-g(H)\in T(H)$.
In this case $l(T(H))=l(H)$ and $L(T(H))=L(H)-1$.
\end{proof}

It follows that given a numerical set $H$, we can construct
$$H,T(H), T^2(H),\dots, T^{L(H)}(H)=\mathbb{N}.$$
Now define $m_i(H)$ to be the $i^{th}$ positive element of $H$. Let $m_0(H)=0$.
For $i\leq g(H)$ let $F_i(H)$ be the $i^{th}$ largest gap of $H$.

\begin{proposition}\label{shift mi}
Given a numerical set $H$ and positive integer $i$, such that $2i\leq L(H)$. We have
$$T^{2i}(H)=\{x-m_{i}(H)\mid x\geq m_i(H), x\in H\cup \{F_1(H),F_2(H),\dots,F_i(H)\}\}.$$
\end{proposition}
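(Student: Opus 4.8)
The plan is to prove the identity by induction on $i$, using Lemma \ref{shift T^2} as the single computational engine and reading off the two quantities $m(T^{2i}(H))$ and $F(T^{2i}(H))$ at each stage from the structural description we carry along. For the base case I would take $i=0$, where the claimed formula reads $T^{0}(H)=\{x\mid x\ge 0,\ x\in H\}=H$ (with $m_0(H)=0$ and an empty list of top gaps), so nothing needs checking; equivalently, $i=1$ is exactly Lemma \ref{shift T^2} once one observes that for a non-ordinary $H$ the condition ``$x\neq 0$, $x\in H\cup\{F(H)\}$'' coincides with ``$x\ge m_1(H)$''.

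For the inductive step, assuming the formula at index $i$ with $2(i+1)\le L(H)$, I would set $K=T^{2i}(H)$ and compute $T^{2(i+1)}(H)=T^{2}(K)$ through Lemma \ref{shift T^2}, which demands knowing $m(K)$ and $F(K)$. From the inductive description, a positive integer $n$ is a gap of $K$ precisely when $n+m_i(H)\in Gap(H)\setminus\{F_1(H),\dots,F_i(H)\}$, so the largest gap is $F(K)=F_{i+1}(H)-m_i(H)$; likewise the smallest positive element is $m(K)=\min(m_{i+1}(H),F_i(H))-m_i(H)$. Feeding these into Lemma \ref{shift T^2} converts the shift by $m_i(H)$ into a shift by $m_{i+1}(H)$ (provided $m(K)=m_{i+1}(H)-m_i(H)$) and enlarges the list $F_1,\dots,F_i$ by $F_{i+1}$, which is exactly the index-$(i+1)$ formula — modulo reconciling the cutoff ``$x>m_i(H)$'' produced by the computation with the desired ``$x\ge m_{i+1}(H)$''.

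The crux, and the step I expect to be the main obstacle, is a purely combinatorial comparison between the largest gaps and the small elements of $H$, which is exactly where the hypothesis enters. Recalling that $l(H)=|H\cap[1,g(H)]|$ and that $L(H)$ equals $2l(H)$ or $2l(H)+1$ according as $g(H)\in H$ or not, the inequality $2(i+1)\le L(H)$ is equivalent to $i+1\le l(H)$. I would then invoke the count that there are exactly $l(H)$ gaps lying in $[g(H)+1,F(H)]$ (established inside the proof that $L(T(H))=L(H)-1$), together with the trivial fact that any gap exceeding $g(H)$ is larger than any gap at most $g(H)$, so these $l(H)$ gaps are automatically the $l(H)$ largest. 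Hence $i+1\le l(H)$ forces $F_{i+1}(H)>g(H)$, while the same bound gives $m_{i+1}(H)\le g(H)$. Combining, $F_j(H)\ge F_{i+1}(H)>g(H)\ge m_{i+1}(H)>m_i(H)$ for every $j\le i+1$; this one chain simultaneously yields $\min(m_{i+1}(H),F_i(H))=m_{i+1}(H)$ (so $m(K)=m_{i+1}(H)-m_i(H)$), guarantees $F_{i+1}(H)>m_i(H)$ (so $F(K)$ is as claimed), and shows that no top gap lies in $(m_i(H),m_{i+1}(H))$, so the conditions ``$x>m_i(H)$'' and ``$x\ge m_{i+1}(H)$'' select the same members of $H\cup\{F_1,\dots,F_{i+1}\}$.

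Finally I would record the side condition needed to apply Lemma \ref{shift T^2} to $K$: since $L(T^{2i}(H))=L(H)-2i$, the hypothesis $2(i+1)\le L(H)$ gives $L(K)\ge 2$, so $K$ is non-ordinary and $T^{2}(K)$ is defined, exactly as the lemma requires. This closes the induction.
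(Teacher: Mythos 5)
Your proof is correct and takes essentially the same approach as the paper: induction on $i$ with Lemma \ref{shift T^2} as the sole computational tool. The only difference is the direction of the unfolding---you write $T^{2(i+1)}(H)=T^{2}\bigl(T^{2i}(H)\bigr)$ and apply the inductive hypothesis first, whereas the paper writes $T^{2(i+1)}(H)=T^{2i}\bigl(T^{2}(H)\bigr)$ and applies the inductive hypothesis to $T^{2}(H)$---and your explicit verification that $i+1\leq l(H)$ forces $F_{i+1}(H)>g(H)\geq m_{i+1}(H)>m_{i}(H)$ is precisely the re-indexing bookkeeping that the paper's chain of equalities leaves implicit.
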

\begin{proof}
We induct on $i$, the base case $i=1$ is already known from Lemma \ref{shift T^2}. Now assume this holds for some $i$, and $2i+2\leq L(H)$. We write $m_j$ for $m_j(H)$ and $F_j$ for $F_j(H)$.
\begin{align*}
    &\ T^{2(i+1)}(H)=T^{2i}(T^{2}(H))\\
    &\ =\{x-m_{i}(T^2(H))\mid x\geq m_i(T^2(H)), x\in T^2(H)\cup \{F_j(T^2(H))\mid 1\leq j\leq i\}\}\\
    &\ =\big\{x-(m_{i+1}-m)\mid x\geq m_{i+1}-m, x+m\in H\cup\{F(H)\}\cup \{F_j\mid 2\leq j\leq i+1\}\big\} \\
    &\ =\{y-m_{i+1}\mid y\geq m_{i+1}, y\in H\cup \{F_1,F_2,\dots,F_{i+1}\}\}.
\end{align*}
\end{proof}

We therefore see that if $2i\leq L(H)$ then
$F(T^{2i}(H))=F_{i+1}-m_{i}$,
$g(T^{2i}(H))=g(H)-m_{i}$ and
$m(T^{2i}(H))=m_{i+1}-m_{i}$.
And hence if $2i+1\leq L(H)$ then
$F(T^{2i+1}(H))=F_{i+1}-m_{i+1}$,
$g(T^{2i+1}(H))=F_{i+1}-g(H)$ and
$m(T^{2i+1}(H))=F_{i+1}-F_{i+2}$.

\begin{proposition}
Given $k<L(H)$
$$\sum_{i=0}^{k}t(T^{i}(H))=g(A(H))-g(A(T^{k+1}(H))).$$
In particular
$$g(A(H))=\sum_{i=0}^{L(H)-1}t(T^{i}(H)).$$
\end{proposition}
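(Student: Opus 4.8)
The plan is to recognize the left-hand sum as a telescoping sum whose summands are supplied directly by Corollary \ref{t(H))}. That corollary asserts $t(H)=g(A(H))-g(A(T(H)))$ for any numerical set $H\neq\mathbb{N}$, and the key observation is that it applies verbatim with $H$ replaced by each iterate $T^{i}(H)$, provided $T^{i}(H)\neq\mathbb{N}$ so that $T(T^{i}(H))=T^{i+1}(H)$ is defined. Thus the whole argument reduces to checking the range of applicability and then collapsing the telescope.

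First I would pin down the range of validity using the lemma $L(T(H))=L(H)-1$, which iterates to $L(T^{i}(H))=L(H)-i$ for all $i\leq L(H)$. Since $k<L(H)$, every index $i$ with $0\leq i\leq k$ satisfies $L(T^{i}(H))=L(H)-i\geq 1$, so $T^{i}(H)\neq\mathbb{N}$ and Corollary \ref{t(H))} may be applied to it, yielding
$$t(T^{i}(H))=g(A(T^{i}(H)))-g(A(T^{i+1}(H))).$$
Summing this identity over $i=0,1,\dots,k$ and using $T^{0}(H)=H$, the right-hand side telescopes to leave $g(A(H))-g(A(T^{k+1}(H)))$, which is exactly the claimed formula. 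For the ``in particular'' statement I would take $k=L(H)-1$, still $<L(H)$, so that $T^{k+1}(H)=T^{L(H)}(H)=\mathbb{N}$; since $A(\mathbb{N})=\mathbb{N}$ has genus $0$, the final term vanishes and one obtains $g(A(H))=\sum_{i=0}^{L(H)-1}t(T^{i}(H))$.

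There is essentially no hard step once the telescoping structure is noticed; the single point that genuinely requires care is confirming that Corollary \ref{t(H))} is legitimately applicable to every iterate appearing in the sum. This is precisely what the hypothesis $k<L(H)$, combined with $L(T^{i}(H))=L(H)-i$, guarantees: it ensures that none of $H,T(H),\dots,T^{k}(H)$ equals $\mathbb{N}$, which is the case for which the $T$-Set, and hence the corollary, would be undefined. I expect this applicability check to be the only genuine obstacle, and it is handled entirely by the earlier lemma.
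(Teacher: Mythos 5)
Your proof is correct and is essentially the paper's own argument: the paper proves this proposition in one line by citing the identity $t(T^{i}(H))=g(A(T^{i}(H)))-g(A(T^{i+1}(H)))$ and telescoping, exactly as you do. Your additional care in verifying applicability via $L(T^{i}(H))=L(H)-i\geq 1$ and in evaluating the boundary term $g(A(\mathbb{N}))=0$ simply makes explicit what the paper leaves implicit.
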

\begin{proof}
This follows from the fact that
$t(T^i(H))=g(A(T^i(H)))-g(A(T^{i+1}(H)))$.
\end{proof}
\begin{corollary}\label{Even}
If $2i\leq L(H)$ then
$$\sum_{j=0}^{2i-1} t(T^{j}(H))\leq g(A(H))-g(H)+m_i(H).$$
Moreover equality holds if and only if $T^{2i}(H)$ is a numerical semigroup.
\end{corollary}
\begin{proof}
We use the fact that $g(A(T^{2i}(H)))\geq g(T^{2i}(H))=g(H)-m_{i}(H)$. And equality holds if and only if $T^{2i}(H)$ is a numerical semigroup.
\end{proof}
\begin{corollary}\label{Odd}
If $2i+1\leq L(H)$ then
$$\sum_{j=0}^{2i} t(T^{j}(H))\leq g(A(H))+g(H)-F_{i+1}(H).$$
Moreover equality holds if and only if $T^{2i+1}(H)$ is a numerical semigroup.
\end{corollary}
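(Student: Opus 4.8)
The plan is to mirror the proof of Corollary \ref{Even}, replacing the even-index genus formula with its odd-index counterpart. First I would invoke the preceding Proposition with $k=2i$: since the hypothesis $2i+1\le L(H)$ gives $k=2i<L(H)$, the Proposition applies and yields
$$\sum_{j=0}^{2i}t(T^{j}(H))=g(A(H))-g(A(T^{2i+1}(H))).$$
Consequently the asserted inequality is equivalent to the single lower bound $g(A(T^{2i+1}(H)))\ge F_{i+1}(H)-g(H)$, and the whole problem reduces to establishing this.

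To establish the bound I would combine two facts. The first is the general inclusion $A(K)\subseteq K$, valid for every numerical set $K$ (recorded in Section \ref{T-Set}), which forces $g(A(K))\ge g(K)$ because a larger complement means more gaps. The second is the genus formula stated immediately after Proposition \ref{shift mi}: when $2i+1\le L(H)$ one has $g(T^{2i+1}(H))=F_{i+1}(H)-g(H)$. Applying the inclusion to $K=T^{2i+1}(H)$ then gives
$$g(A(T^{2i+1}(H)))\ge g(T^{2i+1}(H))=F_{i+1}(H)-g(H),$$
and substituting this into the identity from the first paragraph produces the claimed inequality $\sum_{j=0}^{2i}t(T^{j}(H))\le g(A(H))+g(H)-F_{i+1}(H)$.

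For the equality clause I would observe that $g(A(K))=g(K)$ holds precisely when $A(K)=K$, which by the discussion of $A(T)$ in Section \ref{T-Set} happens if and only if $K$ is a numerical semigroup. Taking $K=T^{2i+1}(H)$ shows equality in the corollary is equivalent to $T^{2i+1}(H)$ being a numerical semigroup. I do not anticipate any real obstacle: the argument is the odd-index analogue of Corollary \ref{Even}, and the only point demanding attention is confirming that the hypothesis $2i+1\le L(H)$ is exactly what licenses both the use of the Proposition at $k=2i$ and the genus formula $g(T^{2i+1}(H))=F_{i+1}(H)-g(H)$ in the key step.
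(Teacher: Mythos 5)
Your proposal is correct and is exactly the paper's argument: the paper's proof of this corollary is just "this is similar" to Corollary \ref{Even}, namely apply the preceding Proposition with $k=2i$ and then use $g(A(T^{2i+1}(H)))\geq g(T^{2i+1}(H))=F_{i+1}(H)-g(H)$, with equality precisely when $A(T^{2i+1}(H))=T^{2i+1}(H)$, i.e.\ when $T^{2i+1}(H)$ is a numerical semigroup. You have simply written out in full the details the paper leaves implicit, including the correct bookkeeping that $2i+1\leq L(H)$ is what licenses both the Proposition at $k=2i$ and the genus formula.
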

\begin{proof}
This is similar, $g(T^{2i+1}(H))=F_{i+1}(H)-g(H)$.
\end{proof}

\begin{proof}[Proof of Theorem \ref{A star till k almost sym}]
If they are all almost symmetric then for each $j\in [0,k]$ $A^{\star j}(S)=T^{j}(S)$, and hence $T(A^{\star j}(S))=T^{j+1}(S)$ is a numerical semigroup. The result follows from Corollary \ref{Even} and Corollary \ref{Odd}.

Conversely if the $t(A^{\star j}(S))$ are given by these expressions then $t(S)=2g(S)-m_{0}(S)-F_{1}(S)=2g(S)-F(S)$. Therefore $S$ is almost symmetric which implies that $T(S)=A^{\star}(S)$. Continuing by induction we see that for each $j\in [0,k]$ $T^{j}(S)=A^{\star j}(S)$ and it is almost symmetric.
\end{proof}

For a non-ordinary numerical set $H$, it's ordinarisation transforsm is
$O(H)=H\cup\{F(H)\}\setminus\{m(H)\}$.
This was defined in \cite{Ordinarization transform}.
They also defined $l(S)$ for a numerical semigroup $S$ and called it the ordinarizaation number of $S$.

\begin{proposition}
For $2\leq 2i\leq L(H)$,
$$A(T^{2i}(H))=A^{\star}(O^{i-1}(H\cup\{F(H)\})).$$
Moreover for $0\leq 2i<L(H)$,
$$A(T^{2i+1}(H))=A^{\star}(O^{i}(H)).$$
\end{proposition}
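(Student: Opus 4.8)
The plan is to prove both identities by rewriting each left-hand side as $A^\star$ of an explicitly described numerical set and then matching it against $A^\star(O^{\,\cdot})$ through a single "shift versus deletion" lemma. Two ingredients are already available: Proposition \ref{A of T^2 H}, which gives $A(T^{2}(H'))=A^\star(H'\cup\{F(H')\})$, and the earlier proposition $A(T(H'))=A^\star(H')$. Applying them to $H'=T^{2i-2}(H)$ and $H'=T^{2i}(H)$ turns the left-hand sides into
$$A(T^{2i}(H))=A^\star\big(T^{2i-2}(H)\cup\{F(T^{2i-2}(H))\}\big),\qquad A(T^{2i+1}(H))=A^\star\big(T^{2i}(H)\big),$$
where $T^{2i-2}(H)$ is non-ordinary because $2i\leq L(H)$. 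Thus everything reduces to understanding $A^\star$ of the sets produced by Proposition \ref{shift mi}.

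First I would record an explicit description of the ordinarisation iterates. Writing $N_j=H\cup\{F_1(H),\dots,F_j(H)\}$, I claim that in the relevant range $O^{j}(H)=N_j\setminus\{m_1(H),\dots,m_j(H)\}$, with $F(O^{j}(H))=F_{j+1}(H)$ and $m(O^{j}(H))=m_{j+1}(H)$. This is an induction on $j$: granting the formula for $O^{j}(H)$, its gaps are $\{F_{j+1},F_{j+2},\dots\}\cup\{m_1,\dots,m_j\}$ and its positive elements are $\{m_{j+1},m_{j+2},\dots\}\cup\{F_1,\dots,F_j\}$, so the step $O^{j+1}=O^{j}\cup\{F(O^{j})\}\setminus\{m(O^{j})\}$ reproduces the formula provided $F_{j+1}>m_j$ and $F_{j}>m_{j+1}$. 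These inequalities hold because exactly $l(H)$ gaps of $H$ exceed $g(H)$ while exactly $l(H)$ positive elements are at most $g(H)$; hence $F_1>\dots>F_{l(H)}>g(H)\geq m_{l(H)}>\dots>m_1$, which forces $F_{s+1}>g(H)\geq m_s$ and $F_s>g(H)\geq m_{s+1}$ whenever $s\leq l(H)-1$. Translating $2i\leq L(H)$ (resp. $2i<L(H)$) through $L(H)\in\{2l(H),2l(H)+1\}$ keeps the induction inside this safe range, and the same formula applies verbatim to $G:=H\cup\{F(H)\}$, where $F_k(G)=F_{k+1}(H)$ and $m_k(G)=m_k(H)$ for the small indices involved, giving $O^{i-1}(G)=N_i\setminus\{m_1,\dots,m_{i-1}\}$.

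The second ingredient is a one-line computation I would isolate as a lemma: for any numerical set $B$ and any $c\in B$,
$$A^\star\big(\{x-c\mid x\in B,\ x\geq c\}\big)=A^\star\big(B\setminus\{x\in B\mid 0<x\leq c\}\big)=\{a\mid \forall x\in B,\ x>c\Rightarrow a+x\in B\}.$$
Both equalities follow directly from the definition of $A^\star$: in each case the only elements one must test are the $x\in B$ with $x>c$, the side condition $a+x\neq c$ (respectively $a+x$ avoids the deleted elements) is automatic because $a+x>c$, and subtracting $c$ is a bijection on the tails. This lemma is precisely what converts the "shift down by a multiplicity" occurring in the $T$-iterates into the "delete the small elements" occurring in the $O$-iterates.

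Finally I would assemble the pieces with $B=N_i$. For the odd case, Proposition \ref{shift mi} gives $T^{2i}(H)=\{x-m_i\mid x\in N_i,\ x\geq m_i\}$, so the lemma with $c=m_i$ yields $A^\star(T^{2i}(H))=A^\star(N_i\setminus\{m_1,\dots,m_i\})=A^\star(O^{i}(H))$, whence $A(T^{2i+1}(H))=A^\star(O^{i}(H))$. For the even case, combining Proposition \ref{shift mi} with $F(T^{2i-2}(H))=F_i(H)-m_{i-1}(H)$ shows $T^{2i-2}(H)\cup\{F(T^{2i-2}(H))\}=\{x-m_{i-1}\mid x\in N_i,\ x\geq m_{i-1}\}$, and the lemma with $c=m_{i-1}$ makes $A^\star$ of this equal to $A^\star(N_i\setminus\{m_1,\dots,m_{i-1}\})=A^\star(O^{i-1}(G))$, whence $A(T^{2i}(H))=A^\star(O^{i-1}(H\cup\{F(H)\}))$. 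The one genuinely delicate step is the $O^{j}$ formula: I must verify that the Frobenius number and multiplicity of each $O^{j}$ really are $F_{j+1}$ and $m_{j+1}$ throughout the range, i.e. that ordinarisation peels off the top gaps and the bottom elements in lockstep without the two fronts colliding. Once that bookkeeping is pinned down against $L(H)$, the remainder is formal.
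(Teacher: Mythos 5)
Your proof is correct, and its skeleton matches the paper's: both arguments rest on Proposition \ref{shift mi} together with the identification $O^{j}(H)=H\cup\{F_1(H),\dots,F_j(H)\}\setminus\{m_1(H),\dots,m_j(H)\}$. The difference is where the ``shift equals deletion'' phenomenon is cashed in. The paper pivots at the level of $T$-sets: it asserts $T(T^{2i}(H))=T(O^{i}(H))$ and then applies $A$, using $A\circ T=A^{\star}$, while for the even case it redoes the computation of Proposition \ref{A of T^2 H} directly on the shifted set. You instead prove a single shift-versus-deletion lemma at the level of $A^{\star}$ and use it for both parities, getting the even case by applying Proposition \ref{A of T^2 H} to $T^{2i-2}(H)$ (legitimate, since $L(T^{2i-2}(H))\geq 2$ makes it non-ordinary) rather than by a fresh computation; this factorization is cleaner and avoids duplicated work. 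More importantly, you supply the bookkeeping the paper leaves entirely implicit: the induction showing $F(O^{j})=F_{j+1}$ and $m(O^{j})=m_{j+1}$, justified by the chain $F_1>\dots>F_{l(H)}>g(H)\geq m_{l(H)}>\dots>m_1$, and the check that the range $2i\leq L(H)$ (resp. $2i<L(H)$) keeps the induction below the point where the two fronts could collide. That verification is genuinely needed for the paper's one-line assertion $T(T^{2i}(H))=T(O^{i}(H))$ to make sense, so your write-up is a complete, somewhat more self-contained rendering of the same argument. The only cosmetic caveat, shared equally by the paper's own Proposition \ref{A of T^2 H}, is the degenerate case where an intermediate set equals $\mathbb{N}$ (possible when $2i=L(H)$ and $l(H)=g(H)$), which is handled by reading the defining formula of $A^{\star}$ as giving $A^{\star}(\mathbb{N})=\mathbb{N}$.
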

\begin{proof}
Proposition \ref{shift mi} implies that $T(T^{2i}(H))=T(O^{i}(H))$. Therefore we have $A(T^{2i+1}(H))=A^{\star}(O^{i}(H))$.
$A(T^{2i}(H))$ is computed directly using Proposition \ref{shift mi} analogously to Proposition \ref{A of T^2 H}.
\end{proof}

\begin{corollary}
If $S$ is a numerical semigroup then
$$t(O^{i}(S))=\sum_{j=0}^{2i}t(T^{j}(S)) \leq 2g(S)-F_{i+1}(S).$$
Equality holds if and only if $T^{2i+1}(S)$ is a numerical semigroup which happens if and only if $O^{i}(S)$ is almost symmetric.
\end{corollary}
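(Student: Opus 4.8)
The plan is to reduce everything to the identities relating $O^{i}(S)$, $T^{2i}(S)$ and $T^{2i+1}(S)$ that were just established, and then to feed these into the telescoping sum of Corollary \ref{Odd}. Throughout I work in the range $2i<L(S)$, so that all the objects involved are defined, and I use the standing facts that $O^{i}(S)$ is again a numerical semigroup (as in \cite{Ordinarization transform}) and that ordinarization trades one semigroup element for one gap, so that $g(O^{i}(S))=g(S)$.

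First I would record the two inputs coming from the preceding proposition and from Proposition \ref{shift mi}: namely $A(T^{2i+1}(S))=A^{\star}(O^{i}(S))$, and, since $T(T^{2i}(S))=T(O^{i}(S))$, the identity $T^{2i+1}(S)=T(O^{i}(S))$. Because $S$ is a numerical semigroup we have $A(S)=S$, hence $g(A(S))=g(S)$; and because $O^{i}(S)$ is a numerical semigroup, $A^{\star}(O^{i}(S))=O^{i}(S)\cup PF(O^{i}(S))$ gives $g(A^{\star}(O^{i}(S)))=g(O^{i}(S))-t(O^{i}(S))=g(S)-t(O^{i}(S))$.

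Next I would prove the middle equality $t(O^{i}(S))=\sum_{j=0}^{2i}t(T^{j}(S))$ by applying the telescoping proposition with $H=S$ and $k=2i$, which gives $\sum_{j=0}^{2i}t(T^{j}(S))=g(A(S))-g(A(T^{2i+1}(S)))$. Substituting $g(A(S))=g(S)$ and $g(A(T^{2i+1}(S)))=g(S)-t(O^{i}(S))$ from the previous paragraph collapses the right-hand side to $t(O^{i}(S))$. For the inequality I would simply invoke Corollary \ref{Odd} with $H=S$: it yields $\sum_{j=0}^{2i}t(T^{j}(S))\le g(A(S))+g(S)-F_{i+1}(S)=2g(S)-F_{i+1}(S)$, with equality precisely when $T^{2i+1}(S)$ is a numerical semigroup. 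Finally, to translate this equality condition into a statement about $O^{i}(S)$, I would combine $T^{2i+1}(S)=T(O^{i}(S))$ with Theorem \ref{AS Characterisation}: the numerical semigroup $O^{i}(S)$ is almost symmetric if and only if its $T$-Set $T(O^{i}(S))=T^{2i+1}(S)$ is a numerical semigroup. Chaining these equivalences delivers the last assertion.

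The only genuinely delicate point is bookkeeping rather than ideas: one must check that $2i$ lies in the admissible range so that the intermediate transforms are non-ordinary (hence the type and genus formulas apply) and that $g(O^{i}(S))=g(S)$ is used consistently. Beyond that, every step is a direct substitution into results already proved, so I expect no real obstacle.
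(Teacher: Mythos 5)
Your proof is correct and follows essentially the same route as the paper: the telescoping proposition together with $A(T^{2i+1}(S))=A^{\star}(O^{i}(S))$ yields the middle equality, and Corollary \ref{Odd} (with $g(A(S))=g(S)$) gives the bound $2g(S)-F_{i+1}(S)$ and the equality case in terms of $T^{2i+1}(S)$. The only cosmetic difference is in the final equivalence: you invoke Theorem \ref{AS Characterisation} via the identity $T^{2i+1}(S)=T(O^{i}(S))$, whereas the paper reads off almost symmetry directly from the definition using $F(O^{i}(S))=F_{i+1}(S)$ and $g(O^{i}(S))=g(S)$; both are immediate from results already established.
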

\begin{proof}
We know that
$$t(O^{i}(S))=g(O^{i}(S))-g(A^{\star}(O^{i}(S)))
=g(S)-g(A(T^{2i+1}(S)))
=\sum_{j=0}^{2i}t(T^{j}(S)).$$
The result follows from Corollary \ref{Odd} and the fact that $F(O^{i}(S))=F_{i+1}(S)$.
\end{proof}

\section{Numerical Semigroups of small type}\label{Sec: Staircase}
We define $n(S)$ to be the size of $S\cap [0, F(S)]$, i.e. $n(S)=F(S)+1-g(S)$.
In their Theorem 20 the authors of \cite{Type} prove that
$$ \frac{g(S)}{n(S)}\leq t(S).$$
In this section we state a slight variation of their proof and then classify the numerical semigroups for which equality holds.
Given a numerical semigroup $S$ define a function $\phi_S$ from $Gap(S)$ to $S$,
$$\phi_S (x)=Max\{s\in S| x+s\in Gap(S)\}.$$

\begin{lemma}
For any numerical semigroup $S$ and $x\in Gap(S)$
\begin{enumerate}
    \item $0\leq \phi_S(x)<F(S)$
    \item $\phi_S(x)=0$ if and only if $x\in PF(S)$
    \item $x+\phi_S(x)\in PF(S)$
\end{enumerate}
\end{lemma}
\begin{proof}
The first part is clear, the second is essentially a restatement of the definition of a pseudo-Frobenius number.
For the third part, given $s\in S$ with $s\neq 0$, we know that $\phi_S(x)+s$ is in $S$ and it is bigger than $\phi_S(x)$. Therefore, the maximality of $\phi_S(x)$ implies that $x+\phi_S(x)+s$ is in $S$. Hence, $x+\phi_S(x)$ is a pseudo-Frobenius number of $S$.
\end{proof}

\begin{theorem}[From \cite{Type}]\label{g<nt}
Given a numerical semigroup $S$,
$$g(S)\leq n(S)t(S).$$
\end{theorem}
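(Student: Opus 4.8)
The plan is to use the function $\phi_S$ just defined together with the third part of the preceding lemma, which tells us that $x \mapsto x + \phi_S(x)$ sends every gap into $PF(S)$. The essential idea is to bound the number of gaps that can map to any single pseudo-Frobenius number, and then sum over all of $PF(S)$. Concretely, I would fix $P \in PF(S)$ and study the fiber $\phi_S^{-1}$-preimage, namely the set of gaps $x$ with $x + \phi_S(x) = P$. For such an $x$ we have $x = P - \phi_S(x)$ where $\phi_S(x) \in S$, so every gap mapping to $P$ has the form $P - s$ for some $s \in S$ with $0 \le s \le P$. The number of such $s$ is at most $|S \cap [0,P]| \le |S \cap [0,F(S)]| = n(S)$, since $P \le F(S)$.

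First I would record that the map $\psi(x) = x + \phi_S(x)$ is a well-defined function $Gap(S) \to PF(S)$ by the lemma. Next I would show each fiber has size at most $n(S)$: if $\psi(x_1) = \psi(x_2) = P$ with $\phi_S(x_1) = \phi_S(x_2)$ then $x_1 = x_2$, so distinct gaps in a fiber have distinct values of $\phi_S$, and those values lie in $S \cap [0, F(S)]$, a set of size $n(S)$. Summing the fiber sizes gives
\begin{equation*}
g(S) = |Gap(S)| = \sum_{P \in PF(S)} |\psi^{-1}(P)| \le \sum_{P \in PF(S)} n(S) = t(S)\, n(S),
\end{equation*}
which is exactly the claimed inequality.

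The main point requiring care, and the step I expect to be the only real obstacle, is verifying that the assignment of gaps within a fiber to elements of $S \cap [0,F(S)]$ is genuinely injective, i.e. that $\phi_S$ separates the gaps lying over a common $P$. This follows immediately from the relation $x = P - \phi_S(x)$: once $P$ and $\phi_S(x)$ are both fixed, $x$ is determined, so two gaps in the same fiber with equal $\phi_S$-value must coincide. I would also note the clean bookkeeping fact that $n(S) = F(S) + 1 - g(S)$ by definition, so the theorem is just a repackaging of the fiber count; no delicate estimate or case analysis beyond the injectivity observation is needed, and the whole argument is a single counting pass through $PF(S)$.
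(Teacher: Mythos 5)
Your proof is correct and is essentially the paper's argument: the paper packages the same idea as the single injective map $\psi_S(x)=(\phi_S(x),\,x+\phi_S(x))$ into $\left(S\cap[0,F]\right)\times PF(S)$, whereas you project onto the $PF(S)$ coordinate and count fibers, with your injectivity-on-fibers observation being exactly the injectivity of $\psi_S$. The two are trivial repackagings of one another, so nothing further is needed.
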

\begin{proof}
Consider the map $\psi_S$ from $Gap(S)$ to $\left( S\cap [0,F]\right)\times PF(S)$, $\psi_S(x)=(\phi(x),x+\phi(x))$. It is clear that $\psi_S$ is injective and hence $g(S)\leq n(S)t(S)$.
\end{proof}

\begin{lemma}\label{F-m}
Given a numerical semigroup $S$ with Frobenius number $F$ and multiplicity $m$, $[F-m+1, F]$ is a subset of $S\cup PF(S)$.
\end{lemma}
\begin{proof}
By Theorem \ref{A of T of S} we know that $S\cup PF(S)=A(T(S))$. Moreover $F(A(T(S)))=F(T(S))=F-m$ by Lemma \ref{f,g of T(S)}. The result follows.
\end{proof}

\begin{proof}[Proof of Theorem \ref{Staircase}]
First, if $S$ is symmetric then $t(S)=1$, $g(S)=n(S)$ so the equation is satisfied.
Next, for $S=\{0,m,2m\dots nm\rightarrow\}$, $F(S)=nm-1$, $n(S)=n$ and $g(S)=F(S)+1-n(S)=n(m-1)$.
Also, $PF(S)=[(n-1)m+1,nm-1]$ so $t(S)=m-1$ and $g(S)=n(S)t(S)$.

Now, assume that $S$ is a numerical semigroup that satisfies $g(S)=n(S)t(S)$, is not symmetric. Therefore, $t(S)\geq 2$ and $m(S)\geq t(S)+1$. Now, $g(S)=n(S)t(S)$ implies that the map $\psi_S$ from proof of Theorem \ref{g<nt} must be surjective. Therefore, for any $s\in S\cap [1, F(S)]$ and $P\in PF(S)$, $s<P$. Moreover, the map $\lambda_S$ from
$\left( S\cap [0,F]\right)\times PF(S)$
to $Gap(S)$, $\lambda_S(s,P)=P-s$ is injective as it is the inverse of $\psi_S$.

By Lemma \ref{F-m} the interval $[F-(m-1),F]$ consists entirely of elements of $S$ and its pseudo-Frobenius numbers.
It has $m$ consecutive numbers and hence contains a multiple of $m$ which will be in $S$.
This tells us that $PF(S)=[F-(t-1),F]$ and $[F-(m-1), F-t]\subseteq S$.

If $m\geq t+2$ then $F-(t+1)$ will be in $S$. But then
$$\lambda_S(F-(t+1),F-(t-1))=2=\lambda_S(F-t,F-(t-2))$$
which contradicts the fact that $\lambda_S$ is injective. Therefore, $m=t+1$.

Now since every number in the interval $[F-(m-2), F]$ is a gap (in fact a pseudo-Frobenius number) none of them can be a multiple of $m$. Therefore, $F\equiv -1(mod\; m)$ say $F=nm-1$.

Finally if any $s\in S\cap [0,F]$ is not a multiple of $m$, then say $s+r\equiv -1 (mod\;m)$ with $0\leq r\leq m-2$. Then $F-r\equiv s(mod\;m)$ and $F-r\geq F-(m-2)> s$ i.e. $F-r=s+km$ for some $k\geq 1$, but this implies that $F-r\in S$ which is a contradiction. Therefore, $S\cap [0, F]$ consists entirely of multiples of $m$ and hence $S=\{0,m,2m\dots nm\rightarrow\}$.
\end{proof}

\section{Numerical semigroups of type $\alpha$}
In this section we will give lower bounds for number of numerical semigroups of given type and Frobenius number. We will do so by constructing explicit families of numerical semigroups.
Numerical semigroups of type $1$ always have an odd Frobenius number. The authors of \cite{Type} showed that for odd $F$ the number of type $1$ numerical semigroups grows exponentially and is at least $2^{\left\lfloor\frac{F}{8}\right\rfloor}$.
In \cite{Backelin} the following theorem is proved. 

\begin{theorem}
For $i\in\{1,3,5\}$ the following limit exist and are positive
$$\lim_{F\equiv i(mod\;6), F\to\infty} \frac{T(F,1)} {2^{\frac{F}{6}}}.$$
\end{theorem}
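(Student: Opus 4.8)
The plan is to interpret $T(F,1)$ combinatorially and then carry out a three-scale analysis in the spirit of Backelin. First I would record that a numerical semigroup has type $1$ exactly when it is symmetric: the inequalities $\frac{g(S)}{F(S)+1-g(S)}\le t(S)\le 2g(S)-F(S)$ recorded in the introduction force $2g(S)=F(S)+1$ when $t(S)=1$, which is the symmetry condition $x\in S\iff F(S)-x\notin S$ for every integer $x$. In particular such $S$ force $F$ odd, which is exactly why only the residues $i\in\{1,3,5\}$ occur. The involution $x\mapsto F-x$ then shows that $S$ is completely determined by $S\cap[1,(F-1)/2]$, subject only to closure under addition, so the problem becomes: count the subsets of $[1,(F-1)/2]$ whose symmetric completion is additively closed.

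Next I would split $[1,F]$ into the three thirds $I_1=[1,F/3]$, $I_2=(F/3,2F/3)$, $I_3=[2F/3,F]$ and exploit that any two elements exceeding $F/2$ sum past $F$ and hence lie in $S$ automatically. Consequently no additive constraint arises from adding two elements of $I_2\cup I_3$, and all genuine constraints couple an element of the small third $I_1$ to the rest. The symmetry pairs $I_2\cap(F/3,F/2)$ with $(F/2,2F/3)$, and the roughly $F/6$ pairs $\{x,F-x\}$ with $x\in(F/3,F/2)$ are the source of the growth: for most such pairs either choice extends to a valid semigroup, yielding the base count $2^{F/6}$, with only a bounded band near the boundary $F/3$ exceptional, where a sum of two middle elements can fall back into $I_3$ and must itself be respected.

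To upgrade the order-of-magnitude estimate $T(F,1)=\Theta(2^{F/6})$ to an actual limit I would organize the count by the finite \emph{germ} of $S$ near the origin, namely $S\cap[1,cF]$ for a small fixed fraction $c$. I expect to show that each admissible germ $G$ contributes $\kappa(G)\,2^{F/6}\bigl(1+o(1)\bigr)$ extensions, where $\kappa(G)\ge 0$ depends only on $G$ and on $F\bmod 6$ (through the rounding of $F/2$ and $F/3$), and that $\sum_G\kappa(G)$ converges; summing then produces the constant $c_i=\sum_G\kappa(G)$. An alternative route to mere existence of the limit is a Fekete-type subadditivity argument obtained by concatenating the small-scale data of two semigroups, while positivity $c_i>0$ is secured by exhibiting one explicit family of $\gtrsim 2^{F/6}$ valid symmetric semigroups, freely toggling the non-exceptional middle pairs above a fixed admissible germ.

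The main obstacle is precisely this interaction between the small third $I_1$ and the middle third: controlling it tightly enough to prove that the \emph{limit} exists, rather than only that $\liminf>0$ and $\limsup<\infty$. This requires showing that the exceptional band near $F/3$ contributes only a bounded multiplicative factor uniformly in $F$, and that the per-germ contributions $\kappa(G)$ stabilize and are summable. Matching the lower and upper bounds to the \emph{same} constant, separately for each residue class $i\in\{1,3,5\}$, is the delicate heart of the theorem.
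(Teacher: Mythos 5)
Your reduction is sound as far as it goes: type $1$ is equivalent to symmetry (the two bounds force $2g(S)=F(S)+1$), symmetry forces $F$ odd, a symmetric semigroup is determined by $S\cap[1,(F-1)/2]$, and the pairs $\{x,F-x\}$ with $x\in(F/3,F/2)$ are the source of the $2^{F/6}$ growth. Taking the bottom third empty and toggling those pairs freely does give a valid family, so $\liminf T(F,1)/2^{F/6}>0$ along each residue class. But this is not the theorem. The theorem asserts that the normalized count \emph{converges} to a positive constant, and that is exactly the part your proposal defers: the germ decomposition, the claim that each germ $G$ contributes $\kappa(G)\,2^{F/6}(1+o(1))$ with errors uniform over germs, the summability of $\sum_G\kappa(G)$, and the matching of upper and lower bounds to the same constant are all stated as things you ``expect to show.'' Even the upper bound $T(F,1)\le C\,2^{F/6}$ is not actually argued: you assert that only a bounded band near $F/3$ is exceptional, but semigroups whose germ contains many small elements impose a web of constraints on the middle third whose effect on the count must be bounded by a genuine argument, not by inspection. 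This deferred analysis is the entire content of Backelin's paper, which is also all the cited source for this theorem in the present paper provides (the paper itself gives no proof, only the reference).

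One of your two suggested routes to existence of the limit is moreover structurally incapable of proving the statement. A Fekete-type subadditivity argument, even if a concatenation of symmetric semigroups could be made to work, would only yield existence of $\lim \frac{1}{F}\log_2 T(F,1)=\frac{1}{6}$, i.e., the exponential growth rate. The theorem is strictly sharper: it claims $T(F,1)=\bigl(c_i+o(1)\bigr)2^{F/6}$ with $c_i>0$ depending on $F\bmod 6$, and convergence of the ratio $T(F,1)/2^{F/6}$ cannot follow from subadditivity alone, since a subexponential but unbounded or oscillating prefactor is consistent with any subadditivity estimate. So the only viable route in your sketch is the germ-summation one, and that is precisely where the proof is missing rather than merely compressed.
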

\begin{proof}
See \cite{Backelin}.
\end{proof}

We will now look at type at least $2$ and start with constructing a family of almost symmetric numerical semigroups of a given type.

\begin{theorem}\label{construction AS type t}
Assume $F>6k+6$, let $A$ be subset of $(\frac{F}{3}, \lfloor\frac{F-1}{2}\rfloor -k)$
$$B=\left\{x\mid \left\lceil\frac{F+1}{2}\right\rceil+k<x<F, F-x\not\in A \right\}$$
$$S=\{0\}\cup A\cup B\cup \{F+1\rightarrow\}$$
Then $S$ is an almost symmetric numerical semigroup with Frobenius number $F$.
Moreover if $F$ is odd then $t(S)=2k+1$ and if $F$ is even then $t(S)=2k+2$.
\end{theorem}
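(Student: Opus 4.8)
The plan is to establish the four claims in order: that $S$ is closed under addition, that its Frobenius number is $F$, that its pseudo-Frobenius set is a symmetric central band together with $F$ (which both pins down the type and forces almost symmetry), and finally to count that band.

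First I would check closure. Every nonzero element of $S$ lying in $[1,F]$ sits in $A\subseteq(\tfrac F3,\lfloor\tfrac{F-1}2\rfloor-k)$ or in $B\subseteq(\lceil\tfrac{F+1}2\rceil+k,F)$, so every such element exceeds $\tfrac F3$ and every element of $B$ exceeds $\tfrac F2$. Adding two elements of $S\cap[1,F]$ therefore gives a sum exceeding $\tfrac{2F}3$, and I would split into the cases $A+A$, $A+B$, $B+B$. In each case the sum is either $>F$ (hence in $S$) or lands in the range of $B$, and the reflected condition $F-x\notin A$ built into $B$ is exactly what guarantees both that such a sum lies in $B$ and that no sum equals $F$: the only way to have $a+b=F$ would force $b\in F-A$, which $B$ excludes. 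Thus $S$ is a numerical semigroup, and since $F\notin A\cup B\cup\{F+1\rightarrow\}$ while everything above $F$ is in $S$, we get $F(S)=F$.

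The key observation for the type is that any gap $x$ with $F-x\in S$ cannot be pseudo-Frobenius, because then $x+(F-x)=F\notin S$. Hence $PF(S)$ is contained in $\{F\}$ together with the set $M$ of gaps $x$ for which $F-x$ is also a gap; by the interval placement, $M$ is precisely the block of integers lying strictly between the top of the allowed range for $A$ and the bottom of the range for $B$, a set symmetric about $\tfrac F2$. I would then prove the reverse inclusion, which I expect to be the main obstacle: for $x\in M$ and any nonzero $s\in S$ one has $s>\tfrac F3$, so $x+s>\tfrac{5F}6-k$, and I must show $x+s\in S$, i.e. that $x+s$ is neither $F$ nor one of the upper gaps $F-A$. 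This is where the hypothesis $F>6k+6$ and the defining condition of $B$ are used: the band sits far enough below $F$ that $x+s$ overshoots into $B\cup\{F+1\rightarrow\}$, and the exclusion of $F-A$ from $B$ together with $x\in M$ rules out landing on $F$ or on a reflected point of $A$. Granting this, $PF(S)=M\cup\{F\}$ and $t(S)=|M|+1$.

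Finally I would count. Because $B$ is exactly its interval with the reflection $F-A$ deleted, and $F-A$ lies inside that interval, the quantity $|A|+|B|$ equals the length of the $B$-interval and is independent of the choice of $A$; this gives $g(S)=F-(|A|+|B|)$ explicitly in terms of $F$ and $k$, and likewise $|M|$ depends only on $F$ and $k$. Computing both and checking $t(S)=|M|+1=2g(S)-F$ simultaneously verifies that $S$ is almost symmetric (this being the definition) and yields the stated value of $t(S)$, with the two parity cases arising from the floors and ceilings in the bounds on $A$ and $B$. Alternatively, almost symmetry can be obtained directly from Theorem~\ref{AS Characterisation} by checking that $T(S)$, whose gaps are $F-(S\cap[1,F])$, is closed under addition; this is again clean because all positive elements of $T(S)$ are large relative to $F(T(S))=F-m(S)$.
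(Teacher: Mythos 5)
Your proposal is correct and follows essentially the same route as the paper's proof: the same closure argument (every nonzero element of $S\cap[1,F]$ exceeds $\frac{F}{3}$, so sums exceed $\frac{2F}{3}$, and the condition $F-x\notin A$ built into $B$ both places such sums in $B$ and rules out the value $F$), and the same identification $PF(S)=M\cup\{F\}$ with $M$ the central band, via the same inequality $x+s>\frac{2F}{3}$ that uses $F>6k+6$. The one place you go beyond the paper is the final step: the paper's proof stops after displaying $PF(S)$ and reads off the type, never explicitly checking the almost-symmetry identity $t(S)=2g(S)-F(S)$; your plan to compute $g(S)$ (via the observation that $F-A$ sits inside the $B$-interval, so $|A|+|B|$ is independent of the choice of $A$) and verify $t(S)=|M|+1=2g(S)-F$ directly fills that in, as would your alternative route through Theorem \ref{AS Characterisation}. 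One caveat when you carry out that count: reading the hypotheses literally ($A$ inside an open interval, strict inequalities defining $B$), the band is $M=\left[\lfloor\frac{F-1}{2}\rfloor-k,\,\lceil\frac{F+1}{2}\rceil+k\right]$, which has $2k+2$ elements for odd $F$ and $2k+3$ for even $F$, so $t(S)=|M|+1$ comes out as $2k+3$ and $2k+4$ respectively --- still equal to $2g(S)-F$ (so almost symmetry does hold), but two more than the values claimed in the theorem; the claimed values correspond to letting $A$ reach $\lfloor\frac{F-1}{2}\rfloor-k$ and $B$ start at $\lceil\frac{F+1}{2}\rceil+k$. This off-by-one in interval conventions is present in the paper's own proof as well (its displayed $PF(S)$, in which $\{0\}$ is an evident typo for $\{F\}$, has the same cardinality), so it is not a defect of your approach; just be aware that your concluding verification will not literally ``yield the stated value of $t(S)$'' unless the conventions are adjusted.
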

\begin{proof}
We first show that $S$ is a numerical semigroup for which we need to check that $S$ is closed under addition. Consider $x,y\in S$, assume that $x,y\neq 0$ and $x+y\leq F$ as otherwise we have nothing to prove. Also without loss of generality assume $x\leq y$, which implies that $x<\frac{F}{2}$ and hence $x\in A$. We know that $x+y>\frac{2F}{3}>\left\lceil\frac{F+1}{2}\right\rceil+k$, which implies that $F-(x+y)<\frac{F}{3}$ and hence $F-(x+y)\not\in A$. We know $x+y$ cannot equal $F$, so $x+y<F$. It follows that $x+y\in B$.

Now, given a number $x$ such that $\lfloor\frac{F-1}{2}\rfloor -k\leq x\leq \left\lceil\frac{F+1}{2}\right\rceil+k$, we firstly know that $x$ is not in $S$. We will show that $x$ is a pseudo-Frobenius number of $S$.
Consider for a non-zero $s$ in $S$, assume that $x+s\leq F$ as otherwise we have nothing to prove.
Note that $x+s$ cannot be $F$, so $x+s<F$.
Moreover,
$$x+s>\frac{F}{3}+ \left\lfloor\frac{F-1}{2}\right\rfloor -k\geq \frac{F}{3}+\frac{F}{2}-1-k>\frac{5F}{6}-\frac{F}{6}=\frac{2F}{3}$$
So, $F-(x+s)<\frac{F}{3}$ and $F-(x+s)\not\in A$.
Also it can be checked that $F>6k+6$ implies that $\left\lceil\frac{F+1}{2}\right\rceil+k< \frac{2F}{3}$.
Therefore, $\left\lceil\frac{F+1}{2}\right\rceil+k<x+s<F$ and hence $x+s\in B$. This shows that $x$ is a pseudo-Frobenius number of $S$.

Next, for any other $x$ in $[1, F-1]$, either $x\in S$ or $F-x\in S$, either way $x$ cannot be a pseudo-Frobenius number. Therefore 
$$PF(S)=\left[\left\lfloor\frac{F-1}{2}\right\rfloor -k, \left\lceil\frac{F+1}{2}\right\rceil+k\right]\cup\{0\}$$
It is now clear that if $F$ is odd then $t(S)=2k+1$ and if $F$ is even then $t(S)=2k+2$.
\end{proof}
\begin{corollary}\label{AS lower bound}
For a fixed positive integer $\alpha$ and $F> 3\alpha+3$ such that $F\equiv \alpha (mod\;2)$
$$T_1(F,\alpha)\geq 2^{\frac{F}{6}-\frac{\alpha}{2}}.$$
\end{corollary}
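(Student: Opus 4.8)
The plan is to deduce this directly from Theorem \ref{construction AS type t} by choosing the parameter $k$ so that the constructed semigroups have type exactly $\alpha$, and then counting how many admissible sets $A$ there are. Since $F\equiv\alpha\pmod 2$, I would split into two cases according to the parity of $\alpha$. If $\alpha$ is odd then $F$ is odd, and I set $k=\frac{\alpha-1}{2}$, so that Theorem \ref{construction AS type t} produces almost symmetric semigroups of type $2k+1=\alpha$; here $6k+6=3\alpha+3<F$, so the hypothesis $F>6k+6$ holds. If $\alpha$ is even then $F$ is even, and I set $k=\frac{\alpha}{2}-1$, giving type $2k+2=\alpha$ and $6k+6=3\alpha<3\alpha+3<F$, so again the hypothesis is satisfied. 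In both cases $k\geq 0$, and the standing assumption $F>3\alpha+3$ is exactly what is needed to invoke the theorem.

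With $k$ fixed, Theorem \ref{construction AS type t} attaches to every subset $A$ of the open interval $\left(\frac{F}{3},\lfloor\frac{F-1}{2}\rfloor-k\right)$ an almost symmetric numerical semigroup $S$ with $F(S)=F$ and $t(S)=\alpha$. The next step is to verify that distinct choices of $A$ yield distinct semigroups, so that each contributes separately to $T_1(F,\alpha)$. This follows from the shape of the construction $S=\{0\}\cup A\cup B\cup\{F+1\rightarrow\}$: the defining interval of $A$ ends at $\lfloor\frac{F-1}{2}\rfloor-k<\lceil\frac{F+1}{2}\rceil+k$ and begins above $0$, so it is disjoint from $B$, from $\{0\}$, and from $\{F+1\rightarrow\}$. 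Hence $A$ is recovered from $S$ as $A=S\cap\left(\frac{F}{3},\lfloor\frac{F-1}{2}\rfloor-k\right)$, the map $A\mapsto S$ is injective, and the number of semigroups produced is exactly $2^{N}$, where $N$ is the number of integers strictly between $\frac{F}{3}$ and $\lfloor\frac{F-1}{2}\rfloor-k$.

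It then remains to bound $N$ from below. A short computation with the chosen values of $k$ shows that in both parity cases the right endpoint simplifies ($\frac{F-\alpha}{2}$ when $F$ is odd, $\frac{F}{2}-\frac{\alpha}{2}$ when $F$ is even), so that the length of the interval is in each case
$$\left(\left\lfloor\frac{F-1}{2}\right\rfloor-k\right)-\frac{F}{3}=\frac{F}{6}-\frac{\alpha}{2}.$$
Combining the interior lattice-point count with the injectivity then gives
$$T_1(F,\alpha)\geq 2^{N}\geq 2^{\frac{F}{6}-\frac{\alpha}{2}}.$$
I expect the only delicate point of the whole argument to be this final lattice-point estimate: because the right endpoint $\lfloor\frac{F-1}{2}\rfloor-k$ is an integer (and so is excluded) while $\frac{F}{3}$ may or may not be, one must track the floor functions and the residue of $F$ modulo $6$ carefully to confirm that $N$ meets the stated exponent $\frac{F}{6}-\frac{\alpha}{2}$. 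Everything else reduces to a direct application of Theorem \ref{construction AS type t} together with the elementary injectivity observation above.
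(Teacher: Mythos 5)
Your overall route --- fix $k$ by parity so that the construction yields type $\alpha$, recover $A$ from $S$ to get injectivity, then count subsets --- is exactly the argument the paper intends for this corollary, and your first two steps are fine. The gap is precisely the point you flagged as ``delicate'' and then asserted anyway: the lattice-point estimate fails. With your choices of $k$ the right endpoint is $\lfloor\frac{F-1}{2}\rfloor-k=\frac{F-\alpha}{2}$, an integer, and it is excluded from the open interval, so the number of integers available to $A$ is
$$N=\frac{F-\alpha}{2}-1-\left\lfloor\frac{F}{3}\right\rfloor,$$
which is \emph{always strictly smaller} than the length $\frac{F}{6}-\frac{\alpha}{2}$: it equals $\frac{F}{6}-\frac{\alpha}{2}-1$ when $3\mid F$, and $\frac{F}{6}-\frac{\alpha}{2}-\frac{2}{3}$ or $\frac{F}{6}-\frac{\alpha}{2}-\frac{1}{3}$ in the other residue classes. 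Concretely, for $F=21$, $\alpha=1$ (so $k=0$) the interval is $(7,10)$, $A$ ranges over subsets of $\{8,9\}$, and the construction produces only $2^{2}=4$ semigroups, short of the claimed $2^{3}=8$. So the chain $T_1(F,\alpha)\geq 2^{N}\geq 2^{F/6-\alpha/2}$ breaks at its last link; as written your argument only yields $T_1(F,\alpha)\geq 2^{\frac{F}{6}-\frac{\alpha}{2}-1}$.

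Moreover, the shortfall cannot be repaired while taking Theorem \ref{construction AS type t} at face value, because the type formula printed there is itself off by two: $PF(S)$ consists of the middle interval \emph{together with $F$ itself} (the ``$\cup\{0\}$'' in the paper's proof should read ``$\cup\{F\}$''), so the constructed $S$ has type $2k+3$ for odd $F$ and $2k+4$ for even $F$. (Check $F=13$, $k=0$, $A=\{5\}$: then $S=\{0,5,9,10,11,12,14\rightarrow\}$ has $PF(S)=\{6,7,13\}$, type $3$, not $1$; in particular your semigroups actually have type $\alpha+2$.) The correct choices are $k=\frac{\alpha-3}{2}$ for odd $\alpha$ and $k=\frac{\alpha}{2}-2$ for even $\alpha$. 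With these, the right endpoint becomes $\frac{F-\alpha}{2}+1$, the open interval contains the integers $\lfloor\frac{F}{3}\rfloor+1,\dots,\frac{F-\alpha}{2}$, hence $N=\frac{F-\alpha}{2}-\lfloor\frac{F}{3}\rfloor\geq\frac{F}{6}-\frac{\alpha}{2}$, and the stated bound follows (for $\alpha\geq 3$ odd, $\alpha\geq 4$ even; $\alpha=1,2$ would need a separate argument). In short: you followed the paper's intended approach faithfully, but the final count genuinely fails with the stated constants, and closing it requires correcting the type formula of the construction and shifting $k$ down by one.
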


\begin{theorem}\label{construction type t}
Pick an irrational $\beta$ such that $\frac{2}{5}<\beta<\frac{1}{2}$.
And for $F>\text{Max }(\frac{k+1}{5\beta -2},\frac{k}{1-2\beta})$, pick a subset $A$ of $(\beta F,\frac{F}{2})$.
Let
$$B=\left\{x\mid \frac{F}{2}<x< (1-\beta)F, F-x\not\in A\right\},$$
$S=\{0\}\cup A\cup B
\cup \big[\lceil(1-\beta)F\rceil, \lfloor2\beta F\rfloor-k\big]
\cup \big[\lceil 2\beta F\rceil,F-1\big]
\cup \{F+1\rightarrow\}.$
Then $S$ is a numerical semigroup of Frobenius number $F$. If $F$ is odd then $t(S)=k+1$ and if $F$ is even then $t(S)=k+2$.
\end{theorem}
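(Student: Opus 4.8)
The plan is to follow the template of the proof of Theorem~\ref{construction AS type t}: first verify closure to get a numerical semigroup, read off the Frobenius number, and then compute $PF(S)$ region by region.

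First I would check that $S$ is closed under addition. Take nonzero $x,y\in S$ with $x\le y$ and $x+y\le F$ (otherwise $x+y\ge F+1\in\{F+1\rightarrow\}\subseteq S$ and there is nothing to prove). Since $x\le y$ forces $x\le F/2$, and the only part of $S$ below $F/2$ is $A$, we get $x\in A$, so $x>\beta F$; hence $y\ge x>\beta F$ and $x+y>2\beta F$. If $x+y=F$ then $y=F-x\in(\tfrac{F}{2},(1-\beta)F)$, so $y\in S$ would force $y\in B$, i.e. $x=F-y\notin A$, contradicting $x\in A$; thus $x+y\ne F$ and $2\beta F<x+y<F$, which places $x+y$ in the block $[\lceil 2\beta F\rceil,F-1]\subseteq S$. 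This gives closure, and since $F\notin S$ while $\{F+1\rightarrow\}\subseteq S$, the Frobenius number is $F$.

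The heart of the argument is computing $PF(S)$. The key reduction is that if $x$ is a gap with $F-x\in S\setminus\{0\}$, then $x+(F-x)=F\notin S$, so $x\notin PF(S)$. Using the defining symmetry of $B$ (for $a\in(\beta F,F/2)$ exactly one of $a,\,F-a$ lies in $S$) this eliminates every gap in $(\beta F,(1-\beta)F)\setminus\{F/2\}$, and every small gap whose reflection lies in $S$. The only gaps $x\in[1,F-1]$ for which $F-x$ is \emph{also} a gap are: the $k$ plateau gaps $P\in[\lfloor 2\beta F\rfloor-k+1,\lfloor 2\beta F\rfloor]$, their reflections $F-P$ (which the hypotheses place below $\beta F$), and, when $F$ is even, the point $F/2$. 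Together with $F$ these are the only candidates. I would then confirm the genuine pseudo-Frobenius numbers: for a plateau gap $P$ and nonzero $s\in S$ we have $s>\beta F$, so $P+s>3\beta F-k>F$ (using $F>\tfrac{k}{1-2\beta}>\tfrac{k}{3\beta-1}$), hence $P+s\in S$; and when $F$ is even, for nonzero $s\in S$ we get $\tfrac{F}{2}+s>(\tfrac12+\beta)F\ge 2\beta F$, so $\tfrac{F}{2}+s$ is either $>F$ or lands in $[\lceil 2\beta F\rceil,F-1]$, and it cannot equal $F$ (that would need $s=\tfrac{F}{2}\notin S$). With $F$ itself this already gives $t(S)\ge k+1$ for odd $F$ and $\ge k+2$ for even $F$.

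The main obstacle is the reverse direction: showing each reflection $x=F-P$ is \emph{not} pseudo-Frobenius, which is exactly where the two-part bound on $F$ is used. For such an $x$ I would exhibit $s\in S$ with $x+s$ a gap by pushing $x$ into the plateau: I seek $s$ in the guaranteed block $[\lceil(1-\beta)F\rceil,\lfloor 2\beta F\rfloor-k]\subseteq S$ with $x+s\in[\lfloor 2\beta F\rfloor-k+1,\lfloor 2\beta F\rfloor]$. The admissible $s$ form an interval of $k$ consecutive integers, and it meets the block precisely when $2\lfloor 2\beta F\rfloor-F\le\lfloor 2\beta F\rfloor-k$ and $2\lfloor 2\beta F\rfloor-F-k+1\ge\lceil(1-\beta)F\rceil$. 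The first inequality is equivalent to $\lfloor 2\beta F\rfloor\le F-k$, secured by $F>\tfrac{k}{1-2\beta}$; the second, after accounting for the fractional parts of $2\beta F$ and $(1-\beta)F$ (here irrationality of $\beta$ guarantees $2\beta F,(1-\beta)F\notin\mathbb{Z}$), reduces to a lower bound of the shape $(5\beta-2)F\ge k+O(1)$ secured by $F>\tfrac{k+1}{5\beta-2}$. Carrying out this interval bookkeeping is the only delicate step; everything else is routine. Excluding the reflections then yields $PF(S)=\{F\}\cup[\lfloor 2\beta F\rfloor-k+1,\lfloor 2\beta F\rfloor]$ for odd $F$ and $PF(S)=\{F,\tfrac{F}{2}\}\cup[\lfloor 2\beta F\rfloor-k+1,\lfloor 2\beta F\rfloor]$ for even $F$, giving $t(S)=k+1$ and $t(S)=k+2$ respectively.
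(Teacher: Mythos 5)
Your proposal is correct and takes essentially the same route as the paper: closure via $x+y>2\beta F$, direct verification that the $k$ plateau gaps $[\lfloor 2\beta F\rfloor -k+1,\lfloor 2\beta F\rfloor]$ (plus $F/2$ when $F$ is even, plus $F$) are pseudo-Frobenius, and elimination of each reflection $x=F-P$ by producing $s$ in the block $[\lceil(1-\beta)F\rceil,\lfloor 2\beta F\rfloor -k]$ with $x+s=\lfloor 2\beta F\rfloor$, which is exactly the paper's choice $s=2\lfloor 2\beta F\rfloor -F-i$. The two hypotheses $F>\frac{k}{1-2\beta}$ and $F>\frac{k+1}{5\beta-2}$ enter in the same places in both arguments, and your interval bookkeeping (including the fractional-part estimate showing $(5\beta-2)F>k+1$ suffices) does close as you claim.
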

\begin{proof}
We show that $S$ is closed under addition, consider $x,y\in S$, assume that $x,y\neq 0$ and $x+y\leq F$ as otherwise we have nothing to prove. Clearly $x+y\neq F$, so $x+y<F$. 
We know that $x,y>\beta F$ and therefore $x+y>2\beta F$ which means that $x+y\in \big[\lceil 2\beta F\rceil,F-1\big]$.

Now for $0\leq i\leq k-1$, $\lfloor2\beta F\rfloor-i\not\in S$. Moreover for any non-zero $s\in S$, $s> \beta F$ and hence $\lfloor2\beta F\rfloor-i+s>2\beta F-k+\beta F>F$. This means that $\lfloor2\beta F\rfloor-i$ is a pseudo-Frobenius number of $S$.

Let $n=F-(\lfloor2\beta F\rfloor-i)$ and let $s=\lfloor2\beta F\rfloor-n
=2\lfloor2\beta F\rfloor-F-i$.
This means that $s>(4\beta-1)F-(k-1)-2>(1-\beta)F$.
Also $s<(4\beta-1)F< 2\beta -k$, which means $s\in \big[\lceil(1-\beta)F\rceil, \lfloor2\beta F\rfloor-k\big]$. So we have $s\in S$, $s\neq 0$ and $n+s=\lfloor2\beta F\rfloor\not\in S$. Therefore, $n$ is not a pseudo-Frobenius number of $S$.

If $F$ is even, then for any non-zero $s\in S$, $s>\beta F$ so $\frac{F}{2}+s>(\frac{1}{2}+\beta)F>2\beta F$. Clearly $\frac{F}{2}+s\neq F$, so $\frac{F}{2}+s\in S$. Therefore $\frac{F}{2}$ is a pseudo-Frobenius number of $S$.

Finally for any other $x$ in $[1, F-1]$ either $x\in S$ or $F-x\in S$, which means that $x$ cannot be a pseudo-Frobenius number of $S$. It follows that if $F$ is odd then type of $S$ is $k+1$ and if $F$ is even then type of $S$ is $k+2$.
\end{proof}
\begin{corollary}\label{lower bound type alpha}
For a fixed positive integer $\alpha$ and $\epsilon>0$, for sufficiently large $F$
$$T(F,\alpha)>2^{(0.1-\epsilon)F}.$$
\end{corollary}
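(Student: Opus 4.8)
The plan is to treat the family produced by Theorem \ref{construction type t} as an injective source of type-$\alpha$ numerical semigroups and simply count it, choosing the free parameters to force the type to equal $\alpha$ and to push the exponent up to $0.1$. Fix $\alpha$ and $\epsilon>0$. First I would match the type to the parity of $F$: for odd $F$ set $k=\alpha-1$ and for even $F$ set $k=\alpha-2$, so that in either case Theorem \ref{construction type t} returns a semigroup with $t(S)=\alpha$. (When $\alpha=1$ and $F$ is even the choice $k=\alpha-2$ is unavailable, but this is harmless: no symmetric, i.e.\ type-$1$, semigroup has even Frobenius number, and for odd $F$ the case $\alpha=1$ is already covered by the stronger bound $2^{\lfloor F/8\rfloor}$ of \cite{Type} or Backelin's $2^{F/6}$.) I would also fix once and for all an irrational $\beta$ with $\tfrac25<\beta<\tfrac12$. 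Since $k$ and $\beta$ are then constants, the hypothesis $F>\max\!\big(\tfrac{k+1}{5\beta-2},\tfrac{k}{1-2\beta}\big)$ is a fixed bound and holds for all large $F$.

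Next I would count the distinct semigroups. For every subset $A$ of the integers in the open interval $(\beta F,\tfrac F2)$, Theorem \ref{construction type t} produces a numerical semigroup $S$ with $F(S)=F$ and $t(S)=\alpha$. The essential point is that the assignment $A\mapsto S$ is injective: every other block used to assemble $S$ lies above $\tfrac F2$, since $B\subseteq(\tfrac F2,(1-\beta)F)$ and the remaining intervals begin at $\lceil(1-\beta)F\rceil>\tfrac F2$ and $\lceil 2\beta F\rceil>\tfrac F2$. Hence $A=S\cap(\beta F,\tfrac F2)$ is recoverable from $S$, and distinct choices of $A$ give distinct semigroups. Writing $N=\lvert\mathbb{Z}\cap(\beta F,\tfrac F2)\rvert$, this yields $T(F,\alpha)\geq 2^{N}$.

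Finally I would optimise the constant. As $\beta$ is irrational, $\beta F\notin\mathbb{Z}$ and the interval of length $(\tfrac12-\beta)F$ contains at least $(\tfrac12-\beta)F-1$ integers, so $N\geq(\tfrac12-\beta)F-1$. Choosing $\beta$ close enough to $\tfrac25$ that $\tfrac12-\beta>0.1-\tfrac\epsilon2$ then gives
$$T(F,\alpha)\geq 2^{N}\geq 2^{(0.1-\epsilon/2)F-1}>2^{(0.1-\epsilon)F}$$
once $F$ is large enough that $\tfrac\epsilon2 F>1$. The genuinely delicate part is not the counting, which is routine, but arranging the exponent to reach $0.1$: the ceiling $1/2-\beta<1/2-2/5=1/10$ forces the loss of the $\epsilon$, and is exactly why $\beta$ is permitted to approach the endpoint $\tfrac25$ and why Theorem \ref{construction type t} was set up so the free interval $(\beta F,\tfrac F2)$ has length tending to $\tfrac{F}{10}$.
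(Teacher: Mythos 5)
Your proposal is correct and is exactly the argument the paper intends: the corollary is stated without proof immediately after Theorem \ref{construction type t}, and the implicit justification is precisely your count of the $2^{N}$ choices of $A$ (with $N=\lvert\mathbb{Z}\cap(\beta F,\tfrac{F}{2})\rvert$), recovered injectively as $A=S\cap(\beta F,\tfrac{F}{2})$, together with letting the irrational $\beta$ approach $\tfrac{2}{5}$ and choosing $k=\alpha-1$ or $k=\alpha-2$ according to the parity of $F$. Your parenthetical treatment of the case $\alpha=1$ with even $F$ also correctly flags the one spot where the corollary's literal statement needs a parity caveat (there $T(F,1)=0$), a point the paper itself glosses over.
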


\section{Some further observations and conjectures}\label{Conjecture}

In this section we describe several observations about counting numerical semigroups of of a given type which are based on numerical evidence.
We start by looking at numerical semigroups of a given Frobenius number and looking at how many of them have odd type, how many have even type.
Table \ref{tab:parity of t given F} has these values for $F\leq 30$, based on this we make the following conjectures.

\begin{conjecture}\label{same parity}
For every $F$
$$\#\{S\mid F(S)=F, t(S)\equiv F(mod\;2)\}>
\#\{S\mid F(S)=F, t(S)\not\equiv F(mod\;2)\}.$$
\end{conjecture}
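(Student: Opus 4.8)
The plan is to recast the conjecture as a statement about the parity of a single nonnegative statistic and then to attack it with a sign-reversing involution. Write $d(S)=2g(S)-F-t(S)$ for the \emph{almost-symmetry defect} of a semigroup $S$ with $F(S)=F$; by Corollary \ref{genus of T of S} and Lemma \ref{f,g of T(S)} this equals $g(A(T(S)))-g(T(S))\ge 0$, and it vanishes exactly on the almost symmetric semigroups (Theorem \ref{AS Characterisation}). Since $2g(S)$ is even, $t(S)\equiv F\pmod 2$ if and only if $d(S)$ is even, so the conjecture is equivalent to
\[
\#\{S:F(S)=F,\ d(S)\text{ even}\}>\#\{S:F(S)=F,\ d(S)\text{ odd}\},
\]
that is, to the signed count $\sum_{F(S)=F}(-1)^{d(S)}$ being strictly positive.

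A useful reformulation of the statistic comes from $B(S)=\{x:x\notin S,\ F-x\notin S\}$, which has $|B(S)|=2g(S)-F-1$. If $x\in PF(S)$ and $x\neq F$ then $F-x$ is a nonzero element that cannot lie in $S$ (else $x+(F-x)=F\in S$), so $PF(S)\setminus\{F\}\subseteq B(S)$; counting then gives $d(S)=|B(S)\setminus PF(S)|$. Thus the odd-defect semigroups are precisely those with an odd number of ``extra holes'' $x$ for which both $x$ and $F-x$ are gaps but $x$ is not a pseudo-Frobenius number. The involution $\sigma\colon x\mapsto F-x$ preserves $B(S)$, so I would analyze its orbits on $B(S)\setminus PF(S)$: the parity of $d(S)$ is governed by the orbits $\{x,F-x\}$ that are \emph{unbalanced} (exactly one of the two points is a pseudo-Frobenius number), together with the fixed point $x=F/2$ when $F$ is even.

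First I would establish that the same-parity side is never empty: the ordinary semigroup $\{0,F+1\rightarrow\}$ is almost symmetric with $t=F$, so $\{S:F(S)=F,\ d(S)=0\}\neq\emptyset$. It then suffices to produce an injection
\[
\Psi\colon\{S:F(S)=F,\ d(S)\text{ odd}\}\hookrightarrow\{S:F(S)=F,\ d(S)\text{ even},\ d(S)\geq 2\},
\]
for then the odd-defect count is at most the even-defect-$\geq 2$ count, and the nonempty defect-zero stratum supplies the strict surplus. The natural candidate for $\Psi$ is to toggle a canonically chosen element of $B(S)\setminus PF(S)$ --- for instance the largest such $x$, or the unbalanced $\sigma$-orbit selected by a fixed rule --- in or out of $S$, so as to change $d(S)$ by exactly one while keeping $F$ fixed; the map $S\mapsto T(S)$ of Section \ref{T-Set}, which is injective for fixed $F$, can then be used to verify injectivity of the resulting assignment at the level of $T$-sets.

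The main obstacle is precisely this local modification: adding or deleting a single element of $[1,F-1]$ must (i) preserve closure under addition, so that the output is again a numerical semigroup, (ii) leave the Frobenius number equal to $F$, and (iii) alter $|B(S)\setminus PF(S)|$ by an odd amount in a controlled, reversible way. The difficulty is that changing the membership of one element can cascade through the pseudo-Frobenius set and through other $\sigma$-orbits, so no naive ``flip the largest extra hole'' rule is simultaneously well defined, parity-changing, and injective for every $F$. I expect that carrying this out will require a finer invariant that linearly orders the unbalanced orbits together with a compatibility argument showing the toggle respects it; the bijections of Theorems \ref{F- alpha} and \ref{g- alpha} already settle the large-type strata (small $\alpha$), so the genuinely hard part is a uniform construction covering the small- and middle-type semigroups across all Frobenius numbers.
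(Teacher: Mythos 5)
You should first be clear that this statement is a \emph{conjecture} in the paper, not a theorem: the paper offers only numerical evidence (Table \ref{tab:parity of t given F}, covering $F\leq 30$) and no proof, so there is no argument of the paper's to measure yours against; the question is simply whether your proposal closes the problem, and it does not. The reformulation you give is correct and genuinely useful: since $2g(S)$ is even, $t(S)\equiv F\ (mod\;2)$ holds exactly when the defect $d(S)=2g(S)-F-t(S)$ is even, and your identity $d(S)=|B(S)\setminus PF(S)|$ checks out, because $PF(S)\setminus\{F\}\subseteq B(S)$ (if $x\in PF(S)$, $x\neq F$, and $F-x$ were a nonzero element of $S$, then $F=x+(F-x)\in S$, a contradiction) and then $|B(S)|-(t(S)-1)=(2g(S)-F-1)-(t(S)-1)=d(S)$. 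Your overall logic is also sound: an injection from the odd-defect semigroups into the even-defect semigroups with $d\geq 2$, together with the observation that the defect-zero stratum is nonempty (the ordinary semigroup), would yield the strict inequality.

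The gap is that the injection $\Psi$ is never constructed, and the candidate you propose is not merely hard to control --- it is ill-founded as stated. You suggest toggling a canonically chosen $x\in B(S)\setminus PF(S)$ ``in or out of $S$.'' But every such $x$ lies outside $S$, so there is nothing to toggle out; and toggling it in never produces a numerical semigroup: $x\notin PF(S)$ means by definition that there is a nonzero $s\in S$ with $x+s\notin S$, and since $x+s\neq x$, the set $S\cup\{x\}$ fails closure under addition. (The only gaps one can adjoin while preserving closure are pseudo-Frobenius numbers $x$ with $2x\in S$, which by construction are never in $B(S)\setminus PF(S)$.) So any genuine construction must modify several elements simultaneously, which reintroduces exactly the cascading effect on $PF(S)$ and on the $\sigma$-orbits that you acknowledge as the obstacle; working at the level of $T$-sets does not escape this, since a numerical set $T$ arises from a semigroup with Frobenius number $F$ only if $(\mathbb{N}\setminus(F-T))\cup\{0\}$ is closed under addition, the same constraint in disguise. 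In short: a correct and potentially fruitful reduction of the conjecture to a sign-reversing-involution problem, but the central combinatorial construction is missing, and the statement remains open, exactly as the paper leaves it.
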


\begin{conjecture}
$$\lim_{F\to\infty}\frac{\#\{S\mid F(S)=F, t(S)\equiv F(mod\;2)\}}{\#\{S\mid F(S)=F\}} 
=\frac{1}{2}.$$
\end{conjecture}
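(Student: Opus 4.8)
The plan is to recast the statement as the relative vanishing of a signed count and then attack that count via a parity-toggling involution on $T$-sets. First I would translate the parity condition into the language of $T$-sets. By Corollary \ref{genus of T of S} and Lemma \ref{f,g of T(S)}, the almost-symmetry defect
$$d(S):=2g(S)-F(S)-t(S)=g(A(T(S)))-g(T(S))$$
is a nonnegative integer, and since $2g(S)$ is even we get $t(S)\equiv F(S)+d(S)\pmod 2$. Hence $t(S)\equiv F(S)\pmod 2$ exactly when $d(S)$ is even. Writing $N(F)=\#\{S\mid F(S)=F\}$ and letting $N_{\mathrm{eq}}(F)$, $N_{\mathrm{ne}}(F)$ denote the number of such $S$ with $t(S)\equiv F$ and with $t(S)\not\equiv F\pmod 2$ respectively, we have $N_{\mathrm{eq}}(F)+N_{\mathrm{ne}}(F)=N(F)$ and
$$N_{\mathrm{eq}}(F)-N_{\mathrm{ne}}(F)=\sum_{F(S)=F}(-1)^{d(S)}.$$
Thus the target limit equals $\tfrac12$ if and only if $\sum_{F(S)=F}(-1)^{d(S)}=o\big(N(F)\big)$ as $F\to\infty$. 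This refines Conjecture \ref{same parity}, which asserts that the same signed sum is strictly positive.

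Next I would give a combinatorial handle on $d(S)$. Applying Theorem \ref{TBUS} to $T=T(S)$, whose associated semigroup is $A(T(S))=S\cup PF(S)$, one gets $S\cup PF(S)\subseteq T(S)\subseteq (S\cup PF(S))\cup B(S\cup PF(S))$, and since $B(U)$ is disjoint from $U$ this yields
$$d(S)=g(A(T(S)))-g(T(S))=\big|\,T(S)\cap B(S\cup PF(S))\,\big|.$$
So the parity of $d(S)$ is the parity of the number of elements of the symmetric ``middle'' set $B(S\cup PF(S))$ that $T(S)$ chooses to include. The heart of the argument would be to build an involution $\iota$ on $\{S\mid F(S)=F\}$ that adds or deletes exactly one such element, thereby changing $d(S)$ by $\pm1$ and flipping its parity, defined on all but an exceptional subset $E(F)$. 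Any $S$ paired by $\iota$ contributes $(-1)^{d(S)}+(-1)^{d(\iota(S))}=0$ to the signed sum, so $\big|\sum_{F(S)=F}(-1)^{d(S)}\big|\le |E(F)|$, and the conjecture follows once $|E(F)|=o(N(F))$.

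The main obstacle is twofold. The first difficulty is that toggling one element of $B(S\cup PF(S))$ inside $T(S)$ must be carried out while preserving the associated semigroup $A(T(S))=S\cup PF(S)$, equivalently while keeping $S$ a numerical semigroup with the same Frobenius number; the closure requirement $A(T)=S\cup PF(S)$ is exactly what forbids an arbitrary toggle, so isolating a canonical ``safe'' element to toggle, and describing the semigroups admitting no such element, is where the combinatorics concentrates. The second difficulty is quantitative: one must show $E(F)$ is exponentially thinner than $N(F)$. For this I would encode numerical semigroups of Frobenius number $F$ by a transfer operator (reading membership along $[1,F]$, in the spirit of the Backelin-type counts, so that the unweighted spectral radius governs $N(F)$) and prove that the sign-weighted operator tracking $(-1)^{d(S)}$ has strictly smaller spectral radius than the unweighted one. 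This strict spectral gap is the \emph{crux}, and is precisely what makes the statement a conjecture rather than a theorem; an alternative to the involution in the final step is to bound $\sum_{F(S)=F}(-1)^{d(S)}$ directly through this signed operator, but either route reduces to the same spectral-gap phenomenon.
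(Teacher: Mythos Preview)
The statement you are attempting is a \emph{conjecture} in the paper, not a theorem: the paper offers no proof, only the numerical evidence in Table~\ref{tab:parity of t given F}. So there is no paper proof to compare against, and your proposal should be read as a proof \emph{strategy}, not a proof.

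As a strategy, your reformulation is sound. The identity $d(S)=g(A(T(S)))-g(T(S))$ and the parity reduction $t(S)\equiv F(S)\pmod 2 \iff d(S)\equiv 0\pmod 2$ are correct, and the rewriting of the target as $\sum_{F(S)=F}(-1)^{d(S)}=o(N(F))$ is a clean restatement. The description of $d(S)$ as $|T(S)\cap B(A^\star(S))|$ via Theorem~\ref{TBUS} is also correct. However, the two steps you yourself flag as ``the crux'' are genuine gaps, not technicalities. First, the involution is not constructed: toggling one element of $T(S)$ forces $S'$ to differ from $S$ by a single element, and you simultaneously require $S'\cup PF(S')=S\cup PF(S)$; you give no candidate for the ``safe'' element, no argument that such $S'$ is again a semigroup, and no description of the exceptional set $E(F)$. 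Second, even granting the involution, you still need $|E(F)|=o(N(F))$, or alternatively a strict spectral gap for the sign-weighted transfer operator; neither is established, and you explicitly say so. In short, what you have written is an honest outline of where the difficulty lies, but it does not advance beyond the paper's own position that the statement is open.
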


The data in Table \ref{tab:parity of t given F} has been used to plot the ratio of numerical semigroups in which $t(S)\equiv F(mod\;2)$ among all numerical semigroups with Frobenius number $F$.

\includegraphics[width=0.95\textwidth]{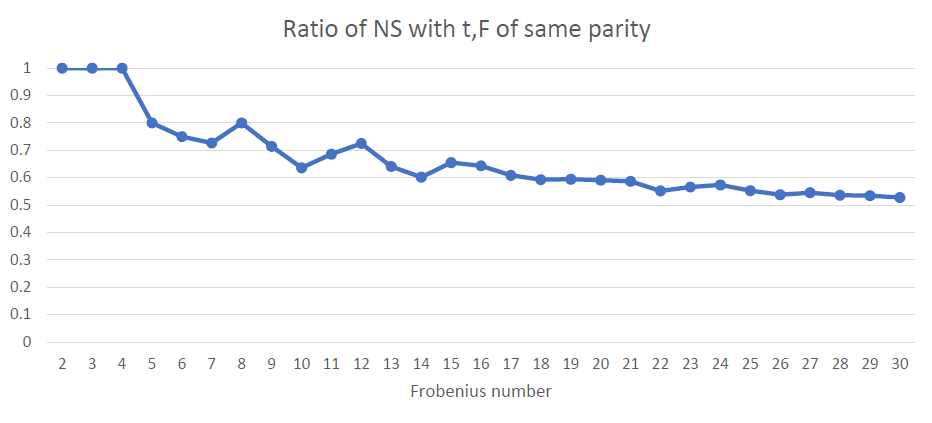}

\begin{table}[h]
    \centering
    \begin{tabular}{|c|c|c|c|c|c|c|c|c|}
    \hline
    F    & Odd t & Even t&F &Odd t &Even t &F &Odd t &Even t\\
    \hline
    2     & 0  & 1  &
        12    &  11 & 29
        &22& 857  &1056\\
    3     & 2  &0  &
        13    &   68& 38
        &23&2320 &1776\\
    4     & 0  & 2 &
        14    &  41 & 62
        &24& 1524  &2054\\
    5     & 4  &1  &
        15    & 131 &69
        &25&4573 &3700\\
    6     &  1 &3 &
        16    &  73 & 132
        &26& 3779  &4396\\
    7     & 8  &3 
        &17& 283  &182
        &27&8803  &7329\\
    8     &  2 &8 
        &18& 165  & 240
        &28&7547 &8720\\
    9     & 15 &6  
        &19& 571  &390
        &29& 18656 & 16247\\
    10    &  8 &  14 
        &20& 368  &532
        &30& 15023 & 16799\\
    11    &  35&16 
        &21& 1073&755
        &&&\\
    \hline
    \end{tabular}
    \caption{Number of numerical semigroups with odd, even type. Counted by Frobenius number.}
    \label{tab:parity of t given F}
\end{table}

Further we conjecture the following about the growth of $T(F,\alpha)$.

\begin{conjecture}
For any $\alpha\geq 2$ and $r\in \{0,1\}$ the following limits exist
$$\lim_{F\equiv r(mod\;2), F\to\infty} \frac{log_{2}(T(F,\alpha))}{F}.$$
\end{conjecture}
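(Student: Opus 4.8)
The plan is to deduce existence of the limit from Fekete's super-additive lemma, so the whole problem reduces to establishing an \emph{approximate super-multiplicativity} of $T(\cdot,\alpha)$ inside each residue class. Fix $r\in\{0,1\}$ and a base Frobenius number $F_0\equiv r\pmod 2$, and write $F=F_0+2n$. If, for all large $F_1,F_2\equiv r\pmod 2$, one can produce an injection
$$\{\,S:F(S)=F_1,\ t(S)=\alpha\,\}\times\{\,S:F(S)=F_2,\ t(S)=\alpha\,\}\hookrightarrow\{\,S:F(S)=F_1+F_2-F_0,\ t(S)=\alpha\,\},$$
then, writing $F_1=F_0+2m$, $F_2=F_0+2n$ so that the target Frobenius number is $F_0+2(m+n)$, the sequence $a_n:=\log_2 T(F_0+2n,\alpha)$ satisfies $a_{m+n}\ge a_m+a_n$; the fixed shift by $F_0$ keeps us in class $r$ because $F_1+F_2-F_0\equiv r\pmod2$. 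The crude bound $T(F,\alpha)\le 2^{F-1}$, valid since such an $S$ is determined by the subset $S\cap[1,F-1]$ of an $(F-1)$-element set, forces $a_n\le Cn$, so Fekete's lemma yields $\lim_n a_n/n=\sup_n a_n/n<\infty$, which is exactly the asserted $\lim_{F\equiv r}\tfrac{\log_2 T(F,\alpha)}{F}$.

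First I would build the gluing from the structural picture of Theorem \ref{construction type t}: a type-$\alpha$ semigroup in that family splits into a \emph{rigid cap} in a window below $F$ that pins down the $\alpha$ pseudo-Frobenius numbers, and a \emph{free bulk} near the middle (the set $A$) carrying all the entropy. The construction would keep a single rigid cap sized for the target $F_1+F_2-F_0$ and concatenate the two free bulks, rescaling $S_2$'s bulk so its elements sit above $S_1$'s. Since within this family the cap is determined by $F$ and $\alpha$ alone, merging the caps loses no information, and the map $A_1\times A_2\mapsto A_1\sqcup A_2$ (recording the breakpoint between the two bulks for injectivity) is injective.

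The hard part will be making the gluing work for \emph{arbitrary} type-$\alpha$ semigroups rather than just the explicit family. A general type-$\alpha$ semigroup can have its pseudo-Frobenius numbers spread out, and concatenating two such semigroups typically creates spurious pseudo-Frobenius numbers at the seam or annihilates existing ones, moving the type off $\alpha$. Showing that after the shift the combined gap set has \emph{exactly} $\alpha$ pseudo-Frobenius numbers is the delicate point. I expect the cleanest route is to pass through the $T$-set correspondence of Section \ref{T-Set}: by Corollary \ref{genus of T of S} the constraint $t(S)=\alpha$ is equivalent to fixing $g(A(T(S)))=g(S)-\alpha$ on the numerical set $T(S)$, and concatenation of semigroups should correspond to a more transparent operation on $T$-sets, where the invariant $g(A(T))$ that governs the type is easier to track across the seam.

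Finally I would handle the parity bookkeeping. The constructions show that the parity of $t(S)$ is tied to that of $F(S)$, which is precisely why the growth exponent should depend on $r\pmod2$; the gluing above stays within class $r$ by design, so the limits for $r=0$ and $r=1$ are produced separately, as the conjecture asserts. No independent matching upper bound is needed, since Fekete extracts the limit from super-multiplicativity alone together with the trivial linear upper bound; the entire remaining content is the type-preserving combinatorics of the seam.
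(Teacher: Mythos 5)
First, note that this statement is not proved in the paper at all: it appears in Section \ref{Conjecture} as a conjecture supported only by numerical data, so there is no proof to compare yours against, and your argument must stand on its own. It does not: it is a plan whose central step is missing. Everything reduces, as you yourself say, to constructing a type-preserving injection from pairs of type-$\alpha$ semigroups with Frobenius numbers $F_1,F_2$ into type-$\alpha$ semigroups with Frobenius number $F_1+F_2-F_0$; Fekete's lemma and the trivial bound $T(F,\alpha)\le 2^{F-1}$ are the easy part. But you never construct this injection. For the explicit family of Theorem \ref{construction type t} you sketch a cap-and-bulk concatenation, yet that family is only a lower-bound construction (it is how the paper gets $T(F,\alpha)>2^{(0.1-\epsilon)F}$) and says nothing about the generic type-$\alpha$ semigroup; for arbitrary semigroups you explicitly concede that the seam creates and destroys pseudo-Frobenius numbers and that you do not know how to keep the type exactly $\alpha$. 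That concession is the entire content of the problem: an argument that controls the type only for an explicit subfamily establishes nothing about the limit of $\log_2 T(F,\alpha)/F$. The suggested detour through $T$-sets does not resolve this either, since fixing $g(A(T))$ under a concatenation of numerical sets is exactly as delicate as fixing the type under a concatenation of semigroups.

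Second, there is reason to doubt that the exact supermultiplicativity you posit is even true, so this is a wrong-shaped reduction and not merely an unfinished one. If $a_{m+n}\ge a_m+a_n$ held (with $F=F_0+2n$), Fekete would give $\lim a_n/n=\sup_n a_n/n$, hence $T(F,\alpha)\le 2^{c(F-F_0)}$ for all $F$ in the class, where $c$ is the limiting exponent; that is, $\limsup T(F,\alpha)/2^{cF}\le 2^{-cF_0}<1$, a quantitative constraint on the leading constant that nothing supports. For $\alpha=1$, Backelin's theorem (quoted in the paper) shows $T(F,1)/2^{F/6}$ converges to \emph{different} positive constants along the classes $F\equiv 1,3,5\pmod 6$; this mod-$6$ fine structure already indicates that the count is not governed by a clean gluing within a mod-$2$ class, and there is no reason to expect $\alpha\ge 2$ to behave better. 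A separate small error: your parity bookkeeping asserts that the parity of $t(S)$ is tied to that of $F(S)$, but Table \ref{tab:parity of t given F} of the paper shows that both parities of the type occur for every $F$; that tie holds only inside the constructed families, not in general.
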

When $r=0$ we denote the conjectured limit by $a_{\alpha}$ and when $r=1$ we denote it by $b_{\alpha}$.
Corollary \ref{lower bound type alpha} implies that $a_\alpha, b_\alpha$ are at least $0.1$. Moreover Corollary \ref{AS lower bound} implies that $a_\alpha$ for even $\alpha$ and $b_\alpha$ for odd $\alpha$ are at least $\frac{1}{6}$.

\begin{conjecture}
The two sequences $a_{\alpha}$ and $b_{\alpha}$ are both strictly increasing sequences.
\end{conjecture}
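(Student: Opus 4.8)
The plan is to prove the two chains $a_\alpha<a_{\alpha+1}$ and $b_\alpha<b_{\alpha+1}$ separately, fixing the parity of $F$ throughout so that we only ever compare $T(F,\alpha)$ with $T(F,\alpha+1)$ for $F$ in one residue class mod $2$. I would split each strict inequality into a \emph{weak monotonicity} step ($a_\alpha\le a_{\alpha+1}$) and a \emph{strictness} step, since these demand very different tools, and I expect the honest outcome of such a plan to be that the first step is provable while the second is the genuine obstruction.

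For weak monotonicity I would build, for fixed parity of $F$, an injection from the numerical semigroups of Frobenius number $F$ and type $\alpha$ into those of Frobenius number $F$ and type $\alpha+1$. The natural candidate is deletion of the multiplicity: if $m=m(S)<F$ then $S\setminus\{m\}$ is again a numerical semigroup (the smallest possible sum $m+m$ exceeds $m$, so closure is preserved), its largest gap is still $F$, and $m$ becomes a pseudo-Frobenius number because $m+s'\in S\setminus\{m\}$ for every nonzero $s'\in S\setminus\{m\}$. Each old pseudo-Frobenius number $P$ survives: the only way it could fail is if $m-P\in S$ for some $P<m$, but then $m-P\in\{1,\dots,m-1\}$ is forced to be a gap, so no survivor is lost. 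The one point to control is that extra pseudo-Frobenius numbers can be created, namely gaps $x$ with $x+m\notin S$ yet $x+s\in S$ for all larger $s\in S$; I would show such configurations occur for only an exponentially negligible family, so that deletion raises the type by exactly one on a set of full exponential order and yields $T(F,\alpha+1)\ge T(F,\alpha)-o_{\exp}(T(F,\alpha))$, hence $a_\alpha\le a_{\alpha+1}$ and $b_\alpha\le b_{\alpha+1}$.

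For strictness I would try to sharpen the explicit family of Theorem~\ref{construction type t}, where the entropy comes from an arbitrary subset $A\subseteq(\beta F,\tfrac F2)$ contributing $2^{(\frac12-\beta)F}$ semigroups, and the cut-off is forced to satisfy $\beta>\tfrac25$ by a single closure argument (the auxiliary element $s\approx(4\beta-1)F$ must land in the solid top block, which needs $4\beta-1>1-\beta$). The goal is to spend the additional pseudo-Frobenius numbers available at type $\alpha+1$ to relax precisely this constraint — by inserting extra solid bands or extra carved gaps that allow $s$ to be placed for a smaller cut-off $\beta_\alpha<\tfrac25$, strictly decreasing in $\alpha$ — so that type $\alpha$ attains free density $\tfrac12-\beta_\alpha$ with $\beta_\alpha$ strictly decreasing. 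Verifying closure, computing the exact Frobenius number, and pinning the exact type of such a multi-band family is the routine but technical part, analogous to the proofs of Corollaries~\ref{AS lower bound} and~\ref{lower bound type alpha}.

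The real obstacle is that improved \emph{lower} bounds cannot by themselves prove strict monotonicity of the true rates. Since a semigroup of Frobenius number $F$ is determined by $S\cap[1,F-1]$ we have $T(F,\alpha)\le 2^{F-1}$, so $a_\alpha,b_\alpha\le 1$ are bounded and the increments $a_{\alpha+1}-a_\alpha$ must tend to $0$; in particular no uniform ``attach a macroscopic type-one gadget'' argument can work for all $\alpha$, because by the asymptotics for type-one semigroups \cite{Backelin} that would add a fixed positive amount to the exponent at every step. Consequently a construction only gives $a_\alpha\ge\tfrac12-\beta_\alpha$ and $a_{\alpha+1}\ge\tfrac12-\beta_{\alpha+1}$, which does not compare $a_\alpha$ with $a_{\alpha+1}$ unless it is matched by an upper bound $a_\alpha\le\tfrac12-\beta_\alpha$. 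I expect the crux of any complete proof to be exactly this matching upper bound — showing that a type-$\alpha$ semigroup of Frobenius number $F$ cannot carry free entropy beyond $(\tfrac12-\beta_\alpha)F$ — and the most promising leverage is the $T$-set dictionary of Sections~\ref{T-Set}--\ref{Sec: T set of numerical set}, which translates type, genus and the almost-symmetric defect into the combinatorics of the numerical set $T(S)$; absent such an upper bound the statement should remain a conjecture.
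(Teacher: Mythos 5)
The statement you were asked to prove is one of the paper's \emph{conjectures}: the author supports it only with numerical data (the tables and the plots of $\log_2(T(F,\alpha))/F$) and offers no proof, so there is nothing in the paper to compare your argument against. Your proposal, by its own concluding sentence, is also not a proof --- it is a research plan that ends by conceding that ``absent such an upper bound the statement should remain a conjecture.'' That concession is accurate, and the structural point you make is sound: since $T(F,\alpha)\leq 2^{F-1}$, the exponents $a_\alpha, b_\alpha$ are bounded, so if they increase their increments must tend to $0$, and hence no construction that appends a fixed-size ``gadget'' (adding a constant to the exponent) can prove strictness for all $\alpha$; lower-bound families like the one in Theorem \ref{construction type t} can only ever be half of an argument, and the missing half is a matching upper bound, which nobody has.

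Beyond that, the one step you present as essentially complete --- weak monotonicity via deletion of the multiplicity --- has concrete gaps. First, the map $S\mapsto S\setminus\{m(S)\}$ is not injective on semigroups with Frobenius number $F$: both $S_1=\{0,3\}\cup\{6,7,8,\dots\}$ and $S_2=\{0,4\}\cup\{6,7,8,\dots\}$ have Frobenius number $5$ and multiplicity below it, and both map to $\{0\}\cup\{6,7,8,\dots\}$. (Here $t(S_1)=2$ and $t(S_2)=3$, so this particular collision does not occur within a single type class, but you never prove injectivity on fixed-type classes, and that is what your counting argument needs.) Second, your treatment of the genuine difficulty --- that deleting $m$ can create \emph{several} new pseudo-Frobenius numbers, namely gaps $x$ with $x+m\notin S$ --- is the unsupported assertion that such configurations are ``exponentially negligible''; that claim is the entire content of the weak-monotonicity step and no argument for it is given. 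Third, even granting both points, you would only obtain $a_\alpha\leq a_{\alpha+1}$ \emph{conditional on the limits existing}, which is itself the immediately preceding conjecture of the paper. So the proposal correctly diagnoses why the statement is hard, but proves neither the weak nor the strict inequality.
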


We plot $\frac{log_{2}(T(F,\alpha))}{F}$ for even and odd $F$ and $\alpha=2,3,4,5$.
The limits do appear to exist.

\includegraphics[width=0.85\textwidth]{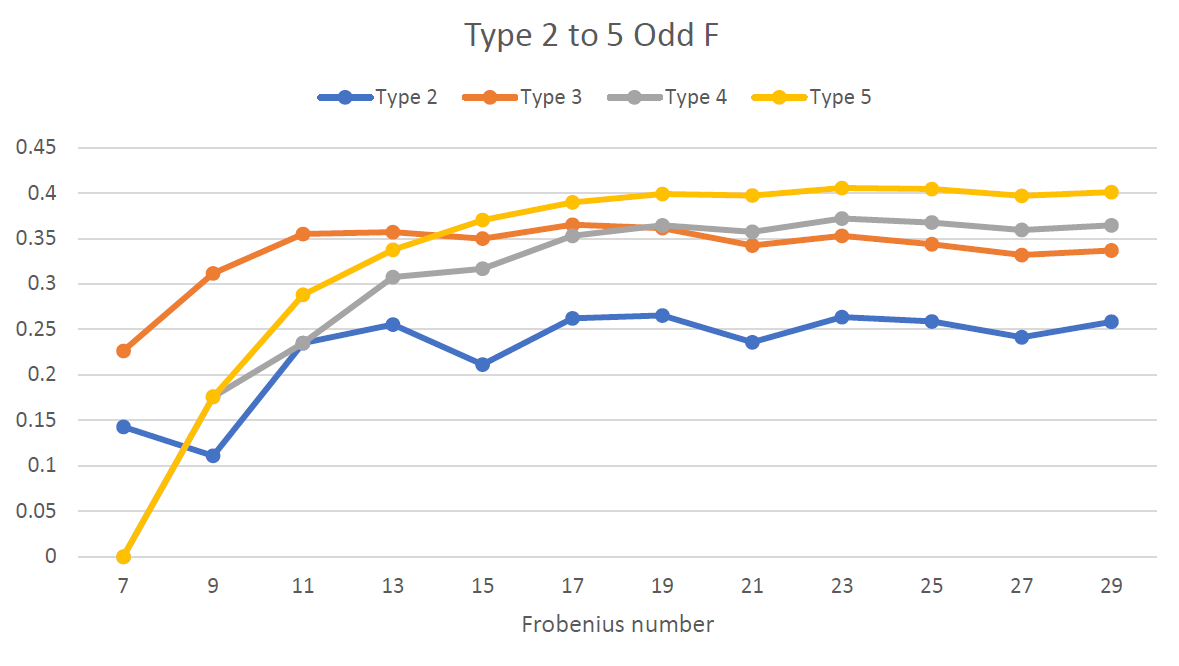}

\includegraphics[width=0.85\textwidth]{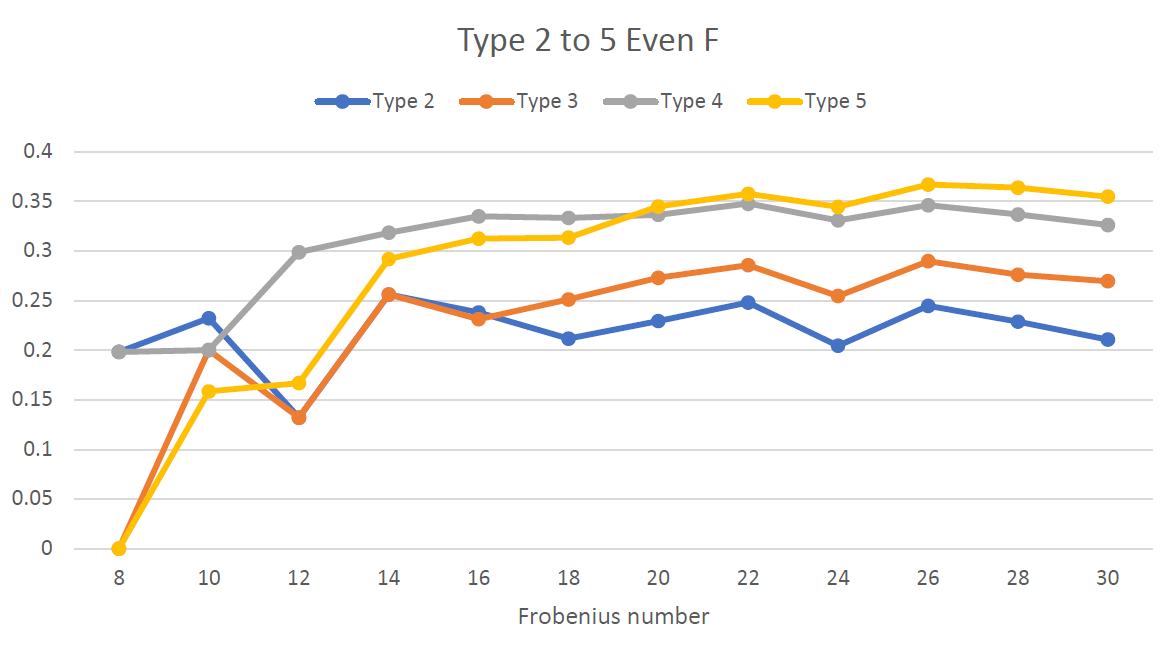}


Similar patterns are observed when we count by genus. However this time numerical semigroups with odd type seem to outnumber the numerical semigroups with even type. Table \ref{tab:parity of t given g} verifies Conjecture \ref{odd t given g} for $g\leq 20$.

\begin{conjecture}\label{odd t given g}
For every $g$ we have
$$\#\{S\mid g(S)=g, t(S)\equiv 1(mod\;2)\}\geq
\#\{S\mid g(S)=g, t(S)\equiv 0 (mod\;2)\}.$$
\end{conjecture}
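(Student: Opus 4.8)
The plan is to prove the inequality by exhibiting, for each $g$, an injection from the set of genus-$g$ numerical semigroups of even type into the set of those of odd type. The natural device for controlling the parity of the type is Corollary \ref{genus of T of S}, which gives $t(S)=g(S)-g(A(T(S)))$. Since $g(S)=g$ is fixed, $t(S)$ is odd precisely when $g(A(T(S)))\not\equiv g\pmod 2$, so the whole problem is converted into a comparison of two classes of numerical sets $T$ in the image of $S\mapsto T(S)$, sorted by the parity of $g(A(T))$. Recall from the proof of Theorem \ref{g- alpha} that $S\mapsto T(S)$ is injective on the semigroups of a fixed genus, so building the desired injection is equivalent to building a parity-reversing injection on this family of $T$-sets that changes $g(A(T))$ by exactly one.

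Second, I would dispose of the ``shallow'' regime $F(S)<2m(S)$ first, since there the correspondence is clean. For such $S$ one has $F(T(S))=F(S)-m(S)<\tfrac12 F(S)$, so Lemma \ref{Construct S} applies and $S$ is recovered from $T(S)$; conversely every numerical set $T$ with $g+g(T)>2F(T)$ reconstructs to a genus-$g$ semigroup via $S=(\mathbb N\setminus(F-T))\cup\{0\}$ with $F=g+g(T)$, the genus being automatically $g$ regardless of $g(T)$. On this family the target is to produce an explicit local move $T\mapsto T'$ that raises $g(A(T))$ by one while keeping the reconstruction constraint $g+g(T')>2F(T')$ intact; adjoining or deleting a single carefully chosen element of $B(A(T))$ (whose size is pinned down by Theorem \ref{TBUS} and Corollary \ref{cor to TBUS}) is the most promising candidate, after which one checks injectivity and that it sends the disfavoured parity class into the favoured one.

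Third, the ``deep'' regime $F(S)\ge 2m(S)$ falls outside the reach of Lemma \ref{Construct S}, and here I would instead argue by induction on the genus along the semigroup tree, passing from a genus-$g$ semigroup $S$ to its parent $S\cup\{F(S)\}$ of genus $g-1$. Corollary \ref{type decresing} gives the exact relation $t(S\cup\{F(S)\})=t(S)+t(T(S))-1$, which is the key tool for transporting the parity statistics one level at a time. The hope is to assemble a fibre-by-fibre count over the children of each genus-$(g-1)$ semigroup and show the odd-type surplus is preserved under the tree operation.

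The step I expect to be the genuine obstacle is exactly this last one: while Corollary \ref{type decresing} controls the magnitude of the type change (the increment is non-negative), it says nothing directly about the parity of $t(T(S))-1$, so the induction does not close on its own. Equivalently, in the $T$-set formulation the difficulty is that modifying $T$ to change $g(A(T))$ by one typically perturbs $A(T)$ in a way that is hard to keep injective and parity-controlled simultaneously, because $A(T)$ can jump by more than one when a single element of $T$ is toggled. Overcoming this will require a finer invariant than the type alone --- plausibly a joint statistic on the pair $(t(S),t(T(S)))$, or a refinement of the tree argument that tracks $t(T(S))\bmod 2$ --- and this is where the main work of a complete proof must go.
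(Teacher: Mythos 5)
You set out to prove Conjecture \ref{odd t given g}, but the paper does not prove this statement: it is presented as an open conjecture, supported only by the computations in Table \ref{tab:parity of t given g}, which verify it for $g\le 20$. So there is no proof to compare your argument against; the only question is whether your outline itself closes the problem, and it does not --- as you yourself acknowledge in your final paragraph. Your reformulation steps are sound: the identity $t(S)=g(S)-g(A(T(S)))$ from Corollary \ref{genus of T of S}, the injectivity of $S\mapsto T(S)$ at fixed genus from the proof of Theorem \ref{g- alpha}, and the reconstruction in the regime $F(S)<2m(S)$ via Lemma \ref{Construct S} are all correct. But each of your two regimes terminates in an unproved claim that carries the full difficulty of the conjecture.

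Concretely, two gaps. In the shallow regime, the ``local move'' $T\mapsto T'$ raising $g(A(T))$ by exactly one is never constructed, and there is no reason it should exist as a single-element toggle: adjoining or deleting one element of $T$ can leave $g(A(T))$ unchanged or move it by an arbitrary amount, and it also changes $g(T)$, hence the Frobenius number $F=g+g(T)$ of the reconstructed semigroup, so both injectivity and the constraint $F>2F(T')$ must be re-established after the move; nothing in Theorem \ref{TBUS} or Corollary \ref{cor to TBUS} controls this, since those results bound which sets $T$ share a given $A(T)$ rather than how $g(A(T))$ responds to perturbations of $T$. In the deep regime, the recursion $t(S\cup\{F(S)\})=t(S)+t(T(S))-1$ of Corollary \ref{type decresing} is parity-blind precisely because $t(T(S))$ is unconstrained modulo $2$, and the fibre-by-fibre count over the children of each genus-$(g-1)$ parent requires knowing the parity distribution of types within every fibre --- which is a refined version of the conjecture, not something the inductive hypothesis at genus $g-1$ supplies. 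There is also a structural mismatch between your two cases: the parent $S\cup\{F(S)\}$ of a ``deep'' semigroup has Frobenius number $F_2(S)$ but unchanged multiplicity $m(S)$, so it can be ``shallow''; the tree induction therefore cannot be confined to the deep regime, and the two halves of your argument cannot be run independently. What you have is a legitimate reformulation of the problem in terms of $T$-sets, after which the conjecture remains open.
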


\begin{conjecture}
$$\lim_{g\to\infty}\frac{\#\{S\mid g(S)=g, t(S)\equiv 1(mod\;2)\}}{\#\{S\mid g(S)=g\}} 
=\frac{1}{2}$$
\end{conjecture}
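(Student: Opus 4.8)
The plan is to attack the final conjecture --- that $\#\{S\mid g(S)=g,\ t(S)\equiv 1\ (\mathrm{mod}\ 2)\}/n_g\to\frac12$, where $n_g=\#\{S\mid g(S)=g\}$ --- through the fine structure of \emph{most} numerical semigroups of genus $g$, reducing the parity question to a single combinatorial statistic that an involution can balance. I would use two external facts about the genus count: it is known that $n_g\sim C\varphi^{g}$ with $\varphi$ the golden ratio, and that the proportion of genus-$g$ semigroups of depth at most $3$ (that is, with $F(S)<3\,m(S)$) tends to $1$. My first step is to discard the negligible families. The semigroups of large type are negligible: for each fixed $\alpha$ the bijection of Theorem \ref{g- alpha} shows that $L(g,g-\alpha)$ is eventually constant in $g$, so $\sum_{\alpha\le A}L(g,g-\alpha)$ stays bounded for fixed $A$; upgrading this to a genuine tail estimate (the count of $S$ with $t(S)\ge \delta g$ is $o(\varphi^{g})$) would let me restrict to the small-type regime. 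I would then restrict to depth $2$, where the analysis is cleanest, and treat depth $3$ by the analogous but longer computation.

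For a depth-$2$ semigroup of multiplicity $m$ one has $S=\{0\}\cup Y\cup\{2m\rightarrow\}$ with $Y\subseteq[m,2m-1]$, $m\in Y$, and $g=2m-1-|Y|$. Every gap in $[m,2m-1]$ is automatically a pseudo-Frobenius number, since adding any nonzero element of $S$ lands in $[2m,\infty)\subseteq S$; there are exactly $g-m+1$ of these. Hence $t(S)=(g-m+1)+\varepsilon(S)$, where $\varepsilon(S)$ counts the pseudo-Frobenius numbers in $[1,m-1]$, namely those $x$ with $x+\bigl(Y\cap[m,2m-1-x]\bigr)\subseteq Y$. Within a fixed pair $(g,m)$ the term $g-m+1$ has fixed parity, so all parity variation is carried by $\varepsilon(S)$, while increasing $m$ by $1$ flips the parity of $g-m+1$. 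The conjecture therefore reduces to showing that, summed over $m$ with the binomial weights $\binom{m-1}{\,2m-2-g\,}$, the parity of $t$ equidistributes.

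The mechanism I would use for the main step is a parity-reversing involution on almost all depth-$2$ semigroups. The idea is to single out, for a generic $Y$, one canonical ``switch'' --- for instance the membership of a specially chosen boundary element near $2m-1$ together with a compensating toggle that preserves $|Y|$, hence $g$ --- and to show it changes $\varepsilon(S)$ by exactly $1$, flipping $t(S)$ modulo $2$, on all but an exceptional set of $Y$. If that exceptional set is $o(\varphi^{g})$, the involution pairs the remaining semigroups into odd/even type classes and forces the ratio to $\tfrac12$. An alternative to an explicit involution is a probabilistic model: treat $Y\setminus\{m\}$ as a uniform random subset of $[m+1,2m-1]$ of the prescribed size and prove that $\varepsilon(S)\bmod 2$ equidistributes as $g\to\infty$ via a second-moment or exponential-sum (over $\mathbb{F}_2$) estimate.

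The hard part is exactly this last step. The type is a \emph{global} invariant: while the large pseudo-Frobenius numbers contribute the explicitly parity-determined count $g-m+1$, the small contribution $\varepsilon(S)$ depends on delicate additive relations inside $Y$ and is not visibly balanced modulo $2$, so constructing an involution that flips its parity uniformly, or proving equidistribution in the random model, is where the real difficulty lies. Compounding this, the two reductions --- negligibility of large type and of depth $\ge 4$ --- must be made quantitative at the $o(\varphi^{g})$ level, which needs asymptotic control of $n_g$ going well beyond the eventual-constancy results established here. I therefore expect a complete proof to require substantial input from the analytic theory of the genus count, and I would regard the balancing of $\varepsilon(S)\bmod 2$ as the central obstacle.
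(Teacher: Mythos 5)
This statement is one of the paper's \emph{conjectures}: the author offers no proof at all, only the numerical evidence of Table \ref{tab:parity of t given g} and the accompanying plot. So there is no argument in the paper to compare yours against, and the honest question is whether your proposal closes the gap. It does not, and you say as much yourself. Your structural reduction is sound: for a depth-$2$ semigroup $S=\{0\}\cup Y\cup\{2m\rightarrow\}$ every gap in $[m,2m-1]$ is indeed a pseudo-Frobenius number (adding any nonzero element of $S$ lands at or beyond $2m$), there are exactly $g-m+1$ of them, and so $t(S)=(g-m+1)+\varepsilon(S)$ with $\varepsilon(S)$ the count of pseudo-Frobenius numbers below $m$. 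Likewise, invoking the known asymptotics $n_g\sim C\varphi^g$ and the density-one property of depth at most $3$ is legitimate citable input. But the entire content of the conjecture now sits inside the one step you leave open: you never define the ``canonical switch,'' never prove that any toggle of $Y$ changes $\varepsilon(S)$ by exactly $\pm 1$, and never set up the exponential-sum alternative beyond naming it. Since $\varepsilon(S)$ is governed by global additive relations inside $Y$, a local toggle can change it by an arbitrary amount, so the existence of such an involution is not a technicality --- it is a restatement of the parity-equidistribution problem itself.

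Two further points. First, your preliminary discard of ``large type'' semigroups is both insufficient as stated and unnecessary: eventual constancy of $L(g,g-\alpha)$ for each \emph{fixed} $\alpha$ (Theorem \ref{g- alpha}) gives a bounded count for $\alpha\le A$, but says nothing about $t(S)\ge\delta g$, where $\alpha$ is of order $g$; fortunately your depth-$2$ analysis never uses a type restriction, so you can simply drop this step. Second, note that your reduction, if completed, would be genuinely new mathematics going well beyond the paper: the paper's bijective results (Theorems \ref{F- alpha} and \ref{g- alpha}) concern the extreme regimes $t=F-\alpha$ and $t=g-\alpha$ and have no bearing on typical semigroups, which is exactly why the parity questions in Section \ref{Conjecture} remain open. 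As it stands, your proposal is a reasonable research program with a correctly identified central obstacle, not a proof.
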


We plot the ratio of numerical semigroups with odd type among numerical semigroups with genus $g$ based on Table \ref{tab:parity of t given g}.

\includegraphics[width=0.86\textwidth]{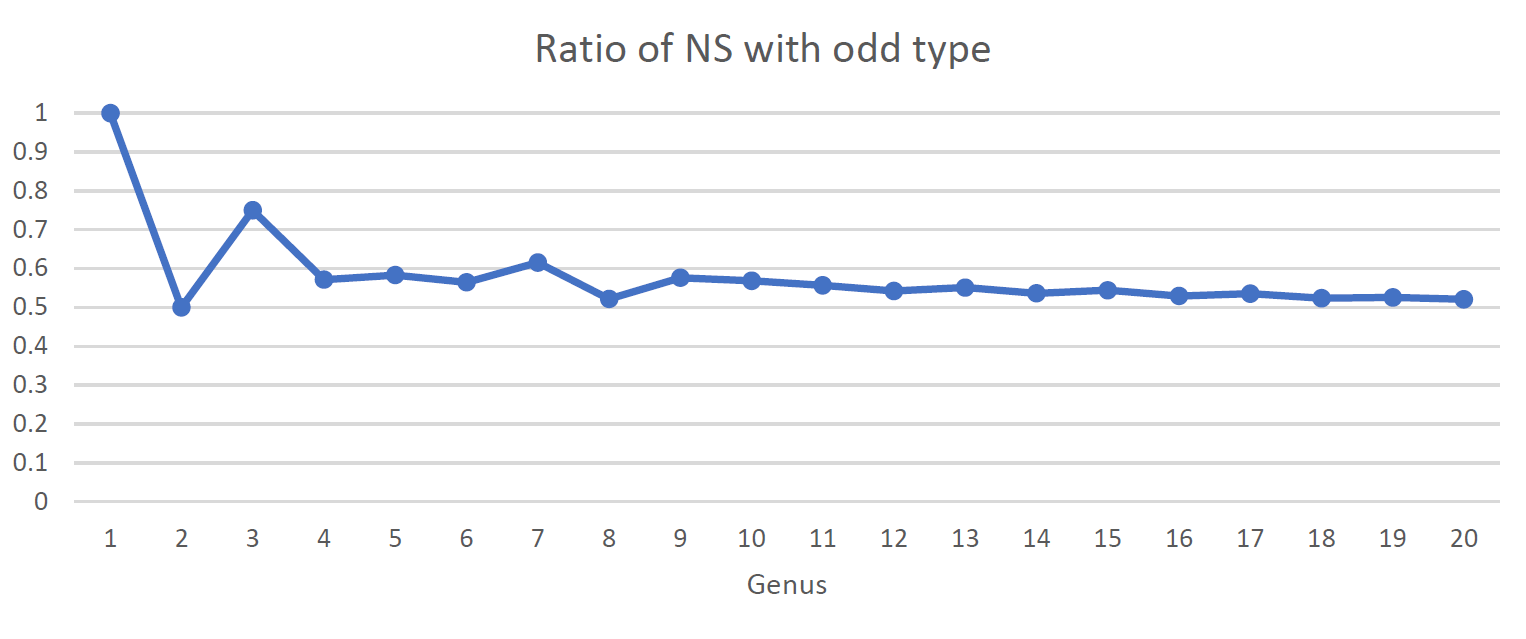}

\begin{table}[h]
    \centering
    \begin{tabular}{|c|c|c|c|c|c|c|c|c|}
         \hline
         g& Odd t & Even t & g& Odd t & Even t & g& Odd t & Even t \\
         \hline
        1 &1 &0 
            &8 &35 &32 
            &15 &1555 &1302 \\
        2 &1 &1 
            &9 &68 &50 
            &16 &2548 &2258 \\
        3 &3 &1 
            &10 &116 &88 
            &17 &4307 &3738 \\
        4 &4 &3 
            &11 &191 &152 
            &18 &7060 &6407 \\
        5 &7 &5 
            &12 &321 &271 
            &19 &11804 &10660 \\
        6 &13 &10 
            &13 &552 &449 
            &20 &19464 &17932 \\
        7 &24 &15 
            &14 &908 &785 
            & & & \\
        \hline
    \end{tabular}
    \caption{Number of numerical semigroups with odd, even type. Counted by genus.}
    \label{tab:parity of t given g}
\end{table}

\begin{conjecture}
For any $\alpha$ the following limits exist
$$\lim_{g\to\infty} \frac{log_{\phi}(L(g,\alpha))}{g}$$
\end{conjecture}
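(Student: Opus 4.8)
The growth base here is the golden ratio $\phi=\frac{1+\sqrt{5}}{2}$, which is precisely the (conjecturally, and for the total count provably) correct base for the number $n_g$ of all numerical semigroups of genus $g$. So the assertion is that, for each fixed $\alpha$, the exponential growth rate of $L(g,\alpha)$ in $g$ is well defined. The plan is to show that $\liminf_g \frac{1}{g}\log L(g,\alpha)$ and $\limsup_g \frac{1}{g}\log L(g,\alpha)$ coincide; note that this is genuinely weaker than, and so should be more accessible than, the statement that $L(g,\alpha)/\phi^{c_\alpha g}$ converges. The standard device for such growth-rate existence results is Fekete's lemma applied to a super- or sub-multiplicative structure on $g\mapsto L(g,\alpha)$, so the first task is to find such a structure.

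For the lower bound, positivity of $\liminf_g \frac{1}{g}\log L(g,\alpha)$ already follows from the explicit families constructed in this paper: the semigroups of Theorem \ref{construction type t} have type $\alpha$ and Frobenius number $F$ with genus $\Theta(F)$, and fixing the cardinality of the free subset $A$ produces exponentially many of them at a single fixed genus, giving $L(g,\alpha)\geq 2^{cg}$ for some $c>0$. To upgrade positivity to an approximate superadditivity I would attempt a gluing construction: the gluing of numerical semigroups $S_1,S_2$ with coprime parameters $d_1\in S_2$, $d_2\in S_1$ has type $t(S_1)t(S_2)$ and genus $d_1 g(S_1)+d_2 g(S_2)+\frac{(d_1-1)(d_2-1)}{2}$, so gluing a fixed type-$\alpha$ seed with symmetric (type $1$) semigroups preserves type $\alpha$ while increasing the genus. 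This already reveals the essential obstruction to a clean Fekete argument for fixed $\alpha\geq 2$: since type is \emph{multiplicative} under gluing, combining two type-$\alpha$ pieces yields type $\alpha^2$, so one does not obtain $L(g_1+g_2,\alpha)\geq L(g_1,\alpha)L(g_2,\alpha)$; the gluing relations instead couple different type strata, and the cross term $\frac{(d_1-1)(d_2-1)}{2}$ blocks exact additivity of the genus.

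The main obstacle is therefore the matching upper bound, i.e. ruling out oscillation of $\frac{1}{g}\log L(g,\alpha)$. Here I would pass to the Kunz-coordinate polyhedral picture: numerical semigroups of fixed multiplicity $m$ correspond to integer points of the Kunz cone in $\mathbb{R}^{m-1}$, the genus is a fixed linear functional, and the type is read off from which coordinates are maximal in the Apéry poset, so for each $m$ and each admissible maximality pattern the count by genus is an Ehrhart quasi-polynomial. Assembling the associated generating functions over all admissible pairs (multiplicity, pattern) of total type $\alpha$ and locating the dominant singularity would, in principle, produce the growth rate. The difficulty is that $m$ is unbounded as $g\to\infty$ even for fixed $\alpha$ — the staircase semigroups $\{0,m,2m,\dots\}$ of Theorem \ref{Staircase} already have type $m-1$ but arbitrarily large multiplicity when $\alpha$ is replaced by a fixed small value and one varies the surrounding structure — so one must control this infinite family of quasi-polynomials \emph{uniformly} in $m$ and show their singularities admit a single dominant one. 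It is this uniform control, rather than any individual step, where the difficulty concentrates; it is the direct analogue of the work required to establish the corresponding convergence for the full count $n_g$, and an alternative attack through the parallel $T(F,\alpha)$ conjecture together with the genus-to-Frobenius dictionary of Theorem \ref{g- alpha} would run into the same core issue.
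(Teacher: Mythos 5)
The statement you are attempting is one of the paper's \emph{conjectures}: the paper offers no proof of it at all, only numerical evidence (the tables and the plots of $\log_{\phi}(L(g,\alpha))/g$ in Section \ref{Conjecture}), so there is no argument of the author's to compare yours against. Your proposal, by its own admission, is also not a proof --- it is a survey of possible attacks together with an identification of where each one breaks down --- and so it does not establish the statement. That is the genuine gap: at no point do you produce an argument that $\liminf$ and $\limsup$ of $\frac{1}{g}\log_\phi L(g,\alpha)$ coincide; you only explain why the two most natural arguments fail to do so.

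On the individual pieces: your diagnosis of the Fekete route is correct and is a real obstruction --- type is multiplicative under gluing ($t(S)=t(S_1)t(S_2)$, with the genus picking up the cross term $\frac{(d_1-1)(d_2-1)}{2}$), so gluing couples the stratum $\alpha$ to the stratum $\alpha^2$ and no superadditive inequality for $L(\cdot,\alpha)$ results; but identifying an obstruction is not the same as circumventing it, and you offer no repair. The lower-bound discussion is beside the point: positivity of the $\liminf$ has no bearing on \emph{existence} of the limit, and in any case the family of Theorem \ref{construction type t} yields, for each Frobenius number $F$, semigroups of a single genus $g(F)$, so even $L(g,\alpha)\geq 2^{cg}$ for \emph{all} large $g$ would need an interpolation argument you do not give. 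The Kunz-cone/Ehrhart program is a reasonable sketch, and you are right that the core difficulty is that the multiplicity is unbounded for fixed type (though your example is inapt: the staircase semigroups $\{0,m,2m,\dots,nm\rightarrow\}$ of Theorem \ref{Staircase} have type $m-1$, so fixing the type there \emph{fixes} the multiplicity; the correct witness is, e.g., $\langle a,b\rangle$ with $\gcd(a,b)=1$, which is symmetric of type $1$ with arbitrarily large multiplicity). But the uniform control over these infinitely many quasi-polynomials and the location of a dominant singularity is precisely the content of the conjecture, and you leave it untouched. In short: the statement remains open, your proposal is consistent with that, and it should be presented as a discussion of strategy, not as a proof.
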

Here $\phi$ is the golden ratio, we denote the limit by $c_\alpha$.

\begin{conjecture}
The sequence $c_\alpha$ is monotonically increasing.
\end{conjecture}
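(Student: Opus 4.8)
The plan is to deduce monotonicity from a single counting inequality relating $L(g,\alpha)$ to $L(g+1,\alpha+1)$. Since the preceding conjecture (which I may assume) guarantees that $c_\alpha=\lim_{g\to\infty}\log_{\phi}(L(g,\alpha))/g$ exists, it suffices to produce a factor $c(g)$ with $\log_{\phi}(c(g))/g\to 0$ such that $L(g+1,\alpha+1)\geq c(g)\,L(g,\alpha)$ for all large $g$. Indeed, taking $\log_{\phi}$, dividing by $g$ and letting $g\to\infty$ then gives $c_\alpha\leq c_{\alpha+1}$, because $\frac{\log_{\phi}L(g+1,\alpha+1)}{g}=\frac{g+1}{g}\cdot\frac{\log_{\phi}L(g+1,\alpha+1)}{g+1}\to c_{\alpha+1}$. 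Thus the whole problem reduces to manufacturing, out of most type-$\alpha$ semigroups of genus $g$, a distinct type-$(\alpha+1)$ semigroup of genus $g+1$; a constant $c(g)$, i.e. a \emph{constant fraction}, already suffices.

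The natural operation lives on the semigroup tree from the introduction, run downward. If $a$ is an effective generator of $S$ (a minimal generator with $a>F(S)$), then $C:=S\setminus\{a\}$ is a numerical semigroup with $g(C)=g(S)+1$ and $F(C)=a$. A direct computation in the spirit of Corollary \ref{type decresing} shows that $a$ becomes a new pseudo-Frobenius number, no former non-pseudo-Frobenius gap becomes one, and an old $x\in PF(S)$ survives precisely when $a-x\notin S$; hence
$$t(C)=t(S)+1-\bigl|\{x\in PF(S)\mid a-x\in S\}\bigr|.$$
In particular $t(C)=\alpha+1$ exactly when $a$ is \emph{good}, meaning $a-x\in \mathrm{Gap}(S)$ for every $x\in PF(S)$. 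Crucially, the assignment $(S,a)\mapsto C$ is injective, since $a=F(C)$ and $S=C\cup\{F(C)\}$ can both be recovered from $C$. Therefore every type-$\alpha$, genus-$g$ semigroup carrying at least one good effective generator injects into the family counted by $L(g+1,\alpha+1)$.

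The crux is then to show that the type-$\alpha$, genus-$g$ semigroups possessing a good effective generator already grow at the full rate $c_\alpha$ (for which a constant fraction would be enough). The candidates are the smallest effective generators $a$ just above $F(S)$, where $a-F(S)$ is small and so automatically a gap; one must arrange $a-x\in\mathrm{Gap}(S)$ simultaneously for the remaining $x\in PF(S)$, i.e. $a\in\bigcap_{x\in PF(S)}\bigl(x+\mathrm{Gap}(S)\bigr)$. I expect this to be the main obstacle. Two families resist it: the leaves of the tree, which have no effective generator whatsoever (for instance $\{0,3,4,6,\rightarrow\}$), and semigroups whose gap set is too tightly packed for that intersection to contain a generator. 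A complete argument must bound the count of such bad semigroups \emph{within the fixed-type class}, which is delicate because leaves are known to form a positive proportion of all numerical semigroups; the hope is that inside a fixed type the generator-poor semigroups are no longer rate-dominant.

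Finally, even granting $c_\alpha\leq c_{\alpha+1}$, upgrading it to the \emph{strict} inequality asserted in the conjecture is harder still: one must exhibit type-$(\alpha+1)$ semigroups lying outside the image of this construction in a quantity whose exponential growth rate strictly exceeds that of $L(g,\alpha)$. I expect this strictness to be the genuinely difficult part, most plausibly reachable through the transfer-matrix and Kunz-coordinate analysis underlying the asymptotic $n_g\sim C\phi^{g}$ that is responsible for the golden ratio appearing in the very definition of $c_\alpha$, refined so as to track the type statistic.
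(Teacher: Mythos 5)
This statement is one of the paper's open conjectures: the author offers only numerical evidence (the plots of $\log_{\phi}(L(g,\alpha))/g$ for $\alpha=1,\dots,5$), and no proof, so there is no ``paper proof'' to compare against; your proposal must stand on its own, and it does not. To give credit first: your local lemma is correct. If $a$ is a minimal generator of $S$ with $a>F(S)$, then $C=S\setminus\{a\}$ is a numerical semigroup with $g(C)=g(S)+1$, $F(C)=a$, the formula $t(C)=t(S)+1-\bigl|\{x\in PF(S)\mid a-x\in S\}\bigr|$ holds, and $(S,a)\mapsto C$ is injective because $a=F(C)$ and $S=C\cup\{F(C)\}$ are recoverable from $C$. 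This correctly reduces (non-strict) monotonicity of $c_\alpha$, conditional on the existence conjecture, to the counting claim that genus-$g$, type-$\alpha$ semigroups possessing a ``good'' effective generator (one with $a-x\in Gap(S)$ for all $x\in PF(S)$) account for at least a constant fraction, or merely the full exponential rate, of $L(g,\alpha)$.

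That counting claim is the genuine gap, and you flag it yourself without offering any argument. Nothing in the paper's toolkit reaches it: the bijections of Theorems \ref{F- alpha} and \ref{g- alpha} concern type close to $F$ or $g$ (large type), and the explicit constructions of Theorems \ref{construction AS type t} and \ref{construction type t} give lower bounds for fixed type counted by Frobenius number, with growth rate $2^{cF}$, not by genus with rate $\phi^{g}$; neither says anything about the structure of a \emph{typical} genus-$g$ semigroup of fixed type $\alpha$, which is what you need (leaves of the semigroup tree, and semigroups where the intersection $\bigcap_{x\in PF(S)}(x+Gap(S))$ misses every generator, must be shown to be rate-negligible \emph{within the type-$\alpha$ class}). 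Two further remarks. First, your worry about strictness is likely misplaced: the paper deliberately writes ``monotonically increasing'' for $c_\alpha$, in contrast to ``strictly increasing'' for $a_\alpha$ and $b_\alpha$, so the weak inequality $c_\alpha\leq c_{\alpha+1}$ is presumably what is being conjectured; but even that weak version is blocked by the main gap. Second, your reduction assumes the preceding conjecture that the limits $c_\alpha$ exist, which is itself open, so at best you would obtain a conditional implication. What you have is an honest and correct reduction with a plausible construction, not a proof.
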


We plot $\frac{log_{\phi}(L(g,\alpha))}{g}$ for $\alpha=1,2,3,4,5$.
The limits do appear to exist.

\includegraphics[width=0.88\textwidth]{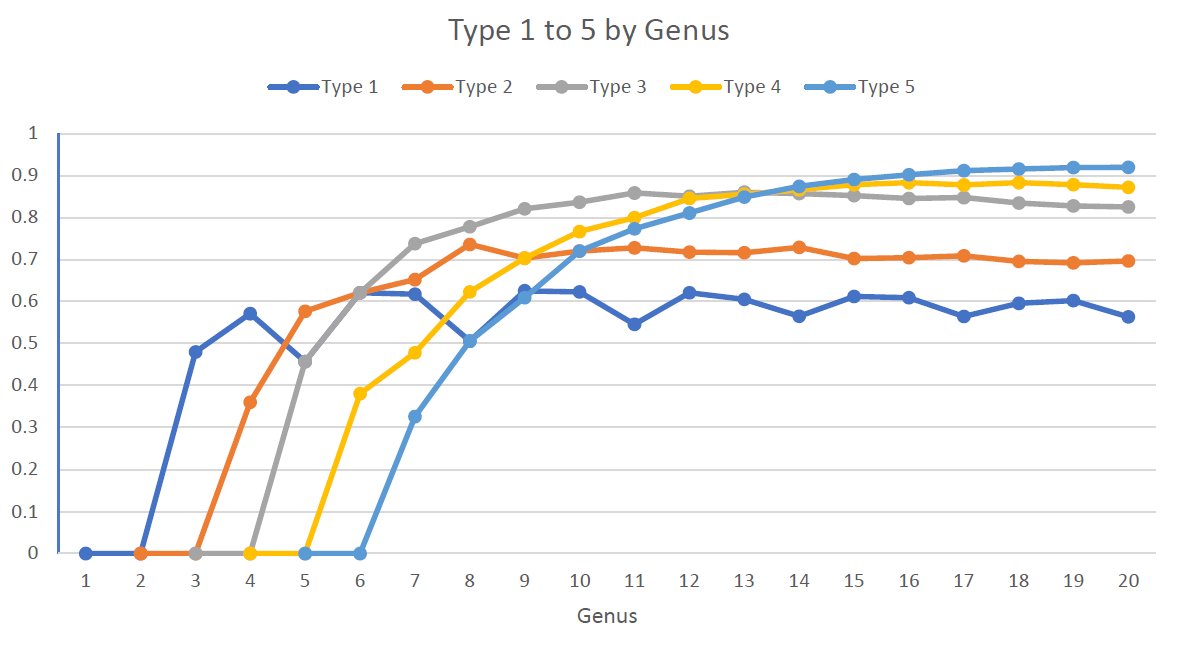}

\end{document}